\def\rOmega{\mathrm{\Omega}}
\def\rGamma{\mathrm{\Gamma}}
\def\rH{\mathrm{H}}
\def\hK{{\widehat K}}
\def\hd{{\hat \dd}}
\def\hdelta{{\hat \updelta}}
\def\bF{\mathbf{F}}
\def\rPi{\mathrm{\Pi}}
\colorlet{linkequation}{blue}
\newcommand*{\SavedEqref}{}
\let\SavedEqref\eqref
\renewcommand*{\eqref}[1]{%
  \begingroup
    \hypersetup{
      linkcolor=blue,
      linkbordercolor=blue,
    }%
    \SavedEqref{#1}%
  \endgroup
}
\def\CT@@do@color{%
  \global\let\CT@do@color\relax
        \@tempdima\wd\z@
        \advance\@tempdima\@tempdimb
        \advance\@tempdima\@tempdimc
        \kern-\@tempdimb
\transparent{0.6}%
        \leaders\vrule
                \hskip\@tempdima\@plus  1fill
        \kern-\@tempdimc
        \hskip-\wd\z@ \@plus -1fill }
\newcommand{\thickhline}{%
    \noalign {\ifnum 0=`}\fi \hrule height 1pt
    \futurelet \reserved@a \@xhline
}
\newcolumntype{"}{@{\hskip\tabcolsep\vrule width 1pt\hskip\tabcolsep}}
\newtheorem{Theorem}{Theorem}[section]
\newtheorem{Lemma}[Theorem]{Lemma}
\newtheorem{Proposition}[Theorem]{Proposition}
\newtheorem{Corollary}[Theorem]{Corollary}
\newtheorem{Definition}[Theorem]{Definition}
\newcommand{\Hom}{{\rm Hom}}
\newcommand{\im}{\mathrm{im}}
\let\SSS\S
\let\oldqed\qed
\renewcommand\qed{\nobreak\enspace\oldqed}
\newcommand{\Ext}{\mathrm{Ext}}
\newcommand{\bp}{\begin{Proposition}}
\newcommand{\ep}{\end{Proposition}}
\newcommand{\bl}{\begin{Lemma}}
\newcommand{\el}{\end{Lemma}}
\newcommand{\bt}{\begin{Theorem}}
\newcommand{\et}{\end{Theorem}}
\newcommand{\End}{\mathrm{End}}
\newcommand{\Aut}{\mathrm{Aut}}
\newcommand{\Mod}{\mathrm{Mod}}
\newcommand{\ev}{\mathrm{ev}}
\newcommand{\eqdef}{\stackrel{{\rm def.}}{=}}
\newcommand{\cinf}{{C^\infty(X)}}
\DeclareFontFamily{U}{rsf}{}
\DeclareFontShape{U}{rsf}{m}{n}{<5> <6> rsfs5 <7> <8> <9> rsfs7 <10-> rsfs10}{}
\DeclareMathAlphabet\Scr{U}{rsf}{m}{n}
\def\cE{\check{E}}
\def\bE{\mathbf{E}}
\def\bhE{\widehat{\mathbf{E}}}
\def\Z{\mathbb{Z}}
\def\C{\mathbb{C}}
\def\H{\mathbb{H}}
\def\deg{{\rm deg\,}}
\def\dd{\mathrm{d}}
\def\fd{\mathfrak{d}}
\def\md{\boldsymbol{\fd}}
\def\rD{\mathrm{D}}
\def\i{\mathbf{i}}
\def\cC{\mathcal{C}}
\def\bhd{\hat{\mathbf{d}}}
\def\uHom{\underline{\Hom}}
\def\Ev{\mathrm{Ev}}
\def\sm{\mathrm{sm}}
\newcommand{\be}{\begin{equation*}}
\newcommand{\ee}{\end{equation*}}
\newcommand{\ben}{\begin{equation}}
\newcommand{\een}{\end{equation}}
\newcommand{\beqa}{\begin{eqnarray*}}
\newcommand{\eeqa}{\end{eqnarray*}}
\newcommand{\beqan}{\begin{eqnarray}}
\newcommand{\eeqan}{\end{eqnarray}}
\newcommand{\CC}{\ensuremath{\mathbb{C}}}
\newcommand \pd {{\partial}}
\newcommand \bpd {{\overline{\partial}}}
\newcommand{\nn}{\nonumber}
\newcommand{\id}{\mathrm{id}}
\newcommand{\Tr}{\mathrm{Tr}}
\newcommand{\tr}{\mathrm{tr}}
\newcommand{\str}{\mathrm{str}}
\def\Gr{\mathrm{Gr}}
\def\cK{\mathrm{\cal K}}
\def\O{\mathrm{O}}
\def\cA{\mathcal{A}}
\def\cE{\mathcal{E}}
\def\cT{\mathcal{T}}
\def\G_2{\mathrm{G_2}}
\def\cO{\mathcal{O}}
\def\cS{\mathcal{S}}
\def\S{\mathbb{S}}
\def\s{\mathfrak{s}}
\def\Ob{\mathrm{Ob}}
\def\VB{\mathrm{VB}}
\def\PV{\mathrm{PV}}
\def\HPV{\mathrm{HPV}}
\def\ioda{\boldsymbol{\iota}}
\def\bbpd{\boldsymbol{\bpd}}
\def\hbbpd{\widehat{\bbpd}}
\def\0{{\hat{0}}}
\def\1{{\hat{1}}}
\def\O{\mathrm{O}}
\def\DF{\mathrm{DF}}
\def\HDF{\mathrm{HDF}}
\def\vect{\mathrm{vect}}
\def\hPV{\widehat{\PV}}
\def\hupdelta{\hat{\updelta}}
\def\hHPV{\widehat{\HPV}\phantom{}}
\def\hF{{\widehat F}}
\def\hbF{{\widehat \bF}}
\def\hOmega{{\widehat \rOmega}}
\def\hcA{{\widehat \cA}}
\def\hHDF{{\widehat \HDF}}
\def\hsigma{{\widetilde \sigma}}
\newcommand{\gr}{\mathrm{gr}\,}
\def\bd{\mathbf{d}}
\def\sq{\mathrm{sq}}
\newcommand{\twopartdef}[4]
{
	\left\{
		\begin{array}{ll}
			#1 & \mbox{if } #2 \\
			#3 & \mbox{if } #4
		\end{array}
	\right.
}
\newenvironment{customthm}[1]
  {\innercustomthm}
  {\endinnercustomthm}
\begin{document}

\title{Non-degeneracy of cohomological traces for general Landau-Ginzburg models}

\author{Dmitry Doryn \and Calin Iuliu Lazaroiu}

\institute{Center for Geometry and Physics, Institute for Basic
  Science (IBS), Pohang 37673, Republic of Korea\\
\email{dmitry@ibs.re.kr, calin@ibs.re.kr}}

\date{}

\maketitle

\abstract{We prove non-degeneracy of the cohomological bulk and
boundary traces for general open-closed Landau-Ginzburg models
associated to a pair $(X,W)$, where $X$ is a non-compact complex
manifold with trivial canonical line bundle and $W$ is a
complex-valued holomorphic function defined on $X$, assuming only that
the critical locus of $W$ is compact (but may not consist of isolated
points). These results can be viewed as certain ``deformed'' versions
of Serre duality. The first amounts to a duality property for
the hypercohomology of the sheaf Koszul complex of $W$, while the second is
equivalent with the statement that a certain power of the shift 
functor is a Serre functor on the even subcategory of
the $\Z_2$-graded category of topological D-branes of such models. }

\tableofcontents 

\section*{Introduction}
\label{sec:intro}

In references \cite{nlg1,nlg2}, we discussed a general framework for
Landau-Ginzburg models inspired by the physics arguments of
\cite{LG1,LG2} and conjectured that certain objects associated to a
K\"ahlerian Landau-Ginzburg pair obey the defining axioms of an
open-closed TFT datum (see \cite{tft}). In the present paper, we
establish non-degeneracy of the cohomological bulk and boundary traces
constructed in \cite{nlg1}, thus proving part of that conjecture.
Since the bulk and boundary traces can be defined without the
K\"ahlerianness requirement, our proof does not assume that condition.

By definition, a {\em holomorphic Landau-Ginzburg pair} (LG pair) is a doublet
$(X,W)$ such that\footnote{A holomorphic LG pair is called {\em K\"ahlerian} 
if $X$ admits at least one K\"ahler metric.}:
\begin{enumerate}
\item $X$ is a non-compact complex manifold (which we assume to be
  connected and paracompact) with holomorphically trivial canonical line bundle.
\item $W:X\rightarrow \C$ is a non-constant holomorphic function.
\end{enumerate}

\

\noindent Let $(X,W)$ be a holomorphic Landau-Ginzburg pair such that
$\dim_\C X=d$. Let:
\ben
\mu\eqdef \hat d \in \Z_2
\een
be the modulo 2 reduction of $d$, which is known as the {\em signature} 
of the LG pair $(X,W)$. Let:
\ben
\label{ZWdef}
Z_W\eqdef \big\{x\in X\,\big|\,(\pd W)(x)=0\big\}
\een
denote the critical set of $W$. Let $\O(X)$ be the commutative ring of
complex-valued holomorphic functions defined on $X$. 

The {\em cohomological bulk algebra} of $(X,W)$ (see \cite[Section 3]{nlg1}) is
the $\Z$-graded associative and supercommutative algebra $\HPV(X,W)$
defined as the total cohomology algebra of the $\O(X)$-linear
dg-algebra $(\PV(X),\updelta_W)$, where the $\O(X)$-module:
\be
\PV(X)\eqdef \bigoplus_{i=0}^d \bigoplus_{j=0}^d \cA^i(X,\wedge^j TX)
\ee
is endowed with the multiplication given by the wedge product and with
the $\Z$-grading which places $\cA^i(X,\wedge^j T X)\eqdef
\rOmega^{0,i}(X,\wedge ^j TX)$ in degree $i-j$. The {\em
twisted differential\,} $\updelta_W$ is defined through:
\ben
\label{updelta}
\updelta_W\eqdef \bpd_{\wedge TX}+\ioda_W~~,
\een
where $\bpd_{\wedge TX}$ is the Dolbeault differential of the
holomorphic vector bundle $\wedge TX\eqdef \oplus_{j=0}^d \wedge^j TX$
and $\ioda_W\eqdef -\i \pd W\lrcorner $, where $\i$ denotes the
imaginary unit.

Suppose that the critical set of $W$ is compact. In this case, it was
shown in \cite{nlg1} that $\HPV(X,W)$ is finite-dimensional over $\C$
and that any holomorphic volume form $\Omega$ on $X$ induces a natural
$\C$-linear map $\Tr_\Omega:\HPV(X,W)\rightarrow \C$ of degree $0$
(known as the {\em bulk cohomological trace}, see \cite[Section
5]{nlg1}) which is graded cyclic in the sense that it satisfies:
\be
\Tr_\Omega(u_1u_2)=(-1)^{\deg u_1\deg u_2}\Tr_\Omega(u_2u_1)
\ee
for any elements $u_1,u_2\in \HPV(X,W)$ which are
homogeneous with respect to the grading on $\HPV(X,W)$. In this paper, we prove:

\

\begin{customthm}{A.}
\label{thm:A}
Suppose that the critical set $Z_W$ is compact. Then the
graded-symmetric pairing
$\langle\cdot,\cdot\rangle_\Omega:\HPV(X,W)\times\HPV(X,W)\rightarrow
\C$ given by:
\be
\langle u_1,u_2\rangle_\Omega\eqdef\Tr_\Omega(u_1 u_2)~,~~\forall u_1,u_2\in \HPV(X,W)~~
\ee
is non-degenerate. Hence $(\HPV(X,W),\Tr_\Omega)$ is a finite-dimensional\,
$\Z$-graded supercommutative Frobenius algebra over $\C$.
\end{customthm}

\

\noindent Let $\cK_W$ denote the sheaf Koszul complex defined by $\ioda_W$:
\ben
\label{cKdef}
(\cK_W):~~~~~~~~~~~0\longrightarrow \wedge^{d}T X\stackrel{\ioda_W}{\longrightarrow} \wedge^{d-1} TX 
\stackrel{\ioda_W}{\longrightarrow} \ldots \stackrel{\ioda_W}{\longrightarrow} TX 
\stackrel{\ioda_W}{\longrightarrow} \cO_X\longrightarrow 0~~.
\een

\noindent When $Z_W$ is compact, we have isomorphisms $\HPV^k(X,W)\simeq \H^k(\cK_W)$, where 
$\H(\cK_W)$ is the hypercohomology of the bounded complex of analytic sheaves $\cK_W$.
Thus Theorem \ref{thm:A} has the equivalent formulation: 

\

\begin{customthm}{A$'$.}
\label{thm:A'}
Suppose that the critical set $Z_W$ is compact. Then for every $k\in \{-d,\ldots, d\}$ we
have isomorphisms of vector spaces: 
\be 
\H^k(\cK_W)\simeq_\C \H^{-k}(\cK_W)^\vee~~, 
\ee
which depend on the choice of a holomorphic volume form $\Omega$.  
\end{customthm}

\

Another fundamental datum defined by the holomorphic LG pair $(X,W)$ is the {\em
  cohomological twisted Dolbeault category of holomorphic
  factorizations} $\HDF(X,W)$ (see \cite[Section 4]{nlg1}). This is a
$\Z_2$-graded $\O(X)$-linear category defined as the total cohomology
category of a $\Z_2$-graded $\O(X)$-linear dg-category $\DF(X,W)$
whose objects are the holomorphic factorizations of $W$. By
definition, a {\em holomorphic factorization} of $W$ is a pair
$(E,D)$, where $E=E^\0\oplus E^\1$ is a $\Z_2$-graded holomorphic
vector bundle defined on $X$ and $D$ is a holomorphic section of the
bundle $End^\1(E)\eqdef Hom(E^\0,E^\1)\oplus Hom(E^\1,E^\0)$ which
satisfies the condition $D^2=W \id_E$. Given two holomorphic
factorizations $a_1=(E_1,D_1)$ and $a_2=(E_2,D_2)$ of $W$, the space
of morphisms from $a_1$ to $a_2$ in the category $\DF(X,W)$ is the
$\O(X)$-module:
\ben
\label{HomDF}
\Hom_{\DF(X,W)}(a_1,a_2)\eqdef \cA(X,Hom(E_1,E_2))=\bigoplus_{i=0}^d\cA^i (X,Hom(E_1,E_2))~~,
\een
endowed with the $\Z_2$-grading which places the submodule
$\oplus_{{\hat i}+\kappa=\tau}\cA^i (X,Hom^\kappa(E_1,E_2))$ (where
$i\in \{1,\ldots, d\}$ and $\kappa\in \Z_2$) in degree $\tau\in
\Z_2$. This module is endowed with the differential:
\ben
\label{tdolbeault}
\updelta_{a_1,a_2}\eqdef \bpd_{Hom(E_1,E_2)} + \fd_{a_1,a_2}~~, 
\een
where $\bpd_{Hom(E_1,E_2)}$ is the Dolbeault differential
of the holomorphic vector bundle $Hom(E_1,E_2)$ and $\fd_{a_1,a_2}$
is the {\em defect differential}, which is uniquely determined by the
condition:
\ben
\label{defect}
\fd_{a_1,a_2}(\omega\otimes f)=(-1)^i \omega \otimes (D_2\circ f) - (-1)^i (-1)^\kappa \omega \otimes (f\circ D_1)~~,
\een 
for all $\omega\in \cA^i(X)$ and $f\in
\rGamma(X,Hom^\kappa(E_1,E_2))$. The composition of morphisms in
$\DF(X,W)$ is induced in the obvious manner by the wedge product and
by the fiberwise composition of linear maps.

Suppose that the critical locus of $W$ is compact. In this case, it
was shown in \cite{nlg1} that $\HDF(X,W)$ is Hom-finite as a
$\C$-linear category and that any holomorphic volume form $\Omega$ on
$X$ naturally induces a $\C$-linear map
$\tr^\Omega_a:\Hom_{\HDF(X,W)}(a,a)\rightarrow \C$ of $\Z_2$-degree
${\hat d}$ (called the {\em cohomological boundary trace}
\cite[Section 6]{nlg1}) for any holomorphic factorization $a$ of $W$,
such that the following graded cyclicity condition is satisfied for
any two holomorphic factorizations $a_1,a_2$ of $W$:
\be
\tr_{a_1}^\Omega(t_2 t_1)=(-1)^{\kappa_1\kappa_2} \tr_{a_2}^\Omega (t_1t_2)~,~
~\forall t_1\in \Hom_{\HDF(X,W)}^{\kappa_1}(a_1,a_2)~,~~\forall t_2\in \Hom_{\HDF(X,W)}^{\kappa_2}(a_2,a_1)~~,
\ee
where $\kappa_1,\kappa_2\in \Z_2$. Let $\tr^\Omega$ denote the family
$(\tr_a^\Omega)_{a\in \Ob\HDF(X,W)}$. The second main result of this paper
 is:

\

\begin{customthm}{B.}
\label{thm:B}
Suppose that the critical set $Z_W$ is compact. Then the bilinear
pairing
\be
\langle\cdot,\cdot\rangle_{a_1,a_2}^\Omega:\Hom_{\HDF(X,W)}(a_1,a_2)\times
\Hom_{\HDF(X,W)}(a_2,a_1)\rightarrow \C
\ee
defined through:
\be
\langle t_1,t_2\rangle_{a_1,a_2}^\Omega\eqdef \tr_{a_2}^\Omega(t_1 t_2)~~,
~\forall t_1\in \Hom_{\HDF(X,W)}(a_1,a_2)~~,~\forall t_2\in \Hom_{\HDF(X,W)}(a_2,a_1)~
\ee 
is non-degenerate for any two holomorphic factorizations $a_1,a_2$ of
$W$. Hence $(\HDF(X,W),\tr^\Omega)$ is a Calabi-Yau supercategory
of signature $\mu=\hat d$ in the sense of {\rm \cite[Section 2]{nlg1}}.
\end{customthm}

\

\noindent The $\Z_2$-graded category $\HDF(X,W)$ admits an automorphism $\Sigma$
which squares to the identity and comes with natural isomorphisms: 
\be
\Hom_{\HDF(X,W)}(a_1,\Sigma(a_2))\simeq \Hom_{\HDF(X,W)}(\Sigma(a_1), a_2)\simeq \rPi\Hom_{\HDF(X,W)}(a_1, a_2)~~,
\ee
where $\rPi$ is the parity change functor of the category of
$\Z_2$-graded vector spaces. As a consequence, $\HDF(X,W)$ can be
reconstructed from its even subcategory $\HDF^\0(X,W)$ (which is
obtained from $\HDF(X,W)$ by keeping only morphisms of $\Z_2$-degree equal to $\0$). 
Then Theorem \ref{thm:B} can be reformulated as follows:

\

\begin{customthm}{B$'$.}
\label{thm:B'}
Suppose that the critical set $Z_W\!$ is compact. Then $\Sigma^d\!$ is a Serre functor
for the category $\HDF^\0\!(\!X,\!W\!)$, where:
\be
\Sigma^d=\twopartdef{\Sigma~}{d~\mathrm{is~odd}}{\id_{\HDF^\0(X,W)}~~}{d~\mathrm{is~even}}~~.
\ee
\end{customthm}

\

\noindent Notice that $\Sigma^d$ depends only on the signature $\mu=\hat d$.

\

The differentials $\updelta_W=\bpd_{\wedge
  TX}+\ioda_W$ and
$\updelta_{a_1,a_2}=\bpd_{Hom(E_1,E_2)}+\fd_{a_1,a_2}$ can be viewed
as deformations of the Dolbeault operators $\bpd_{\wedge TX}$ and
$\bpd_{Hom(E_1,E_2)}$ respectively.  When $W=0$, the differential
$\updelta_W$ reduces to $\bpd_{\wedge TX}$ and one can check that
Theorem \ref{thm:A} reduces to Serre duality (on the non-compact
complex manifold $X$) for the holomorphic vector bundle $\wedge
TX$. In this case, a particular class of holomorphic factorizations is
given by pairs of the form $(E,D)=(E,0)$, for which Theorem
\ref{thm:B} again reduces to Serre duality. Accordingly, both
theorems can be viewed as ``deformed'' versions of ordinary Serre
duality \cite{Serre,RR,AK} on (non-compact) complex manifolds. We will
prove them by reduction to the latter by using certain spectral
sequences which relate the cohomology of the differentials
$\updelta_W$ (respectively $\updelta_{a_1,a_2}$) to the Dolbeault
cohomology of the holomorphic vector bundles ${\wedge TX}$
(respectively $Hom(E_1,E_2)$).

The paper is organized as follows. Section \ref{sec:duality} recalls
some well-known facts regarding duality for complexes of
Fr\'echet-Schwartz (FS) and dual of Fr\'echet-Schwartz (DFS) spaces.
Section \ref{sec:spec} proves some results regarding spectral
sequences which will be used later on. In Section \ref{sec:graded},
we discuss an extension of the notion of Serre pairing to the case of
graded holomorphic vector bundles. Section \ref{sec:bulk} gives our
proof of Theorems A and A$'$, while Section \ref{sec:boundary} proves
Theorems B and B$'$. Appendix \ref{app:shift} collects some properties of linear
categories and supercategories with involutive shift functor.

\paragraph{Notations and conventions}

We use the same notations and conventions as in reference \cite{nlg1}. 
In particular, the symbol $\Z_2$ stands for the field $\Z/2\Z$, 
whose elements we denote by $\0$ and $\1$. Given an integer $n\in \Z$, 
we denote its reduction modulo $2$ by ${\hat n}\in \Z_2$. The symbol $\i$ 
denotes the imaginary unit, while $\lrcorner$ denotes the 
contraction between differential forms and polyvector fields. 

Throughout the paper, $X$ is a {\em connected} and non-compact complex
manifold with holomorphically trivial canonical line bundle and $W$ is a non-constant
holomorphic complex-valued function defined on $X$. Since $X$ is
connected, the following conditions are equivalent:
\begin{enumerate}[(a)] \itemsep 0.0em
\item $X$ is paracompact
\item $X$ is second countable
\item $X$ is $\sigma$-compact
\item $X$ is countable at infinity.
\end{enumerate} 
We assume throughout the paper that one (and hence
all) of these equivalent conditions is satisfied by $X$.  The
$\C$-algebra of smooth complex-valued functions defined on $X$ is
denoted by $\cC^\infty(X)$, while the $\C$-algebra of holomorphic
complex valued functions defined on $X$ is denoted by $\O(X)$; the
sheaf of locally-defined holomorphic complex-valued functions is
denoted by $\cO_X$. Given a holomorphic vector bundle $V$ defined on
$X$, its $\O(X)$-module of globally-defined holomorphic sections is
denoted by $\rGamma(X,V)$ while its $\cC^\infty(X)$-module of
globally-defined smooth sections is denoted by $\rGamma_\sm(X,V)$. We
sometimes tacitly identify a holomorphic vector bundle with its sheaf
of local holomorphic sections.

\section{Duality for complexes of topological vector spaces}
\label{sec:duality}

In this section, we summarize some properties of complexes of Fr\'echet-Schwartz (FS) and 
duals of Fr\'echet-Schwartz (DFS) spaces, following \cite{Grot,MV,Cassa,TL}. 

Throughout this paper, a {\em topological vector space}
(tvs) means a topological vector space over the normed field $\C$ of
complex numbers. Given a tvs $F$, let $F^\ast$ denote the topological
dual of $F$, endowed with the strong topology. Given a continuous
linear map $f:F_1\rightarrow F_2$ between two topological vector
spaces, we denote its transpose by $f^t:F_2^\ast\rightarrow
F_1^\ast$; the transpose of $f$ is continuous with respect to the strong
topologies on the dual spaces. A continuous linear map $f:F_1\rightarrow F_2$ is called a
{\em topological homomorphism} if the corestriction
$f_0:F_1\rightarrow f(F_1)$ of $f$ to its image is an open map when
the image subspace $f(F_1)$ is endowed with the induced topology. The
linear map $f$ is called a {\em topological isomorphism} if it is a
homeomorphism. Given two topological vector spaces $F_1$ and $F_2$, we
write $F_1\simeq F_2$ if there exists at least one topological
isomorphism from $F_1$ to $F_2$. This defines an equivalence relation
on the collection of all topological vector spaces. Given two
topological vector spaces $F_1$ and $F_2$, we endow the vector space
$F_1\times F_2=F_1\oplus F_2$ with the product topology, which makes it into a tvs.

Suppose that $F_1$ and $F_2$ are Fr{\'e}chet spaces and
$f:F_1\rightarrow F_2$ is a continuous map. Then $f$ is a topological
homomorphism iff $f$ has closed range.  Moreover, the open mapping
theorem states that any continuous and surjective linear map
$f:F_1\rightarrow F_2$ is open. In particular, $f$ is a topological
isomorphism iff it is continuous and bijective.

A Fr\'echet-Schwartz (FS) space is a Fr\'echet space which is also a
Schwartz space (see \cite{Grot,MV}). Every such space is
{\em reflexive}, i.e. naturally topologically isomorphic with the strong
topological dual of its strong topological dual. A tvs is called a
 DFS space if it is the strong topological dual of an FS space;
DFS spaces are also reflexive.

\

\begin{Definition}
Let $F_1$ and $F_2$ be topological vector spaces. A {\em topological
  pairing} between $F_1$ and $F_2$ is a bilinear map $\langle\cdot,
\cdot \rangle:F_1\times F_2\rightarrow \C$ which is (jointly) continuous.
\end{Definition}

\

\begin{Definition}
\label{def:perfect}
Let $\langle \cdot, \cdot \rangle:F_1\times F_2 \rightarrow \C$ be a
topological pairing. The {\em left Riesz morphism} of $\langle \cdot,
\cdot \rangle$ is the linear map $\tau_l:F_1\rightarrow F_2^\ast$ defined
through:
\be
\tau_l(u)(v)\eqdef \langle u, v\rangle~,~~\forall u\in F_1~,~\forall v \in F_2~~.
\ee 
The {\em right Riesz morphism} of
$\langle \cdot, \cdot \rangle$ is the linear map
$\tau_r:F_2\rightarrow F_1^\ast$ defined through:
\be
\tau_r(v)(u)\eqdef \langle u, v\rangle~,~~\forall u\in F_1~,~\forall v \in F_2~~.
\ee 
We say that $\langle \cdot, \cdot \rangle$ is a {\em perfect
  topological pairing} if both $\tau_l$ and $\tau_r$ are topological
isomorphisms.
\end{Definition}

\

\begin{remark} If $\langle\cdot, \cdot \rangle:F_1\times F_2\rightarrow \C$ is a
perfect topological pairing, then the topological vector spaces $F_1$
and $F_2$ are reflexive:
\beqa
&& (F_1^\ast)^\ast\simeq F_2^\ast\simeq F_1~~,\nn\\
&& (F_2^\ast)^\ast\simeq F_1^\ast\simeq F_2~~.
\eeqa
If we identify $F_2$ with $F_1^\ast$ using the right Riesz isomorphism and $F_2^\ast$ with $F_1$ using 
the left Riesz isomorphism, then the bidual of $F_1$ identifies with $F_1$ and 
the perfect pairing identifies with the duality pairing between $F_1$ and $F_1^\ast$. 
\end{remark}

\

\begin{Definition}
A {\em topological cochain complex} is a sequence:
\be
(F^\bullet):~~~ \ldots \longrightarrow F^{k-1}\stackrel{\updelta_{k-1}}{\longrightarrow} F^{k}\stackrel{\updelta_k}\longrightarrow F^{k+1}\stackrel{\updelta_{k+1}}{\longrightarrow} \ldots 
\ee
where $F^k$ are topological vector spaces and $\updelta_k$ are
continuous linear maps which satisfy $\updelta_{k+1}\circ
\updelta_k=0$ for all $k\in \Z$. The cohomology of
such a complex in degree $k\in \Z$ is the vector
space:
\be
\rH^k(F^\bullet,\updelta)\eqdef \ker \updelta_k /\im\updelta_{k-1}~~,
\ee
endowed with the quotient topology. We say that the topological
complex $F^\bullet$ is {\em bounded} if there exist integers $k_1<k_2$
such that $F^k=0$ unless $k_1\leq k\leq k_2$.
\end{Definition}

\

\noindent Given a bounded topological cochain complex $F^\bullet$, we
set $F\eqdef \oplus_{k\in \Z} F^k=F^{k_1}\times \ldots \times F^{k_2}$
(endowed with the direct product topology) and $\updelta=\sum_{k\in
\Z} \updelta_k=\sum_{k=k_1}^{k_2} \updelta_k$. Then $F$ is a finitely 
$\Z$-graded topological vector space and $\updelta$ is a continuous degree $+1$
endomorphism of $F$ which satisfies $\updelta^2=0$. The total
cohomology:
\be
\rH(F^\bullet,\updelta)=\ker \updelta/\im \updelta=\rH(F,\updelta)=\bigoplus_{k\in \Z} \rH^k(F,\updelta)=\rH^{k_1}(F,\updelta)\times \ldots \times \rH^{k_2}(F,\updelta)
\ee
is a finitely $\Z$-graded topological vector space. When $u\in F^k$,
we set $\deg u\eqdef k$.

\

\begin{Definition}
Let $(F,\updelta)$ be a bounded topological cochain complex. The {\em
  topological dual} of $(F,\updelta)$ is the topological cochain
complex $(F^\ast,\updelta^t)$ defined through:
\be
(F^\ast)^k\eqdef (F^{-k})^\ast~~,~~(\updelta^t)_k\eqdef \updelta_{-k-1}^t~~,
\ee
where $\updelta_j^t$ denotes the transpose of $\updelta_j$. 
\end{Definition}

\

\begin{Definition}
\label{def:top_pairing}
Let $(F,\updelta)$ and $(\hF, \hupdelta)$ be two bounded topological
cochain complexes. A $\C$-bilinear map $\langle \cdot, \cdot
\rangle:F\times {\hF}\rightarrow \C$ is called a {\em topological
  pairing of complexes} if it satisfies the following conditions:
\begin{enumerate}
\itemsep 0.0em
\item \,$\langle \cdot, \cdot \rangle$ is jointly continuous.
\item \,$\langle\cdot, \cdot \rangle$ has degree zero:
\be
\langle \cdot, \cdot\rangle|_{F^i\times \hF^{j}}=0~~\mathrm{if}~~i+j\neq 0~.
\ee
\item \,$\langle \updelta u, v\rangle+(-1)^{\deg u}\langle u,\hupdelta v\rangle=0~$
for all homogeneous elements $u\in F$ and $v\in {\hF}$. 
\end{enumerate}
In this case, we say that $\langle \cdot, \cdot \rangle$ is a {\em
  perfect topological pairing of complexes} if its
restriction to $F^k\times \hF^{-k}$ is perfect for all $k\in \Z$.
\end{Definition}

\

\noindent A topological pairing of bounded topological cochain
complexes induces a degree zero topological pairing $\langle \cdot,
\cdot \rangle^H:\rH(F,\updelta)\times \rH(\hF,\hupdelta)\rightarrow
\C$ between total cohomologies.

\

\begin{Definition}
A topological pairing of bounded topological cochain complexes 
is called {\em cohomologically perfect} if the restriction
$\langle \cdot, \cdot \rangle^H \big|_{\rH^k(F,\updelta)\times
  \rH^{-k}(\hF,\hupdelta)}$ is a perfect topological pairing for all
$k\in \Z$.
\end{Definition} 

\

\noindent When $\langle \cdot, \cdot \rangle$ is cohomologically
perfect, the vector spaces $\rH^k(F,\updelta)$ and
$\rH^k(\hF,\hupdelta)$ are reflexive for all $k\in \Z$ and
$\langle\cdot,\cdot\rangle^H$ induces topological isomorphisms
$\rH^k(F,\updelta)\simeq \rH^{-k}(\hF,\hupdelta)^\ast$.

\

\begin{Proposition} 
\label{prop:fdDual}
Let $(F,\updelta)$ be a bounded topological cochain complex of FS spaces such
that $\rH^k(F,\updelta)$ is finite-dimensional for all $k\in
\Z$. Then:
\begin{enumerate}
\itemsep 0.0em
\item $\updelta_k$ is a topological homomorphism for all $k\in \Z$.
\item The dual complex $(F^\ast, \updelta^t)$ is a finite topological cochain
  complex of DFS spaces whose differentials are topological
  homomorphisms and whose cohomology is finite-dimensional in every
  degree.
\item The natural linear map:
\be
\rH^{-k}(F^\ast, \updelta^t)\rightarrow \rH^{k}(F,\updelta)^\vee~~
\ee
is bijective for all $k\in \Z$.
\end{enumerate} 
\end{Proposition}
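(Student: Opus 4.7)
The plan is to reduce all three assertions to standard facts about continuous linear maps between Fréchet and FS/DFS spaces: the open mapping theorem, the closed range theorem, and the annihilator description of kernels and images of transposes.

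For part (1), the key step is to show that each $\updelta_k \colon F^k \to F^{k+1}$ has closed image. Since $\updelta_{k+1} \circ \updelta_k = 0$, one has $\im \updelta_k \subseteq \ker \updelta_{k+1}$, and the quotient $\ker \updelta_{k+1}/\im \updelta_k$ coincides with $\rH^{k+1}(F,\updelta)$ endowed with the quotient topology. By hypothesis this quotient is finite-dimensional, which (under the implicit convention that a finite-dimensional topological vector space is Hausdorff) forces $\im \updelta_k$ to be closed in the closed subspace $\ker \updelta_{k+1}$, and hence in the Fréchet space $F^{k+1}$. The criterion recalled just before Definition~\ref{def:perfect} then shows that $\updelta_k$ is a topological homomorphism.

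For part (2), each $(F^\ast)^k = (F^{-k})^\ast$ is DFS by definition. The closed range theorem for Fréchet spaces guarantees that the transpose of a topological homomorphism between Fréchet spaces is itself a topological homomorphism between the strong duals; applied to (1) this shows that every $\updelta_{-k-1}^t$ is a topological homomorphism of DFS spaces. The complex $(F^\ast, \updelta^t)$ is bounded because $(F,\updelta)$ is, and the finite-dimensionality of $\rH^{-k}(F^\ast,\updelta^t)$ will be an immediate consequence of (3).

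For part (3), once all $\updelta_j$ are topological homomorphisms the standard annihilator identities are available:
\[
\ker \updelta_{k-1}^t = (\im \updelta_{k-1})^\perp, \qquad \im \updelta_k^t = (\ker \updelta_k)^\perp.
\]
Unwinding the indexing conventions in the definition of $(F^\ast, \updelta^t)$ gives
\[
\rH^{-k}(F^\ast,\updelta^t) \;=\; (\im \updelta_{k-1})^\perp \big/ (\ker \updelta_k)^\perp.
\]
The canonical duality pairing between $F^k$ and $(F^k)^\ast$ restricts to a pairing $\ker \updelta_k \times (\im \updelta_{k-1})^\perp \to \C$ which descends to the cohomology quotients and yields a natural bilinear map
\[
\rH^k(F,\updelta) \times \rH^{-k}(F^\ast,\updelta^t) \;\longrightarrow\; \C.
\]
The usual annihilator duality in a reflexive tvs identifies $(\im \updelta_{k-1})^\perp/(\ker \updelta_k)^\perp$ with the (topological, hence algebraic) dual of $\ker \updelta_k/\im \updelta_{k-1} = \rH^k(F,\updelta)$, giving the asserted bijection $\rH^{-k}(F^\ast,\updelta^t) \to \rH^k(F,\updelta)^\vee$; in particular both sides have the same finite dimension.

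The main obstacle is really part (1): the hypothesis that $\rH^k$ be finite-dimensional is a priori only algebraic, and one has to argue carefully that, under the conventions in force for bounded topological cochain complexes of FS spaces, this already forces $\im \updelta_k$ to be closed. Once closed range is secured in every degree, the remainder of the argument is formal manipulation of transposes, annihilators, and the reflexivity properties of FS and DFS spaces recalled at the start of the section.
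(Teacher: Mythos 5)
The paper disposes of this proposition by citing \cite[Theorems 1.5 and 1.6]{TL}; your proposal is essentially a self-contained reconstruction of what those theorems prove, and the overall architecture (closed range in every degree, then transposes, annihilators, and reflexivity) is the right one. Parts (2) and (3) are sound as sketched, with one caveat: the claim that the transpose of a topological homomorphism is again a topological homomorphism \emph{between the strong duals} is delicate for general Fr\'echet spaces; what makes it work here is that the $F^k$ are FS spaces, so their strong duals are DFS and the duality theory applies cleanly. You should invoke the FS hypothesis at that point rather than mere metrizability.

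The genuine gap is in part (1), and you half-admit it in your closing paragraph without closing it. The hypothesis is that $\rH^{k+1}(F,\updelta)=\ker\updelta_{k+1}/\im\updelta_k$ is finite-dimensional \emph{as a vector space}; this says nothing a priori about the quotient topology, and there is no convention under which a finite-dimensional tvs is automatically Hausdorff: the quotient of a Fr\'echet space by a dense, non-closed subspace of finite codimension is a finite-dimensional tvs with a non-Hausdorff (indeed nearly indiscrete) topology, and kernels of discontinuous functionals show that finite codimension alone never forces closedness. Thus ``finite-dimensional, hence Hausdorff, hence $\im\updelta_k$ closed'' is circular -- Hausdorffness of the quotient is \emph{equivalent} to the closedness you are trying to establish. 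The correct argument uses that $\im\updelta_k$ is not an arbitrary subspace but the image of a continuous map between Fr\'echet spaces: pick a finite-dimensional algebraic complement $V$ of $\im\updelta_k$ inside the closed (hence Fr\'echet) subspace $N:=\ker\updelta_{k+1}$; the map
\be
(F^k/\ker\updelta_k)\oplus V\longrightarrow N~,\qquad ([x],v)\mapsto \updelta_k(x)+v
\ee
is a continuous linear bijection of Fr\'echet spaces, hence a topological isomorphism by the open mapping theorem, and it carries the closed subspace $(F^k/\ker\updelta_k)\oplus 0$ onto $\im\updelta_k$, which is therefore closed. With this lemma supplied, the remainder of your argument goes through and reproduces the content of the cited theorems.
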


\begin{proof}
Follows from \cite[Theorem 1.5]{TL} and \cite[Theorem 1.6]{TL} upon
noticing that $\rH^k(F,\updelta)$ is separated and that its topological dual
coincides with the algebraic dual since $\rH^k(F,\updelta)$ is
finite-dimensional. \qed
\end{proof}

\

\begin{Corollary}
\label{cor:perf_criterion}
Let $(F,\updelta)$ and $(\hF, \hupdelta)$ be two bounded topological
complexes and let $\langle \cdot, \cdot \rangle:F\times
\hF\rightarrow\C$ be a perfect topological pairing between these
complexes. Suppose that $\rH^k(F,\updelta)$ is finite-dimensional for
all $k\in \Z$. Then $\rH^k(\hF,\hupdelta)$ is finite-dimensional for
all $k\in \Z$ and $\langle \cdot, \cdot \rangle$ is cohomologically
perfect.
\end{Corollary}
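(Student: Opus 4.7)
The plan is to transport the problem from $(\hF,\hupdelta)$ to the topological dual complex $(F^\ast,\updelta^t)$ via the right Riesz morphism of the perfect pairing, and then reduce to Proposition \ref{prop:fdDual} (which applies in the setting, implicit in the corollary, where $F$ is a complex of FS spaces). In other words, the perfect pairing should realise $\hF$ as the dual complex of $F$, so that the finite-dimensionality and Serre-type duality for $\rH^\bullet(\hF,\hupdelta)$ become pure translations of the corresponding statements for $\rH^\bullet(F^\ast,\updelta^t)$.

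First, by perfectness the right Riesz morphism $\tau_r : \hF\to F^\ast$ of Definition \ref{def:perfect} is a topological isomorphism, and by condition (2) of Definition \ref{def:top_pairing} it is of degree zero, i.e.\ it restricts to topological isomorphisms $\hF^k\simeq (F^{-k})^\ast=(F^\ast)^k$ for every $k\in\Z$. Condition (3), combined with the definition $(\updelta^t)_k=\updelta_{-k-1}^t$, translates directly into an intertwining relation between $\tau_r\circ\hupdelta_k$ and $(\updelta^t)_k\circ\tau_r$ that differs by a consistent sign on each graded piece; rescaling $\tau_r$ on $\hF^k$ by an appropriate sign $(-1)^{f(k)}$ turns it into a bona fide isomorphism of topological cochain complexes $(\hF,\hupdelta)\simeq (F^\ast,\updelta^t)$. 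The induced maps on cohomology are then topological isomorphisms $\rH^k(\hF,\hupdelta)\simeq \rH^k(F^\ast,\updelta^t)$ for all $k\in\Z$. Proposition \ref{prop:fdDual} supplies both the finite-dimensionality of $\rH^k(F^\ast,\updelta^t)$ (from the assumed finite-dimensionality of $\rH^k(F,\updelta)$) and the bijectivity of the natural evaluation map $\rH^{-k}(F^\ast,\updelta^t)\to \rH^k(F,\updelta)^\vee$. Composing yields finite-dimensionality of each $\rH^k(\hF,\hupdelta)$ together with a canonical linear isomorphism $\rH^{-k}(\hF,\hupdelta)\simeq \rH^k(F,\updelta)^\vee$.

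It then remains to check that the topological pairing $\langle\cdot,\cdot\rangle^H$ on cohomology is perfect. Unwinding the definitions, the pairing transported through $\tau_r$ coincides (up to the sign conventions fixed in the previous step) with the canonical evaluation pairing $\rH^k(F,\updelta)\times \rH^{-k}(F^\ast,\updelta^t)\to\C$ underlying Proposition \ref{prop:fdDual}(3). Since both spaces are finite-dimensional, the unique Hausdorff linear topology makes continuity automatic, and topological perfectness of the induced pairing reduces to algebraic non-degeneracy, which is exactly what the bijectivity statement in Proposition \ref{prop:fdDual}(3) provides. The main obstacle is essentially bookkeeping: one must carefully match the signs in Definition \ref{def:top_pairing}(3) with those in the transpose differential so that $\tau_r$ is promoted to a strict chain isomorphism, and then verify that the induced cohomological pairing is the canonical evaluation pairing rather than merely proportional to it. Once this is done, no further analytic input is needed beyond Proposition \ref{prop:fdDual}.
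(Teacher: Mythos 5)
Your proposal is correct and follows essentially the same route as the paper, whose proof is simply the assertion that the corollary follows immediately from Proposition \ref{prop:fdDual}; you have merely spelled out the implicit identification of $(\hF,\hupdelta)$ with the dual complex $(F^\ast,\updelta^t)$ via the (sign-corrected) right Riesz morphism. You also rightly note that the hypothesis that $F$ is a complex of FS spaces, required by Proposition \ref{prop:fdDual}, is left implicit in the corollary's statement.
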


\begin{proof}
Follows immediately from Proposition \ref{prop:fdDual}.\qed
\end{proof}

\section{Some results on spectral sequences}
\label{sec:spec}

In this section, we use the notations and conventions of \cite{BoT}.
Let $K=\oplus_{p,q\in \Z} K^{p,q}$ be a double complex of $\C$-vector
spaces with vertical differential $\dd_1:K^{p,q}\to K^{p,q+1} $ and
horizontal differential $\dd_2:K^{p,q}\to K^{p+1,q}$. We are
interested in the following special cases, which will arise in later
sections:
\begin{enumerate}[A.]
\item $K$ is concentrated in the first quadrant, i.e. $K^{p,q}$
  vanishes unless $p>0$ and $q>0$.
\item $K$ is concentrated in a horizontal strip above the horizontal
  axis, i.e. there exists $N>0$ such that $K^{p,q}$ vanishes unless
  $0\leq q \leq N$.
\end{enumerate}
Let $K=\oplus_{n\in \Z} K^n$ be the decomposition corresponding to the total 
grading of $K$, where: 
\be
K^n\eqdef \bigoplus_{p+q=n}K^{p,q}~~.
\ee
Let $\updelta\eqdef \dd_1+\dd$ be the total differential, where
$\dd\eqdef (-1)^p \dd_2$. The double complex can be
endowed with the \emph{standard} decreasing filtration $F$ given by:
\ben
\label{f9}
F^p K\eqdef \bigoplus_{i\geq p} \bigoplus_{q\in \Z} K^{i,q}~~.
\een
This filtration can in general be unbounded (as in case
B. above). However, for any $n\in \Z$, the spaces:
\be
\gr_F^p(K^n)\eqdef \frac{[K^n\cap F^p K]}{[K^n\cap F^{p-1} K]}=
\big[\!\bigoplus_{i\geq p, \,q=n-i}\!\! K^{i,q}\big]\Big/ \big[\!\bigoplus_{i\geq p-1, \,q=n-i} \!\! K^{i,q}\big]\simeq_\C K^{p,n-p}
\ee 
vanish in both cases A. and B. except for a finite
number of values of $p$. Here and below, the symbol $\simeq_\C$
denotes isomorphism of $\C$-vector spaces. Applying the
theory of exact couples to the bigraded complex $K$ endowed with the
filtration $F$, we obtain a spectral sequence which computes the
cohomology $\rH_{\updelta}^n(K)$ of the total complex $(K,\updelta)$:

\

\begin{Proposition}
\label{pro8} 
Assume that the double complex $(K,\dd_1,\dd_2)$ satisfies either of
the conditions A. or B. above. Then the filtration (\ref{f9})
defines a spectral sequence $\bE=(\bE_r,\bd_r)_{r\geq 0}$ which
converges to the total cohomology $\rH_\updelta(K)\eqdef \oplus_{n\in
  \Z}\rH^n_{\updelta}(K)$. For each $r\geq 0$, the page $\bE_r$ is
endowed with a bigrading given by the decomposition
$\bE_r=\oplus_{p,q\in \Z}\bE_r^{p,q}$ and with a
differential\,\footnote{We explain later (see \eqref{f7} and
  \eqref{specdef}) the relation between $\bd_r$, 
$\dd_1$ and $\dd_2$.} $\bd_r: \bE^{p,q}_r\to \bE_r^{p+r,q-r+1}$ 
defined recurrently by:
\be
 \bE^{p,q}_{r} \eqdef \rH(\bE^{p,q}_{r-1},\bd_{r-1})~.
\ee
For the first pages we have $\bd_0=\dd_1$ and $\bd_1=\dd:=(-1)^p\dd_2$, hence:
\be
\bE_1^{p,q}=\rH_{\dd_1}^{q}(\gr_F^p K)~,~~\bE_2^{p,q}=\rH_{\dd}^{p}(\bE_1^{p,q})~~.  
\ee
For each $n\in \Z$, the filtration \eqref{f9} induces a decreasing
filtration $(\bF^p\rH^n_\updelta(K))_{p\in \Z}$ of the vector space
$\rH^n_\updelta(K)$, whose associated gradeds~ $\gr^p_\bF
\,\rH^n_\updelta(K)\eqdef \frac{\bF^p\rH^n_\updelta(K) }{
    \bF^{p-1}\rH^n_\updelta(K)}$ satisfy:
\be
\gr^p_\bF \,\rH^n_\updelta(K) \simeq_\C \bE^{p,n-p}_{\infty}~,~~\forall p\in \Z~~,
\ee
where $\bE_{\infty}=\oplus_{p,q\in\Z}\bE^{p,q}_{\infty}$ is the limit of $\bE$.
\end{Proposition}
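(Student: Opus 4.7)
The plan is to apply the standard machinery of spectral sequences of filtered cochain complexes. Starting from the decreasing filtration $F^\bullet$ of the total complex $(K,\updelta)$, I would form the associated exact couple by combining the long exact cohomology sequences of the short exact sequences
\begin{equation*}
0 \longrightarrow F^{p+1} K \longrightarrow F^p K \longrightarrow \gr_F^p K \longrightarrow 0, \qquad p \in \Z,
\end{equation*}
where the maps are the inclusion and the quotient by $F^{p+1}$. Setting $\bD^{p,q} \eqdef \rH^{p+q}(F^p K)$ and $\bE_1^{p,q} \eqdef \rH^{p+q}(\gr_F^p K)$, and iterating the exact couple in the standard way, one obtains pages $\bE_r^{p,q}$ endowed with differentials $\bd_r$ of bidegree $(r,1-r)$.

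The central point to check is convergence. Although the filtration $F^\bullet K$ of the total complex is bounded only in case A (and not in case B), for every fixed $n\in\Z$ the induced filtration on $K^n$ is finite in both cases: indeed $\gr_F^p K^n \simeq_\C K^{p,\,n-p}$, and the support hypotheses A or B force this to vanish outside a finite range of $p$ (namely $0\le p\le n$ in case A, and $n-N\le p\le n$ in case B). Thus the filtration is \emph{degreewise finite}, which is enough to ensure that for each bidegree $(p,q)$ the incoming and outgoing differentials $\bd_r$ must vanish once $r$ is large enough, so that $\bE^{p,q}_\infty$ is well-defined as the stable value. The filtration $\bF^p \rH^n_\updelta(K) \eqdef \mathrm{image}\bigl[\rH^n(F^p K)\to \rH^n(K)\bigr]$ is then exhaustive, Hausdorff and bounded in each degree $n$, and the standard identification $\gr_\bF^p \rH^n_\updelta(K) \simeq_\C \bE^{p,\,n-p}_\infty$ follows by diagram chase in the iterated exact couple.

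It remains to identify the low-page differentials. Since $\gr_F^p K^{p+q}\simeq_\C K^{p,q}$, on $\bE_0^{p,q}$ the only component of $\updelta=\dd_1+\dd$ that survives the quotient by $F^{p+1}$ is $\dd_1$, because $\dd=(-1)^p\dd_2$ raises the filtration degree by one. Hence $\bd_0=\dd_1$ and $\bE_1^{p,q}=\rH^q_{\dd_1}(K^{p,\bullet})$. A direct unfolding of the connecting homomorphism in the exact couple shows that $\bd_1$ is induced by $\dd$, giving $\bd_1=(-1)^p\dd_2$ and $\bE_2^{p,q}=\rH^p_{\dd}(\bE_1^{\bullet,q})$. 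The main obstacle is the convergence argument in case B, where the filtration of the total complex is unbounded below; the key is to reduce to the degreewise finiteness of the filtration on each $K^n$ so that strong convergence follows from the standard regular-filtration theorem.
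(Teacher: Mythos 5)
Your proposal is correct and follows essentially the same route as the paper: the paper's proof simply cites the exact-couple construction of Bott--Tu (\S 14, Theorems 14.6 and 14.14) and notes, exactly as you do, that the argument requires only that the induced filtration on each $K^n$ be finite, which conditions A and B guarantee. You have merely unpacked the details that the paper delegates to the reference, so there is nothing to add.
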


\begin{proof} 
The proof can be found in \cite[\SSS 14]{BoT} (see Theorem 14.6 and
Theorem 14.14). The proof is not restricted to the case when $F^p K$ is a
finite filtration, but requires only that the induced filtration $F^p K^n$
is finite for each $n$, which is true when condition A. or condition B.
is satisfied. \qed
\end{proof}

Consider another double complex  $(\hK, \hd_1,\hd_2)$ with the total
differential $\hat{\updelta}=\hd_1+\hd$, where $\hd=(-1)^p\hd_2$.
Let $\tau: K\rightarrow \hK$ be a morphism of double complexes
and $\tau_*:\rH_\updelta(K)\rightarrow \rH_\hdelta(\hK)$ denote the
morphism of graded $\C$-vector spaces induced by $\tau$ on total cohomology.  Let
$F^p\hK$ denote the analogue of the filtration \eqref{f9} for $\hK$ and
$(\hbF^p \rH^n_{\hupdelta}(\hK))_{p\in \Z}$ denote the filtration induced by $F^p$ on
the homogeneous components of total cohomology.

\vspace{3mm}

\begin{Theorem}
\label{T10} Suppose that $\tau$ is injective and that it induces
isomorphisms of vector spaces $\rH_{\dd_1}^{p,q}(K)\simeq_\C
\rH_{\hd_1}^{p,q}(\hK)$ for all $p,q\in \Z$ in vertical cohomology.
Assume that both double complexes $(K,\dd_1,\dd_2)$ and
$(\hK,\hd_1,\hd_2)$ satisfy condition A. or that both satisfy
condition B. above. Then $\tau_\ast$ satisfies:
\be
\tau_\ast (\bF^p \rH^n_\updelta (K))\subset \hbF^p \rH^n_{\hdelta}(\hK)~,~\forall p, n\in \Z
\ee
and restricts to isomorphisms of vector spaces between the associated gradeds:
\be
\tau_\ast: \gr_\bF^p \,\rH^n_\updelta (K)\stackrel{\sim}{\rightarrow} \gr_{\hbF}^p \,\rH^n_{\hdelta}(\hK)~,~\forall p, n\in \Z~.
\ee
\end{Theorem}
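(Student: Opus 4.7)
The plan is to apply the classical spectral sequence comparison theorem in the form provided by Proposition~\ref{pro8}. Since $\tau$ is a morphism of double complexes, it sends $K^{p,q}$ into $\hK^{p,q}$ and therefore respects the standard filtrations: $\tau(F^p K)\subset F^p\hK$ for every $p\in\Z$. This makes $\tau$ a morphism of filtered complexes from $(K,\updelta)$ to $(\hK,\hdelta)$, which immediately yields the first conclusion $\tau_\ast(\bF^p \rH^n_\updelta(K))\subset \hbF^p\rH^n_{\hdelta}(\hK)$, and produces a morphism of spectral sequences $\tau_r:\bE_r(K)\to\bE_r(\hK)$ compatible with all page differentials $\bd_r$.

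Next I translate the hypothesis into an isomorphism at the $\bE_1$-page. By Proposition~\ref{pro8}, $\bE_1^{p,q}=\rH^q_{\dd_1}(\gr_F^p K)$; since $\gr_F^p K$ is canonically the $p$-th column $\bigoplus_q K^{p,q}$ equipped with the differential $\dd_1$, this is precisely $\rH^{p,q}_{\dd_1}(K)$, and the same holds for $\hK$. The assumed isomorphism on vertical cohomology therefore amounts to $\tau_1$ being an isomorphism of bigraded complexes. A routine induction on $r$ then propagates the isomorphism to every subsequent page: since $\bE_{r+1}=\rH(\bE_r,\bd_r)$ and $\tau_r$ commutes with $\bd_r$, the induced map $\tau_{r+1}=\rH(\tau_r)$ is an isomorphism whenever $\tau_r$ is.

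The last step is the passage to the limit. In both cases A and B, for each fixed total degree $n$ the induced filtration on $K^n$ is finite, so at each bidegree $(p,q)$ the page differentials entering and leaving $\bE_r^{p,q}$ vanish for $r$ sufficiently large. Consequently $\bE_r^{p,q}$ stabilizes to $\bE_\infty^{p,q}$ at a finite page, and the isomorphisms $\tau_r$ descend to an isomorphism $\tau_\infty:\bE_\infty(K)\stackrel{\sim}{\rightarrow}\bE_\infty(\hK)$. Combining this with the convergence statement $\bE_\infty^{p,n-p}\simeq_\C\gr^p_\bF\rH^n_\updelta(K)$ from Proposition~\ref{pro8}, together with its counterpart for $\hK$, we obtain the desired isomorphism $\tau_\ast:\gr^p_\bF\rH^n_\updelta(K)\stackrel{\sim}{\rightarrow}\gr^p_{\hbF}\rH^n_{\hdelta}(\hK)$ for all $p,n\in\Z$.

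I expect the proof to be essentially a bookkeeping exercise; the injectivity hypothesis on $\tau$ does not actually enter the argument, as only strict compatibility with the bigrading and the iso on the $\bE_1$-page are needed. The one point that warrants care is the convergence in case B, where the total filtration $F^\bullet K$ is unbounded even though each $F^\bullet K^n$ is finite; this finiteness at each fixed total degree is precisely what ensures stabilization at a finite page and lets the spectral sequence comparison argument go through without modification.
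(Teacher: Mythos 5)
Your proof is correct and follows essentially the same route as the paper's: an induction on pages using $\bE_{r+1}=\rH(\bE_r,\bd_r)$, followed by stabilization at a finite page in each fixed total degree, which is guaranteed in both cases A and B by the finiteness of the induced filtration on $K^n$. The only difference is that you invoke as standard the fact that a morphism of filtered complexes induces a morphism of spectral sequences commuting with every $\bd_r$, whereas the paper verifies this compatibility by hand via the explicit zig-zag representatives of Bott--Tu (the chains $c_i$ satisfying \eqref{f7}); your observation that the injectivity hypothesis on $\tau$ is never actually used is also correct, since the mapping lemma for spectral sequences requires only the isomorphism on the $\bE_1$-page.
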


\begin{proof} 
We apply Proposition \ref{pro8} for both double complexes
$(K,\dd_1,\dd_2)$ and $(\hK, \hd_1,\hd_2)$ and denote by $\bE$ and
$\bhE$ the spectral sequences defined by the standard filtration
$F^p$.  By assumption, the first pages of these two spectral sequences
coincide. The statement of the theorem follows if we show that the
spectral sequences coincide on each page. To show this, we have
to look closer at the components $E_r^{p,q}$, whose elements arise as
cohomology classes of previous pages.

An element $b\in K$ represents a cohomology class in $\bE_r$ iff it is
a cocycle in all $\bE_1$, $\bE_2$, \ldots, $\bE_{r-1}$. Let us denote
by $[b]_r$ the image of $b$ in $\bE_r$ if defined. An explicit
description of this can be found in \cite[page 164]{BoT}, which states
that $b\in K^{p,q}$ represents an element of $\bE_r$ iff there exists
a chain of elements $c_i\in K^{p+i,q-i}$ with $1\leq i\leq r-1$ such
that:
\ben
\label{f7}
\dd_1 b=0~,~~ \dd b=-\dd_1c_1~~\mathrm{and}~~ \dd c_{i-1}=-\dd_1c_i
\een
for $i=2,\ldots, r-1$. Moreover, the differential $\bd_r$ of $\bE_r$ is given by:
\ben 
\label{specdef}
\bd_r [b]_r \eqdef [\dd c_{r-1}]_r~.
\een
The differential $\bd_r[b]_r$ is defined for $b$ belonging to $\ker
\dd_1$ and depends only on the class $[b]_1\in
\rH_{\dd_1}^{p,q}(K)$. Indeed, for any $b'\in K^{p,q-1}$, the element
$\tilde b:=b+\dd_1b'$ satisfies $\dd_1 \tilde b=0$, $\dd\tilde{b}=\dd b 
+\dd (\dd_1b')=-\dd_1c_1-\dd_1\dd b'=-\dd_1(c_1+\dd b')$. We also have
$\dd (c_1+\dd b')=\dd c_1$. This means that $\tilde b$ satisfies
the same system of equations (\ref{f7}) after modifying $\tilde
c_1:=c_1+\dd b'$. Thus, for any $\tilde{b}$ such that $[\tilde
  b]_1=[b]_1\in H_{\dd_1}^{p,q}(K)$, we have $\bd_r[\tilde{b}]_r
=[\dd c_{r-1}]_r =\bd_r[b]_r$. The definition of $\bd_r$ does not
depend on the choice of the elements $c_1,\ldots, c_{r-1}$ 
representing classes $[c_{i-1}]_{i}\in \bE_i$ and satisfying
\eqref{f7}. Everything said above also holds for the differentials
$\bhd_r$ of the spectral sequence defined by the filtration $F$
of the complex $\hK^{p,q}$.  We will also use the notation
$\tau_*$ for the map induced by $\tau$ on any of $\bE^{p,q}_r$.

\

\noindent In what follows we prove by induction on $s$ that the following statements hold 
for any $p,q\in \Z$:
\begin{itemize}
\item[(\textbf{i})] the inclusion $\tau$ induces an isomorphism
  $\tau_*: \bE^{p,q}_s(K)\stackrel{\sim}{\rightarrow}
  \bhE^{p,q}_s(\hK)$,
\item[(\textbf{ii})] $[\tau b]_s=\tau_*[b]_s$ for any $b\in K^{p,q}$
  such that $[b]_s$ is defined,
\item[(\textbf{iii})] $\bhd_{s}[\hat{b}]_s=\bhd_{s}[\tau_*
  [b]_1]_s=\tau_* \bd_s[b]_s$ for any $\hat{b}\in K^{p,q}$ and for $b
  \in K^{p,q}$ such that $[\tau b]_1=[\hat{b}]_1$ and $[b]_s$ is
  defined.
\end{itemize}
For $s=1$, statement (\textbf{i}) is just the assumption of the theorem
while the other two statements follow from the fact that $\tau$ is an injective map of
double complexes. 

Let us assume that (\textbf{i}) -- (\textbf{iii}) hold for all $s\leq r-1$.
 To show (\textbf{ii}) for $s=r$, consider an element $b\in K^{p,q}$ such that
$[b]_r$ is defined. Then we have $\bd_{r-1}[b]_{r-1}=0$ as well as $\bhd_{r-1}[\tau
  b]_{r-1}=0$. We also  have $[\tau  b]_{r-1}=\tau_*[b]_{r-1}$. Using
(\textbf{i}) and (\textbf{iii}) for $s=r-1$ gives:
\be
 [\tau b]_{r-1}+\bhd_{r-1}\bhE_{r-1} =
\tau_*[b]_{r-1}+\bhd_{r-1}\bhE_{r-1}=
\tau_*([b]_{r-1}+\bd_{r-1}\bE_{r-1})~~,
\ee 
which shows that $[\tau b]_{r}=\tau_*[b]_{r}$ as cohomology classes
in $\bE_r$. To show that (\textbf{iii}) holds for $s=r$, consider an
element $\hat{b}\in \hK^{p,q}$ which represents $[\hat{b}]_r$ in
$\bhE_r$. Since $\tau_\ast: \rH_{\dd_1}^{p,q}(K)\rightarrow
\rH_{\hd_1}^{p,q}(\hK)$ is an isomorphism, there exist $b\in K^{p,q}$
such that $\tau_*[b]_1=[\hat{b}]_1$. A chain of elements $c_i\in
K^{p+i,q-i}$ ($1\leq i\leq r-1$) satisfying (\ref{f7}) defines
$\bd_r[b]_r$. Since $\tau$ is a morphism of bigraded complexes, we
have $\hd_1(\tau c )= \tau (\dd_1 c)$ and $\hd (\tau c) = \tau (\dd
c)$. Then the chain of elements $\tau c_i\in \hK^{p-i,q+i}$ and
$\tau b$ satisfy \eqref{f7} for $\bhE_r$, so it determines
$\hat{\bd}_r[\hat{b}]_r$. To see this, we compute:
\begin{multline}
\hat{\bd}_r[\hat{b}]_r=\bhd_r\big[[\hat{b}]_1\big]_r = \bhd_r\big[\tau_*[b]_1\big]_r=\bhd_r\big[[\tau b]_1\big]_r
=[\dd (\tau c_{r-1})]_r=[\tau(\dd c_{r-1})]_r=\\ =\tau_*[\dd c_{r-1}]_r=\tau_*(\bd_r\big[[b]_1\big]_r) = \tau_*(\bd_r[b]_r) ~ ,\quad\quad\quad
\end{multline}
where we used statement (\textbf{ii}) for $b:=c_{r-1}$.

To show that (\textbf{i}) holds for $s=r$, we start from statement
(\textbf{iii}) for $s=r-1$, which tells us that the differentials are
compatible with the isomorphism $\tau_*$, namely we have
$\hat{\bd}_{r-1}(\tau_*[b])=\tau_*(\bd_{r-1}[b])$. This implies that
the cohomology taken with respect to these differentials is also
compatible with $\tau_*$. Thus:
\be
\bhE^{p,q}_r=\rH(\bhE^{p,q}_r,\hat{\bd}_{r-1}) =
\rH\big(\tau_*({\bE}^{p,q}_r),\hat{\bd}_{r-1}\big) =
\tau_*\big(\rH(\bE^{p,q}_r,\bd_{r-1})\big)=\tau_*\big(\bE^{p,q}_r\big) ~.
\ee 
Since the bigradings of both graded complexes $K$ and $\hK$ satisfy
either condition A. or condition B., it follows that for any $n$ there
exists $N=N(n)$ such that $\bd_r (K^{p',q'})=0=\hat{\bd}_r
\big(\hK^{p',q'}\big)$ for all $r>N$, $p+q = n$ and for both sets of
pairs $(p',q')=(p+r,q-r+1)$ and $(p',q')=(p-r,q+r-1)$. Thus for $r>N$
and $p+q=n$, we have
$\tau_*(\bE^{p,q}_{\infty})=\tau_*(\bE^{p,q}_r)=\bhE^{p,q}_r=\bhE^{p,q}_{\infty}$. By
Proposition \ref{pro8}, this is equivalent with the statement that the
map $\tau_*:\gr_\bF^p\,\rH^n_\updelta (K)\stackrel{\sim}{\rightarrow}
\gr_\hbF^p\,\rH^n_{\hat{\updelta}}(\hK)$ is an isomorphism for any $p$
and $n$. \qed
\end{proof}

\section{Graded Serre pairings}
\label{sec:graded}

In this section, we discuss a version of the Serre pairing (see
\cite{Serre}) which exists for graded holomorphic vector bundles.
This pairing will arise later on in the proof of non-degeneracy of
cohomological bulk and boundary traces.

\subsection{Topological complexes of differential forms valued in a holomorphic vector bundle}

For any holomorphic vector bundle $V$ on $X$, let
$\rOmega^{p,q}(X,V)=\rGamma_\sm(X,\wedge^p T^\ast X\otimes
\wedge^q \bar{T}^\ast X\otimes V)$ denote the space of $V$-valued smooth forms of
type $(p,q)$ defined on $X$ and $\rOmega^{p,q}_c(X,V)$ denote the
subspace of $\rOmega^{p,q}(X, V)$ consisting of compactly-supported
forms. Then $\rOmega^{p,q}(X,V)$ is an FS space
\cite{Serre,Cassa} when endowed with the topology of uniform
convergence of all derivatives on compact subsets. The subspace
$\rOmega^{p,q}_c(X,V)$ is dense in $\rOmega^{p,q}(X,V)$, being the
space of ``test sections'' of the bundle $\wedge^p T^\ast X\otimes \wedge^q
\bar{T}^\ast X\otimes V$. Let:
\be
\rOmega(X,V)\eqdef \bigoplus_{p,q=0}^d \rOmega^{p,q}(X,V)~~,~~\rOmega_c(X,V)\eqdef \bigoplus_{p,q=0}^d \rOmega^{p,q}_c(X,V)
\ee
and
\be
\cA(X,V)\eqdef \bigoplus_{q=0}^d \rOmega^{0,q}(X,V)~~,~~\cA_c(X,V)\eqdef \bigoplus_{q=0}^d \rOmega^{0,q}_c(X,V)~~.
\ee
Then $\rOmega(X,V)=\rGamma_\sm(X, \wedge
T^\ast X\otimes\wedge\bar{T}^\ast X\otimes V)$ and $\cA(X,V)=\rGamma_\sm(X,\wedge \bar{T}^\ast X\otimes V)$ are FS spaces which contain $\rOmega_c(X,V)$
(respectively $\cA_c(X,V)$) as dense subspaces.  Notice that
$\cA(X,V)$ is a closed subspace of $\rOmega(X,V)$. Moreover,
$(\cA(X,V),\bpd_V)$ is a finite topological complex of FS spaces
\cite{Serre,Cassa}, where $\bpd_V$ denotes the Dolbeault differential
of $V$.

\subsection{Topological complexes of bundle-valued currents with compact support and the classical Serre pairing} 

Let $\hOmega^{p,q}(X,V)$ denote the space of distributions
with compact support valued in the bundle
$ \wedge^p T^\ast X\otimes \wedge^q {\bar T}^\ast X\otimes V$.
Consider the bigraded topological vector space:
\be
\hOmega(X,V)\eqdef \bigoplus_{p,q=0}^d \hOmega^{p,q}(X,V)~~
\ee
of distributions with compact support valued in the vector bundle
$ \wedge T^\ast X\otimes \wedge \bar{T}^\ast X\otimes V$.  Let $V^\vee\eqdef
Hom(V,\cO_X)$ denote the dual bundle to $V$. Then
$\hOmega^{d-p,d-q}(X,V)$ is topologically isomorphic (see \cite{Serre}) with
the topological dual $\rOmega^{p,q}(X,V^\vee)^\ast$, where the latter
is endowed with the strong topology. The corresponding perfect duality pairing
is known as the {\em Serre pairing} and it is given by (see \cite{Serre}):
\be
(\omega,T)\rightarrow \int_X\omega\wedge T~~,~\forall \omega\in \rOmega^{p,q}(X,V^\vee)~,~\forall T\in \hOmega^{d-p,d-q}(X,V)~~,
\ee
where $\int_X$ denotes integration of compactly supported currents of
type $(d,d)$ on $X$ with respect to the orientation induced by the
complex structure of $X$.  Below, we introduce a version of this
pairing adapted to the case when $V$ is replaced by a $\Z$-graded or
$\Z_2$-graded holomorphic vector bundle.

\subsection{The graded Serre pairing of a $\Z$-graded or $\Z_2$-graded holomorphic vector bundle}
\label{subsec:SerrePairing}

Let $A$ be either of the Abelian groups $\Z$ or $\Z_2$.  Let
$Q=\oplus_{j\in A} Q^j$ be an $A$-graded holomorphic vector 
bundle\footnote{For $A=\Z$, the grading of $Q$ is necessarily
  concentrated in a finite number of degrees, since $Q$ has finite
  rank.}
defined on $X$.  Let $Q^\vee$ denote the dual vector bundle of $Q$,
which we grade by the decomposition:
\be
Q=\bigoplus_{j\in A}{(Q^\vee)^j}~~,
\ee 
where $(Q^\vee)^j\eqdef (Q^{-j})^\vee$. In this case, the bundles
$\wedge T^\ast X \otimes \wedge \bar{T}^\ast X \otimes Q$ and $\wedge
T^\ast X \otimes \wedge \bar{T}^\ast X \otimes Q^\vee$ are
$\Z^2\times A$-graded with homogeneous components:
\beqa
&& (\wedge T^\ast X \otimes \wedge \bar{T}^\ast X \otimes Q)^{p,q,i}\eqdef \wedge^p T^\ast X \otimes \wedge^q \bar{T}^\ast X \otimes Q^i~~,\\
&& (\wedge T^\ast X \otimes \wedge \bar{T}^\ast X \otimes Q^\vee)^{p,q,j}\eqdef \wedge^p T^\ast X \otimes \wedge^q \bar{T}^\ast X \otimes (Q^{-j})^\vee~~.
\eeqa
Viewing $\cO_X$ as an $A$-graded holomorphic vector bundle
concentrated in degree zero, the bundle $\wedge T^\ast X \otimes
\wedge \bar{T}^\ast X\simeq \wedge T^\ast X \otimes \wedge
\bar{T}^\ast X \otimes \cO_X$ is also $\Z^2\times A$-graded with the
third grading concentrated in degree zero. The spaces
$\rOmega(X,Q)=\rGamma_\sm(X, \wedge T^\ast X \otimes \wedge
\bar{T}^\ast X\wedge Q)$ and $\hOmega(X,Q)$ are trigraded accordingly. Notice 
that $\rOmega(X,Q)$ is an FS space, while $\hOmega(X,Q)$ is a DFS space. 

\

\begin{Definition} 
The {\em graded duality morphism of $Q$} is the morphism
$\ev_Q:Q\otimes Q^\vee \rightarrow \cO_X$ of $A$-graded holomorphic
vector bundles determined uniquely by the condition:
\ben
\label{ev_Q}
\ev_Q(x)(v)(w)\eqdef (-1)^i \delta_{i+j,0} w(v)~,~~\forall v\in  Q^i~,~\forall w\in  (Q^\vee)^j~,~\forall x\in X~~.
\een
\end{Definition}

\

\noindent Together with the wedge product of differential forms, $\ev_Q$ induces a
morphism of holomorphic vector bundles:
\ben
\label{cBdef}
\cS_Q:(\wedge T^\ast X \otimes \wedge \bar{T}^\ast X \otimes Q)\otimes 
(\wedge T^\ast X\otimes \wedge \bar{T}^\ast X \otimes Q^\vee)\rightarrow \wedge T^\ast X\otimes \wedge \bar{T}^\ast X
\een
which is determined uniquely by the condition: 
\ben
\label{cBQdef}
\cS_Q(x)(\omega_1\otimes v, \omega_2\otimes w)\eqdef (-1)^{i(p_2+q_2+1)} \updelta_{i+j,0}w(v) \, \omega_1\wedge \omega_2  
\een
for $\omega_1\in \wedge^{p_1} T_x^\ast X\otimes \wedge^{q_1} \bar{T}^\ast_x
X$, $\omega_2\in \wedge^{p_2} T_x^\ast X\otimes \wedge^{q_2} \bar{T}^\ast_x
X$ and $v\in Q_x^i, w\in (Q_x^\vee)^j$ (where $x\in X$).  The bundle
morphism $\cS_Q$ induces a continuous bilinear map: 
\ben
\label{BQdef}
S_Q:\rOmega(X,Q)\times \hOmega(X,Q^\vee)\rightarrow \hOmega(X)~~. 
\een
With respect to the trigradings described above, the maps
$\cS_Q$ and $S_Q$ are homogeneous of tridegree $(0,0,0)$.

\

\begin{Definition}
The graded Serre pairing of the $A$-graded holomorphic vector bundle $Q$
is the topological pairing $\S_Q:\rOmega(X,Q)\times
\hOmega(X,Q^\vee)\rightarrow \C$ defined through:
\be
\S_Q(\omega,T)\eqdef \int_X S_Q(\omega, T)~~,
\ee
where $\int_X L$ is defined to equal zero unless $L\in \hOmega(X)$ has type $(d,d)$. 
\end{Definition}

\

\noindent If we view $\C$ as a $\Z^2\times A$-graded vector space whose grading is
concentrated in degree $(0,0,0)$, then $\S_Q$ has tridegree
$(-d,-d,0)$. Thus:
\be
\S_Q(\omega,T)=\updelta_{p+p'\!,d}\updelta_{q+q'\!,d}\updelta_{i+j,0}\S_Q(\omega,T)~,
\ee
when $\omega\in \rOmega^{p,q}(X,Q^i)$ and $T\in \hOmega^{p'\!,q'}(X,(Q^\vee)^j)$.

\

\begin{Lemma}
\label{lemma:Serre}
$\S_Q$ is a perfect pairing  between the topological vector spaces
  $\rOmega(X,Q)$ and $\hOmega(X,Q^\vee)$.
\end{Lemma}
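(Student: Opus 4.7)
The plan is to reduce the graded Serre pairing to the classical Serre pairing on each trihomogeneous component and then assemble finitely many perfect pairings into one.

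First, I would note that both $\rOmega(X,Q)$ and $\hOmega(X,Q^\vee)$ decompose as \emph{finite} direct sums of their trihomogeneous components: the differential-form bidegrees range over the finite set $\{0,\dots,d\}^2$, and the $A$-grading of $Q$ is concentrated in finitely many degrees (trivially so when $A=\Z_2$, and by finiteness of the rank of $Q$ when $A=\Z$). The subspace topologies on the summands agree with the original FS/DFS topologies, and the direct-sum topology is the product topology, so the full spaces are topologically isomorphic to the products of their pieces.

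Next, from the defining formula \eqref{cBQdef} and the tridegree $(-d,-d,0)$ of $\S_Q$ already recorded above the lemma, the pairing $\S_Q(\omega,T)$ vanishes unless $\omega\in \rOmega^{p,q}(X,Q^i)$ and $T\in \hOmega^{d-p,d-q}(X,(Q^{-i})^\vee)$ for some $(p,q,i)$. Hence $\S_Q$ decomposes as the direct sum of its trihomogeneous components
\be
\S_Q^{p,q,i}:\rOmega^{p,q}(X,Q^i)\times \hOmega^{d-p,d-q}(X,(Q^{-i})^\vee)\rightarrow \C~~.
\ee
The key comparison is that, up to the nonzero real scalar $(-1)^{i(d-p+d-q+1)}$ from \eqref{cBQdef} and the identification $(Q^\vee)^{-i}=(Q^i)^\vee$, the pairing $\S_Q^{p,q,i}$ coincides with the classical Serre pairing of the (ungraded) holomorphic vector bundle $V=Q^i$ recalled in Section~3.2. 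By the classical Serre duality result for currents with compact support on the paracompact complex manifold $X$, each such restricted pairing is perfect, i.e.\ its left and right Riesz morphisms are topological isomorphisms between FS and DFS spaces. Multiplying by a nonzero constant preserves this property, so each $\S_Q^{p,q,i}$ is perfect.

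Finally, I would verify the elementary fact that a finite direct sum of perfect topological pairings is itself a perfect topological pairing: if $\tau_l^{p,q,i}$ denotes the left Riesz morphism of $\S_Q^{p,q,i}$, then the left Riesz morphism of $\S_Q$ is the direct product of the $\tau_l^{p,q,i}$, and a finite product of topological isomorphisms between tvs is a topological isomorphism with respect to the product topologies; the same applies to the right Riesz morphism. This yields the lemma. The main obstacle is purely bookkeeping, namely matching the sign conventions in \eqref{ev_Q}–\eqref{cBQdef} against the classical Serre formula to confirm that the scalar coefficient is nonzero; all genuine analytic content is absorbed into the classical result of \cite{Serre,Cassa}.
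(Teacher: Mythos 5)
Your proposal is correct and follows essentially the same route as the paper: the paper's proof simply cites Propositions 4 and 5 of \cite{Serre}, i.e.\ the classical Serre duality between $\rOmega^{p,q}(X,V^\vee)$ and $\hOmega^{d-p,d-q}(X,V)$, applied componentwise exactly as you spell out. Your additional bookkeeping (the finite trihomogeneous decomposition, the nonzero sign $(-1)^{i(p_2+q_2+1)}$ being constant on each component, and the assembly of finitely many perfect pairings) is accurate and just makes explicit what the paper leaves implicit.
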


\begin{proof}
Follows immediately from \cite[Proposition 4]{Serre} and \cite[Proposition 5]{Serre}. \qed
\end{proof}

\section{Non-degeneracy of the bulk trace}
\label{sec:bulk}

In this section, we prove non-degeneracy of the bulk trace defined in
\cite{nlg1}. The proof uses the spectral sequence results obtained in
Section \ref{sec:spec} together with an adaptation of the argument of
\cite{Serre} to the case of $\Z$-graded holomorphic vector bundles.

\subsection{The topological complex of polyvector-valued forms}

Let $(X,W)$ be a holomorphic Landau-Ginzburg pair. Consider the
cochain complex $(\PV(X),\updelta_W)$, where $\updelta_W=\bpd_{\wedge
TX}+\ioda_W$ is the twisted differential \eqref{updelta}, graded by
the total $\Z$-grading:
\be
0\longrightarrow \PV^{-d}(X)\stackrel{\updelta_W}{\longrightarrow} \PV^{-d+1}(X)\stackrel{\updelta_W}{\longrightarrow}\ldots \stackrel{\updelta_W}{\longrightarrow}\PV^{d-1}(X)\stackrel{\updelta_W}{\longrightarrow} \PV^{d}(X)\longrightarrow 0~~.
\ee
Notice that $\updelta_W$ has total degree $+1$ with respect to this
grading. Let $\PV_c(X)$ denote the subcomplex of $(\PV(X),\updelta_W)$
formed by compactly-supported forms valued in the holomorphic vector
bundle $\wedge TX$. Notice that $\ioda_W$ and $\updelta_W$ are
continuous with respect to the Fr\'echet topology, since they are
differential operators of order zero and one, respectively. In
particular, $(\PV(X),\updelta_W)$ is a finite topological cochain
complex of FS spaces, which contains $\PV_c(X)$ as a dense subcomplex.

\subsection{The topological complex of compactly-supported polyvector-valued currents}

For any $i\in \{-d,\ldots, 0\}$ and any $j\in \{0,\ldots, d\}$, let
$\hPV^{i,j}(X)\eqdef \hOmega^{0,j}(X,\wedge^{|i|} TX)$. Consider the
bigraded vector space:
\be
\hPV(X)\eqdef \bigoplus_{i=-d}^0\bigoplus_{j=0}^d \hPV^{i,j}(X)=\bigoplus_{i=-d}^0\bigoplus_{j=0}^d \hOmega^{0,j}(X,\wedge^{|i|} TX)~~.
\ee
We endow $\hPV(X)$ with its  total $\Z$-grading, which has homogeneous components:
\be
\hPV^k(X)=\bigoplus_{i+j=k}\hPV^{i,j}(X)~~.
\ee
This grading is concentrated in degrees $k\in \{-d,\ldots,d\}$. Let
$\hupdelta_W:\hPV(X)\rightarrow \hPV(X)$ be the natural extension of
$\updelta_W$ to $\hPV(X)$. Then $(\hPV(X),\hupdelta_W)$ is a topological cochain
complex of DFS spaces:
\be
0\longrightarrow \hPV^{-d}(X)\stackrel{\hupdelta_W}{\longrightarrow} \PV^{-d+1}(X)
\stackrel{\hupdelta_W}{\longrightarrow}\ldots \stackrel{\hupdelta_W}{\longrightarrow}\hPV^{d-1}(X)
\stackrel{\hupdelta_W}{\longrightarrow} \hPV^{d}(X)\longrightarrow 0~~.
\ee
\noindent 
Let $\hHPV^k(X,W)\eqdef \rH^k(\hPV(X),\hupdelta_W)$ denote the
cohomology of this cochain complex in degree $k\in \{-d,\ldots,d\}$.
Notice that $(\PV_c(X),\updelta_W)$ is naturally a subcomplex of
$(\hPV(X),\hupdelta_W)$.  As explained in \cite{nlg1}, the wedge
product induces an associative and supercommutative multiplication on
$\PV(X)$, which we denote by juxtaposition and which makes
$(\PV(X),\updelta_W)$ into a supercommutative differential graded
algebra. This multiplication operation is jointly continuous with
respect to the Fr\'echet topology, since the wedge product is.

\subsection{The canonical off-shell bulk pairing and its extension}

Let $\Omega$ be a holomorphic volume form on $X$. The {\em canonical
  off-shell bulk trace} \cite[Section 5]{nlg1} determined by $\Omega$ is
the continuous $\C$-linear map $\Tr_B:\PV_c(X)\rightarrow \C$ defined through:
\be
\Tr_B(\omega)\eqdef \int_X\Omega\wedge (\Omega\lrcorner \omega)~,~~\forall \omega\in \PV_c(X)~~,
\ee
where the integral over $X$ of a form of type $(k,l)$ is defined to
vanish unless $k=l=d$.  For simplicity, we do not indicate the
dependence of $\Omega$ in the notation $\Tr_B$. This map has bidegree
$(d,-d)$ and hence is of degree zero with respect to the total
$\Z$-grading on $\PV_c(X)$. Notice that $\Tr_B$ can be viewed as a
distribution valued in the holomorphic vector bundle $\wedge^d
\bar{T}^\ast X \otimes \wedge^d TX$. It can also be viewed as a
current of type $(d,0)$ valued in the holomorphic vector bundle
$\wedge^d TX$.

\

\begin{Definition}
The {\em canonical off-shell bulk pairing} determined by $\Omega$ is
the continuous bilinear map $\langle \cdot, \cdot \rangle_B:
\PV(X)\times \PV_c(X)\rightarrow \C$ defined through:
\be
\langle \omega, \eta\rangle_B \eqdef \Tr_B(\omega\eta)=\int_X\Omega\wedge 
[\Omega\lrcorner (\omega\eta)]~,~~\forall \omega\in \PV(X)~,~\forall \eta\in \PV_c(X)~~.
\ee
\end{Definition}

\noindent Notice that $\langle\omega, \eta \rangle_B$ is well-defined
since $\omega\eta$ belongs to $\PV_c(X)$. This pairing has degree zero 
when $\PV(X)$ and $\PV_c(X)$ are endowed with the total $\Z$-gradings. 

\

\begin{Definition}
The {\em extended canonical off-shell bulk pairing} is the continuous
bilinear map $\langle \cdot, \cdot \rangle: \PV(X)\times
\hPV(X)\rightarrow \C$ defined through:
\ben
\label{extpairing}
\langle \omega, T\rangle=\int_X\Omega\wedge [\Omega\lrcorner (\omega T)]~~,~~\forall \omega\in \PV(X)~,~\forall T\in \hPV(X)~~.
\een
\end{Definition}

\

\noindent We have $\langle \cdot, \cdot\rangle |_{\PV(X)\times
  \PV_c(X)}=\langle\cdot, \cdot\rangle_B$.  The pairing $\langle
\cdot, \cdot \rangle$ has degree zero when $\PV(X)$ and $\hPV(X)$ are
endowed with the total $\Z$-gradings.

\

\begin{Proposition}
\label{prop:pcomplexes}
The canonical off-shell bulk pairing $\langle \cdot , \cdot \rangle_B$
is a topological pairing of bounded $\Z$-graded complexes between
$(\PV(X),\updelta_W)$ and $(\PV_c(X),\updelta_W)$, while the extended
canonical off-shell bulk pairing $\langle \cdot, \cdot \rangle$ is a
topological pairing of bounded $\Z$-graded complexes between
$(\PV(X),\updelta_W)$ and $(\hPV(X),\updelta_W)$.
\end{Proposition}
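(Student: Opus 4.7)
The plan is to verify the three axioms of Definition \ref{def:top_pairing} for the extended pairing $\langle\cdot,\cdot\rangle$ on $\PV(X)\times\hPV(X)$; the statement for $\langle\cdot,\cdot\rangle_B$ then follows by restriction along the inclusion $\PV_c(X)\hookrightarrow\hPV(X)$ (every compactly supported smooth polyvector-valued form is in particular a compactly supported current).

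Joint continuity is immediate from the construction. Multiplication by a smooth polyvector-valued form defines a jointly continuous bilinear map $\PV(X)\times\hPV(X)\to\hPV(X)$ by the standard continuity of the $C^\infty$--$\cE'$ pairing in the bundle setting; the contraction $\Omega\lrcorner\cdot$ and wedging $\Omega\wedge\cdot$ are continuous $\O(X)$-linear operations; and integration of compactly supported currents of top bidegree is a continuous linear functional on $\hOmega^{d,d}(X)$. The degree-zero condition is automatic by bidegree counting: the integral $\int_X$ only sees the component of type $(d,d)$, which via $\Omega\wedge(\Omega\lrcorner\cdot)$ corresponds to the component of $\omega T$ lying in $\cA^d(X,\wedge^d TX)$; equivalently, $\Tr_B(\omega T)$ vanishes unless $\deg\omega+\deg T=0$ in the total $\Z$-grading.

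The real content is the Leibniz-type compatibility with differentials. Since each of $\bpd_{\wedge TX}$ and $\ioda_W=-\i\,\pd W\lrcorner$ is a graded derivation of the wedge product on $\PV(X)$ with respect to the total $\Z$-grading, so is $\updelta_W$, and the same derivation identity extends to the action $\PV(X)\times\hPV(X)\to\hPV(X)$. Therefore
\begin{equation*}
\langle\updelta_W\omega,T\rangle+(-1)^{\deg\omega}\langle\omega,\hupdelta_W T\rangle=\Tr_B\bigl(\updelta_W(\omega T)\bigr),
\end{equation*}
so the required identity reduces to showing $\Tr_B\circ\updelta_W=0$ on $\hPV(X)$. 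I would prove this by transporting under the $\O(X)$-linear sheaf isomorphism $\Omega\lrcorner\cdot:\wedge^j TX\stackrel{\sim}{\to}\wedge^{d-j}T^*X$, under which $\bpd_{\wedge TX}$ corresponds to $\bpd$ (since $\bpd\Omega=0$) and, by the contraction--wedge adjunction relative to $\Omega$, $\ioda_W$ corresponds to wedging with $\pm\i\,\pd W$. Accordingly,
\begin{equation*}
\Tr_B(\updelta_W\xi)=\int_X\Omega\wedge\bpd(\Omega\lrcorner\xi)\;\pm\;\i\int_X\Omega\wedge\pd W\wedge(\Omega\lrcorner\xi);
\end{equation*}
the second integrand has holomorphic bidegree $d+1$ and vanishes identically on a $d$-complex-dimensional manifold, while the first equals $\pm\int_X\bpd\bigl(\Omega\wedge(\Omega\lrcorner\xi)\bigr)$ (again by $\bpd\Omega=0$), which is zero by Stokes' theorem for compactly supported currents of top bidegree.

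The only delicate step is bookkeeping signs: one must check the contraction--wedge adjunction $\Omega\lrcorner(\pd W\lrcorner v)=\pm\,\pd W\wedge(\Omega\lrcorner v)$ and verify that $\updelta_W$ acts as a graded derivation of the correct parity with respect to the total grading; these are routine but error-prone once the conventions of \cite{nlg1} are recalled. No genuine analytic obstacle arises, because smoothness of $\omega$ and compact support of $T$ let the dimension-count and the Stokes argument proceed without any regularization.
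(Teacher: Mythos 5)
Your proof is correct and follows essentially the same route as the paper's: both reduce the compatibility with differentials, via the graded Leibniz rule for $\updelta_W$, to the identity $\Tr_B\circ\updelta_W=0$, and both treat the two pairings by essentially the same computation. The only difference is that the paper simply cites \cite[Section 5]{nlg1} for $\Tr_B\circ\updelta_W=0$, whereas you re-derive it (bidegree counting kills the $\ioda_W$ term, Stokes kills the $\bpd$ term), and you obtain the pairing on $\PV(X)\times\PV_c(X)$ by restriction from the extended one rather than directly.
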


\begin{proof} 
For $\omega\in \PV^k(X)$ and any $\eta\in \PV_c(X)$, we have:  
\be
\langle \updelta_W \omega,\eta\rangle_B+(-1)^k\langle \omega,\updelta_W\eta\rangle_B
=\Tr_B\left[(\updelta_W \omega)\eta+(-1)^{k}\omega(\updelta_W\eta)\right]=\Tr_B[\updelta_W(\omega\eta)]=0~~,
\ee
where in the last equality we used the property $\Tr_B\circ
\updelta_W=0$ (see \cite[Section 5]{nlg1}). This shows that
$\langle\cdot,\cdot\rangle_B$ is a paring of bounded complexes. Continuity of
this pairing follows from continuity of $\Tr_B$ and joint continuity
of the multiplication in $\PV(X)$. A formally similar argument shows
that $\langle\cdot, \cdot\rangle$ is a topological pairing of
bounded complexes. \qed
\end{proof} 

\

\noindent Let us view $\wedge T^\ast X$ as a $\Z$-graded holomorphic
vector bundle with $\wedge^k T^\ast X$ sitting in degree $+k$ and
$\wedge T X$ as a $\Z$-graded holomorphic vector bundle with $\wedge^k
TX$ sitting in degree $-k$.  Let $\Omega\lrcorner: \wedge T
X\rightarrow \wedge T^\ast X$ be the degree $d$ map of graded
holomorphic vector bundles given by contraction with $\Omega$. Let
$\Omega\lrcorner_0: \wedge T X\rightarrow \wedge T^\ast X$ be the map
of holomorphic vector bundles given by reduced contraction with
$\Omega$ (see \cite[Section 3]{nlg1}).  Let $\ev:=\ev_{\wedge TX}:\wedge
TX\otimes \wedge T^\ast X\rightarrow \cO_X$ denote the graded duality
morphism of the $\Z$-graded holomorphic vector bundle $\wedge TX$ (see
\eqref{ev_Q}).

\

\begin{Proposition}
\label{prop:evlr}
For any $x\in X$ and any $v_1, v_2 \in \wedge T_xX$, we have: 
\be
\Omega_x\lrcorner_0 (v_1\wedge v_2)=(-1)^{k_1 d}\ev_{\wedge TX}(x)(v_1, \Omega_x\lrcorner v_2)~~.
\ee 
\end{Proposition}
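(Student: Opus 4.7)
The identity is fiberwise and $\C$-bilinear in $v_1,v_2$, so my plan is to verify it by a direct computation in a holomorphic coordinate chart at $x$. I would choose coordinates $(z^1,\ldots,z^d)$ near $x$ in which $\Omega_x = dz^1\wedge\cdots\wedge dz^d$; up to rescaling $\Omega$, which is harmless because the identity is homogeneous of degree one in $\Omega$, this is always possible. By multilinearity, it then suffices to verify the identity when $v_1 = \partial_I$ and $v_2 = \partial_J$ for strictly increasing multi-indices $I,J\subset\{1,\ldots,d\}$ of cardinalities $k_1,k_2$, where $\partial_I \eqdef \partial_{z^{i_1}}\wedge\cdots\wedge\partial_{z^{i_{k_1}}}$ and similarly for $\partial_J$.

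If $I\cap J\neq\emptyset$ then $v_1\wedge v_2 = 0$, so the left-hand side vanishes, and on the right $\Omega_x\lrcorner v_2$ is supported on the complement $J^c = \{1,\ldots,d\}\setminus J$, so its pairing with $v_1$ (whose support meets $J$) must also vanish — both sides are zero. In the remaining case $I\cap J = \emptyset$, I would expand both sides explicitly. Using the definition of the reduced contraction $\lrcorner_0$ in \cite[Section 3]{nlg1}, the left-hand side becomes an explicit sign (depending only on $k_1,k_2,d$) times $dz^K$, where $K = \{1,\ldots,d\}\setminus(I\cup J)$ is the complementary ordered multi-index. For the right-hand side, $\Omega_x\lrcorner \partial_J$ is a shuffle sign times $dz^{J^c}$, and substituting into $\ev_{\wedge TX}$ according to \eqref{ev_Q} produces a further sign $(-1)^{-k_1}$ together with the pairing of $v_1$ against the $dz^I$-part of $dz^{J^c}$, leaving behind $dz^K$.

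The proof then reduces to matching the resulting signs. The main obstacle — and the only substantive point — is the bookkeeping: one must simultaneously track the shuffle sign in writing $\Omega_x\lrcorner\partial_J=\pm dz^{J^c}$, the factor $(-1)^{-k_1}$ from \eqref{ev_Q}, the shuffle sign arising when $v_1$ picks out the $dz^I$-component of $dz^{J^c}$, the prescribed prefactor $(-1)^{k_1 d}$, and the sign built into $\lrcorner_0$. I expect the factor $(-1)^{k_1 d}$ to be exactly what is needed to reconcile the reduced contraction (which by construction is adapted to the bulk trace of \cite{nlg1}) with the iterated contraction on the right, modulo the parity identity $k_1^2\equiv k_1\pmod 2$. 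Once the specific convention for $\lrcorner_0$ from \cite[Section 3]{nlg1} is written out and substituted, the remaining calculation is a short, mechanical permutation verification.
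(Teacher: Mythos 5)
Your coordinate strategy is viable, but as written the proof has a genuine gap: the entire content of the proposition is the sign $(-1)^{k_1 d}$, and your argument defers exactly that point. You reduce correctly to basis monomials $\partial_I,\partial_J$, dispose of the case $I\cap J\neq\emptyset$, and then enumerate the signs that must be reconciled (the shuffle sign in $\Omega_x\lrcorner\,\partial_J=\pm\, dz^{J^c}$, the factor $(-1)^{-k_1}=(-1)^{k_1}$ from \eqref{ev_Q}, the shuffle sign from extracting the $dz^I$-component of $dz^{J^c}$, the prefactor $(-1)^{k_1 d}$, and the sign built into $\lrcorner_0$) --- but you never actually compute and multiply them; ``I expect the factor $(-1)^{k_1 d}$ to be exactly what is needed'' restates the conclusion rather than deriving it. For a pure sign identity this is the whole proof, so the argument is incomplete until those permutation signs are written out explicitly.

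For comparison, the paper's proof is coordinate-free and only a few lines long, and you may find it easier to finish your argument by borrowing its key step. From \eqref{ev_Q} (with $\wedge^{k}TX$ placed in degree $-k$, so that $i=-k_1$ and the degree constraint becomes $k_1+k_2=d$) one gets $\ev_{\wedge TX}(x)(v_1,\Omega_x\lrcorner v_2)=(-1)^{k_1}\delta_{k_1+k_2,d}\,(\Omega_x\lrcorner v_2)\lrcorner v_1$; the adjunction between wedge and contraction gives $(\Omega_x\lrcorner v_2)\lrcorner v_1=\Omega_x\lrcorner(v_2\wedge v_1)$; graded commutativity contributes $(-1)^{k_1k_2}$; and the congruence $k_1(1+k_2)\equiv k_1 d \pmod 2$, valid precisely when $k_1+k_2=d$ because $k_1^2\equiv k_1$, turns the accumulated sign into $(-1)^{k_1 d}$, the Kronecker delta being accounted for by the definition of the reduced contraction $\lrcorner_0$. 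No frame, no shuffle signs. If you prefer to stay in coordinates, the one identity you genuinely need to verify there is the adjunction $(\Omega_x\lrcorner v_2)\lrcorner v_1=\Omega_x\lrcorner(v_2\wedge v_1)$; once that is in hand your remaining bookkeeping collapses to the same mod-2 arithmetic.
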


\begin{proof}
Suppose that $v_1\in \wedge^{k_1} T_xX$ and $v_2\in \wedge^{k_2} T_xX$. Then:
\beqa
\ev_{\wedge TX}(x)(v_1, \Omega_x\lrcorner v_2)&=&(-1)^{k_1}\delta_{k_1+k_2,d}(\Omega_x\lrcorner v_2)\lrcorner v_1=(-1)^{k_1}\delta_{k_1+k_2,d}
\Omega_x\lrcorner (v_2\wedge v_1)=\nn\\
&=&(-1)^{k_1+k_1k_2}\delta_{k_1+k_2,d}\Omega_x\lrcorner (v_1\wedge v_2)=(-1)^{k_1 d}\Omega_x\lrcorner_0 (v_1\wedge v_2)~~.\qed
\eeqa
\end{proof}

\

\noindent Contraction and wedge product with $\Omega$ induce topological isomorphisms:
\beqan
\label{top_isoms}
&&\Omega\lrcorner: \hPV(X)\rightarrow \hOmega^{0,\bullet}(X,\wedge T^\ast X)~~,\nn\\
&&\Omega\wedge: \PV(X)\rightarrow \rOmega^{d,\bullet}(X, \wedge T X)~~.
\eeqan

\begin{Proposition}
\label{prop:pairings}
For any $\Z$-homogeneous element $\omega\in \PV(X)$ and any $T\in \hPV(X)$, we have: 
\ben
\label{ident}
\langle \omega, T\rangle=(-1)^{d\, \deg \omega}\S_{\wedge TX}(\Omega\wedge \omega,\Omega\lrcorner T)~~.
\een
\end{Proposition}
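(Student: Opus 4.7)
The plan is to reduce \eqref{ident} to a pointwise identity on pure tensors and then verify the matching sign $(-1)^{d\,\deg\omega}$ by a parity computation. Both sides of \eqref{ident} are jointly continuous bilinear maps on $\PV(X) \times \hPV(X)$: the left-hand side by Proposition~\ref{prop:pcomplexes} combined with continuity of $\Omega \wedge$ and $\Omega \lrcorner$, the right-hand side by Lemma~\ref{lemma:Serre} combined with the topological isomorphisms \eqref{top_isoms}. By density it therefore suffices to verify the identity when $\omega = \eta_1 \otimes v_1$ and $T = \eta_2 \otimes v_2$ are pure tensors with $\eta_j$ a scalar-valued $(0,q_j)$-form (respectively compactly-supported current) and $v_j$ a local section of $\wedge^{k_j} TX$.

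A bidegree count then shows that both sides vanish unless $k_1 + k_2 = d$ and $q_1 + q_2 = d$: the first constraint is enforced on the right by the Kronecker delta $\updelta_{i+j,0}$ (with $i = -k_1$ and $j = d - k_2$) in the definition of $\cS_Q$, while both constraints are required on the left so that the integrand $\Omega \wedge [\Omega \lrcorner (\omega T)]$ has type $(d,d)$. Restricting to this regime, I would expand the left-hand side using the Koszul rule on $\PV(X)$ (yielding a sign $(-1)^{k_1 q_2}$ from commuting $v_1$ past $\eta_2$) and then applying $\Omega \lrcorner$ to the top-rank polyvector $v_1 \wedge v_2 \in \wedge^d TX$, which produces a scalar. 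I would expand the right-hand side analogously by applying the defining formula for $\cS_{\wedge TX}$ with $\omega_1 = \Omega \wedge \eta_1$, $v = v_1$, $\omega_2 = \eta_2$ and $w = \Omega \lrcorner v_2$. Both sides then reduce to scalar multiples of the common integral $\int_X \Omega \wedge \eta_1 \wedge \eta_2$; the scalar factors are $\Omega \lrcorner (v_1 \wedge v_2)$ on the left and $(\Omega \lrcorner v_2)(v_1)$ on the right.

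These two scalars are related through the iterated contraction identity $(\Omega \lrcorner v_2)\lrcorner v_1 = \Omega \lrcorner (v_2 \wedge v_1) = (-1)^{k_1 k_2}\,\Omega \lrcorner (v_1 \wedge v_2)$, which is the essence of Proposition~\ref{prop:evlr} in the top-rank case where $\Omega \lrcorner_0$ and $\Omega \lrcorner$ agree. Substituting this relation reduces the proof to a purely algebraic parity check in $\Z/2$: one assembles the Koszul sign $(-1)^{k_1 q_2}$, the factor $(-1)^{i(p_2 + q_2 + 1)}$ from the definition of $\cS_Q$ and the swap sign $(-1)^{k_1 k_2}$, then substitutes $k_1 + k_2 = d$ and $q_1 + q_2 = d$ to verify that the total sign collapses modulo $2$ to $(-1)^{d(q_1 - k_1)} = (-1)^{d\,\deg\omega}$.

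The hard part will be this final parity bookkeeping: each ingredient contributes its own sign, the dimension constraints are only mod-$2$ relations, and a single miscount at any stage propagates through the remainder of the calculation. Once it is executed correctly, however, no geometric input beyond the two topological isomorphisms of \eqref{top_isoms}, Lemma~\ref{lemma:Serre} and Proposition~\ref{prop:evlr} is required.
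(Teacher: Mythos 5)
Your proposal follows essentially the same route as the paper's proof: reduction to pure tensors, the two degree constraints $k_1+k_2=d$ and $q_1+q_2=d$, Proposition~\ref{prop:evlr} to convert $\ev_{\wedge TX}(v_1,\Omega\lrcorner v_2)$ into $\Omega\lrcorner_0(v_1\wedge v_2)$ (which coincides with $\Omega\lrcorner(v_1\wedge v_2)$ in top rank), and a final mod-$2$ check that the accumulated sign equals $(-1)^{d\,\deg\omega}$. The only caution is that your inventory of signs omits the factor $(-1)^{q_2 d}$ incurred when commuting the contraction with the $(d,0)$-form $\Omega$ past the $(0,q_2)$-current factor of $T$ (the paper records this as the first step of its computation); once that is included the parity bookkeeping closes exactly as you describe.
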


\begin{proof}
It suffices to consider the case $\omega=\alpha\otimes v$ and
$T=\beta\otimes w$ with $\alpha \in \rOmega^{0,p}(X)$, $\beta\in
\hOmega^{0,q}(X)$, $v\in \rGamma_\sm(X,\wedge^{i} TX)$ and $w\in
\rGamma_\sm(X,\wedge^{j} TX)$. Then both sides of \eqref{ident} vanish
unless $p+q=d$ and $i+j=d$. When these conditions are satisfied,
we have:
\beqa
&& \S_{\wedge TX}(\Omega\wedge (\alpha\otimes v), \Omega\lrcorner(\beta\otimes w))=
(-1)^{q d}\S_{\wedge TX}((\Omega\wedge \alpha)\otimes v, \beta\otimes (\Omega\lrcorner w))=\nn\\
&=& (-1)^{q j}\int_X (\Omega\wedge \alpha \wedge \beta) \, \ev_{\wedge TX}(v,\Omega\lrcorner w)=
(-1)^{q j+d i}\int_X \Omega\wedge (\alpha\wedge \beta) \wedge \Omega\lrcorner_0(v\wedge w)=\nn\\
&=&(-1)^{q j+d^2+d i}\!\int_X\!\!\Omega\wedge \Omega\lrcorner_0((\alpha\wedge\beta)\otimes (v\wedge w))=
(-1)^{p j+qi}\!\int_X \!\!\Omega\wedge \Omega\lrcorner_0(\omega \wedge \eta)=(-1)^{d\, \deg \omega}\langle \omega, T\rangle~~
\eeqa
where we used Proposition \ref{prop:evlr} and noticed that:\footnote{We use the notation $\equiv_2$ for congruence modulo 2.} 
\be
q j+d(d+i)\equiv_2 (q+d)j\equiv_2 pj~~,~~pj+qi= p(d-i)+(d-p)i\equiv_2 d(p+i)\equiv_2 d\, \deg \omega~~. \qed
\ee
\end{proof}

\

\begin{Proposition}
\label{prop:extpairing}
The extended canonical off-shell bulk pairing $\langle\cdot, \cdot
\rangle$ is a perfect topological pairing between the topological
complexes $(\PV(X),\updelta_W)$ and $(\hPV(X),\hupdelta_W)$.
\end{Proposition}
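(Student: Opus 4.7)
The plan is to transport perfectness of the graded Serre pairing (Lemma \ref{lemma:Serre}) across the topological isomorphisms of \eqref{top_isoms}, using the identification provided by Proposition \ref{prop:pairings}. Proposition \ref{prop:pcomplexes} has already shown that $\langle\cdot,\cdot\rangle$ is a topological pairing of bounded complexes between $(\PV(X),\updelta_W)$ and $(\hPV(X),\hupdelta_W)$, so the remaining work is to verify that its restriction to $\PV^k(X)\times \hPV^{-k}(X)$ is a perfect topological pairing of topological vector spaces for every $k\in\Z$.

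First I would use the identity
\be
\langle\omega,T\rangle = (-1)^{d\,\deg\omega}\,\S_{\wedge TX}(\Omega\wedge \omega,\Omega\lrcorner T)
\ee
from Proposition \ref{prop:pairings}, combined with the fact that the maps $\Omega\wedge:\PV(X)\to \rOmega^{d,\bullet}(X,\wedge TX)$ and $\Omega\lrcorner:\hPV(X)\to \hOmega^{0,\bullet}(X,\wedge T^\ast X)$ from \eqref{top_isoms} are topological isomorphisms onto (closed) subspaces of the full spaces appearing in Lemma \ref{lemma:Serre}. This expresses $\langle\cdot,\cdot\rangle$ as the pullback, up to the degree-dependent sign $(-1)^{d\,\deg\omega}$, of the restriction of $\S_{\wedge TX}$ to the pair of subspaces $\rOmega^{d,\bullet}(X,\wedge TX)$ and $\hOmega^{0,\bullet}(X,\wedge T^\ast X)$.

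Next I would argue that this restriction of $\S_{\wedge TX}$ is itself perfect. Since $\S_{\wedge TX}$ has tridegree $(-d,-d,0)$, it decomposes as a direct sum of pairings between tridegree-matched pieces; the only pieces of $\rOmega^{d,\bullet}(X,\wedge TX)$ and $\hOmega^{0,\bullet}(X,\wedge T^\ast X)$ which contribute non-trivially are $\rOmega^{d,i}(X,\wedge^j TX)$ against $\hOmega^{0,d-i}(X,\wedge^j T^\ast X)$, on which $\S_{\wedge TX}$ reduces (up to a sign) to the classical Serre pairing of the holomorphic vector bundle $\wedge^j TX$, which is perfect by \cite{Serre}. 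Setting $i' = d-j$ and $j' = d-i$, one checks that this summand decomposition, once intersected with the total-degree constraints $i-j=k$ on the left and $i'-j'=k$ on the right, matches summand-by-summand, so perfectness is preserved in each total degree.

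To finish, I would observe that the left and right Riesz morphisms of $\langle\cdot,\cdot\rangle|_{\PV^k(X)\times\hPV^{-k}(X)}$ factor as a topological isomorphism ($\Omega\wedge$ or $\Omega\lrcorner$) composed with the Riesz morphisms of the (restricted) Serre pairing and with the transpose of another such isomorphism, multiplied by the scalar $(-1)^{dk}$; they are therefore themselves topological isomorphisms. The main obstacle is the degree bookkeeping in the preceding paragraph --- verifying that the constraints $i+j'=d$ and $j+i'=d$ imposed by the tridegree of $\S_{\wedge TX}$ collapse consistently with $i-j=i'-j'=k$, so that each summand of $\PV^k(X)$ pairs perfectly with a unique Serre-dual summand of $\hPV^{-k}(X)$; once this combinatorial matching is verified, the proposition follows formally from Lemma \ref{lemma:Serre} and the topological isomorphisms \eqref{top_isoms}.
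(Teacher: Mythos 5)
Your proposal is correct and follows essentially the same route as the paper's proof: both reduce perfectness of $\langle\cdot,\cdot\rangle$ to Lemma \ref{lemma:Serre} for $Q=\wedge TX$ via the identity of Proposition \ref{prop:pairings} and the topological isomorphisms \eqref{top_isoms}, together with Proposition \ref{prop:pcomplexes} for the pairing-of-complexes property. Your version merely makes explicit the tridegree bookkeeping (matching $\rOmega^{0,i}(X,\wedge^j TX)$ with $\hOmega^{0,d-i}(X,\wedge^{d-j}TX)$ in complementary total degrees) that the paper leaves implicit, and that bookkeeping checks out.
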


\begin{proof}
The fact that $\langle \cdot, \cdot\rangle$ is a pairing of complexes
follows from Proposition \ref{prop:pcomplexes}. The fact that it is a
topological pairing follows from Proposition \ref{prop:pairings} and Lemma
\ref{lemma:Serre} applied to the graded holomorphic vector bundle
$Q:=\wedge TX$, using the fact that operations \eqref{top_isoms} are
topological isomorphisms and the existence of the natural isomorphism of holomorphic
vector bundles $Q^\vee\simeq \wedge T^\ast X$. \qed
\end{proof}

\subsection{Non-degeneracy of the cohomological bulk pairing}

\

\

\begin{Lemma}
\label{lemma:bulk}
Suppose that the critical set $Z_W$ is compact. Then $\HPV^k(X,W)$ and
$\hHPV^k(X,W)$ are finite-dimensional for every $k\in \{-d,\ldots,
d\}$ and the extended canonical off-shell bulk pairing $\langle \cdot,
\cdot \rangle$ determined by any holomorphic volume form $\Omega$ 
is cohomologically perfect. In particular, we have natural isomorphisms 
of $\C$-vector spaces:
\be
\hHPV^k(X,W)\simeq_\C \HPV^{-k}(X,W)^\vee~~,~~\forall k\in \{-d,\ldots, d\}~~
\ee
which depend only on $\Omega$. 
\end{Lemma}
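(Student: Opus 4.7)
The plan is to deduce the lemma as a formal consequence of three already-established ingredients: (i) the perfect topological pairing of complexes provided by Proposition \ref{prop:extpairing}; (ii) the finite-dimensionality criterion of Corollary \ref{cor:perf_criterion}; and (iii) the known fact from \cite{nlg1} that compactness of $Z_W$ implies finite-dimensionality of $\HPV^k(X,W)$ for every $k \in \{-d, \ldots, d\}$.

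First, I would invoke (iii) to record that $\HPV^k(X,W)$ is finite-dimensional over $\C$ for all $k$. Next, Proposition \ref{prop:extpairing} ensures that $\langle \cdot, \cdot \rangle$ is a perfect topological pairing between the bounded topological cochain complexes $(\PV(X), \updelta_W)$, consisting of FS spaces, and $(\hPV(X), \hupdelta_W)$, consisting of DFS spaces. Applying Corollary \ref{cor:perf_criterion} with $(F, \updelta) := (\PV(X), \updelta_W)$ and $(\hF, \hupdelta) := (\hPV(X), \hupdelta_W)$ then immediately yields that $\hHPV^k(X,W)$ is finite-dimensional for every $k$ and that the pairing $\langle \cdot, \cdot \rangle$ is cohomologically perfect.

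Finally, the claimed $\C$-linear isomorphism will follow directly from the definition of cohomological perfectness. The induced pairing $\langle \cdot, \cdot \rangle^H$ restricts for each $k$ to a perfect topological pairing $\HPV^{-k}(X,W) \times \hHPV^k(X,W) \rightarrow \C$, whose right Riesz morphism identifies $\hHPV^k(X,W)$ topologically with $\HPV^{-k}(X,W)^\ast$. Since both sides are finite-dimensional, the strong topological dual coincides with the algebraic dual, producing the desired isomorphism $\hHPV^k(X,W) \simeq_\C \HPV^{-k}(X,W)^\vee$. The isomorphism depends only on $\Omega$ because the extended off-shell pairing \eqref{extpairing} is itself built from $\Omega$.

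Since all of the substantive work has been accumulated beforehand, there is essentially no obstacle in the present step; the main conceptual difficulty---establishing perfectness of the extended off-shell bulk pairing---has already been overcome in Proposition \ref{prop:extpairing} via the sign identification of Proposition \ref{prop:pairings} and the perfectness of the graded Serre pairing (Lemma \ref{lemma:Serre}). The only technical point to watch is keeping track of the index reversal $k \leftrightarrow -k$ induced by the degree-zero pairing on the total $\Z$-grading, but this is purely bookkeeping.
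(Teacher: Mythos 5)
Your proposal is correct and follows essentially the same route as the paper: the paper's proof also records finite-dimensionality of $\HPV^k(X,W)$ from \cite{nlg1} and then cites Proposition \ref{prop:extpairing} together with Proposition \ref{prop:fdDual} (of which Corollary \ref{cor:perf_criterion}, which you invoke, is the immediate packaged consequence). The concluding identification of the strong dual with the algebraic dual in finite dimensions is exactly the observation already built into the proof of Proposition \ref{prop:fdDual}, so nothing is missing.
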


\begin{proof}
Since $Z_W$ is compact, the cohomology
$\HPV^k(X,W\!)\!=\!\rH^k(\PV(X),\updelta_W\!)$ is finite-dimensional for every
$k$ as shown in \cite[Proposition 3.4]{nlg1}. The remaining statements
follow from Proposition \ref{prop:extpairing} and Proposition
\ref{prop:fdDual}. \qed
\end{proof}

\begin{remark}
Any two holomorphic volume forms $\Omega$ and $\Omega'$ on $X$ are related by: 
\be
\Omega'=f\Omega~~,
\ee
where $f:X\rightarrow \C^\times$ is a nowhere-vanishing holomorphic
function\footnote{Notice that such a function need not be constant
  since $X$ is non-compact.}  , i.e. an element of the group of units
$\O(X)^\times$ of the commutative ring $\O(X)$. If $\Tr'_B$ denotes
the canonical off-shell bulk trace determined by $\Omega'$, this
relation gives:
\be
\Tr'_B(\omega)=\int_X f^2\Omega\wedge (\Omega\lrcorner \omega)=\Tr_B(f^2\omega)~~,~\forall \omega\in \PV(X)~~.
\ee
Since $\updelta_W$ is $\O(X)$-linear, the space $\HPV(X,W)$ is a
$\Z$-graded $\O(X)$-module (which is finite-dimensional when $Z_W$ is
compact). Since $f$ is holomorphic, multiplication with $f^2$ commutes
with $\updelta_W$ and hence descends to an $\O(X)$-linear endomorphism
of the graded $\O(X)$-module $\HPV(X,W)$. We thus have the {\em squaring actions}:
\be
\sq_k:\O(X)^\times \rightarrow \Aut_{\O(X)}(\HPV^k(X,W))~,~~\forall k\in \{-d,\ldots, d\}~~
\ee
defined by $\sq_k(f)([\omega])\eqdef [f^2\omega]$ (where $[\omega]$
denotes the $\updelta_W$-cohomology class of a $\updelta_W$-closed
element of $\PV^k(X,W)$) and the cohomological bulk traces determined
by $\Omega$ and $\Omega'$ are related through:
\be
\Tr'=\Tr\circ \sq_0(f^2)~~.
\ee
\end{remark}

\

\noindent Let $s:\PV_c(X)\rightarrow \hPV(X)$ be the inclusion map and
$s_\ast:\HPV_c(X,W)\rightarrow \hHPV(X,W)$ be the linear map induced
by $s$ on cohomology. Recall the sheaf Koszul complex \eqref{cKdef}. 

\

\begin{Proposition}
\label{prop:ssbulk}
Suppose that the critical set $Z_W$ is compact. Then $\HPV_c(X,\!W\!)$ and
$\hHPV(X,\!W\!)$ are finite-dimensional over $\C$ and $s_\ast$ is an
isomorphism of $\C$-vector spaces. Moreover, for any $k\in
\{-d,\ldots, d\}$, we have a natural isomorphism of vector spaces:
\be
\hHPV^k(X,W)\simeq_\C \H^k_c(\cK_W)~~,
\ee
where $\H_c(\cK_W)$ denotes compactly-supported hypercohomology of $\cK_W$. 
\end{Proposition}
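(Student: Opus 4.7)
The plan is to combine the spectral sequence comparison from Theorem \ref{T10} with the standard interpretation of compactly-supported hypercohomology via Dolbeault-type resolutions. I would view both $\PV_c(X)$ and $\hPV(X)$ as bounded double complexes in the sense of Section \ref{sec:spec}, with vertical differential $\dd_1 = \bpd_{\wedge TX}$ and horizontal differential $\dd_2 = \ioda_W$. Explicitly, for $p \in \{-d,\ldots,0\}$ and $q \in \{0,\ldots,d\}$, I set $\PV_c^{p,q}(X) \eqdef \cA^{0,q}_c(X, \wedge^{-p} TX)$ and $\hPV^{p,q}(X) \eqdef \hOmega^{0,q}(X, \wedge^{-p} TX)$. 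Both bigradings are concentrated in a finite rectangle, so both conditions A. and B. of Proposition \ref{pro8} are satisfied. The inclusion $s: \PV_c(X) \hookrightarrow \hPV(X)$ is an injective morphism of double complexes, since it commutes with $\bpd_{\wedge TX}$ (by definition of $\bpd$ on currents) and with $\ioda_W$ (a zeroth-order bundle operator).

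The next step would be to show that $s$ induces isomorphisms on vertical cohomology $\rH^{p,q}_{\dd_1}$. For each fixed $p$, the vertical cohomology of $\PV_c^{p,\bullet}$ is the compactly-supported Dolbeault cohomology $\H^q_c(X, \wedge^{-p} TX)$, and the vertical cohomology of $\hPV^{p,\bullet}$ is the same group computed via compactly-supported currents. Both the sheaf of smooth $(0,\bullet)$-forms and the sheaf of $(0,\bullet)$-currents with values in $\wedge^{-p} TX$ are fine (hence $c$-soft) resolutions of the coherent sheaf $\wedge^{-p} TX$, so taking compactly-supported global sections in each case produces a complex whose cohomology is $\H^q_c(X, \wedge^{-p} TX)$. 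The inclusion $s$ restricts to the natural inclusion of compactly-supported smooth forms into compactly-supported currents, which consequently induces the identity on this cohomology.

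Applying Theorem \ref{T10} to $\tau = s$, I would conclude that $s_\ast$ induces isomorphisms on the associated gradeds of the filtrations $\bF^p$ and $\widehat{\bF}^p$. Since both bigradings are bounded, both filtrations are finite in each total degree, so isomorphism on associated gradeds forces $s_\ast: \HPV_c(X,W) \rightarrow \hHPV(X,W)$ to be itself an isomorphism of $\Z$-graded $\C$-vector spaces. Finite-dimensionality of $\HPV_c(X,W)$ then follows from that of $\hHPV(X,W)$ established in Lemma \ref{lemma:bulk}.

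Finally, to identify $\hHPV^k(X,W)$ with $\H^k_c(\cK_W)$, I would note that the double complex $\PV_c(X)$ with differentials $(\bpd_{\wedge TX}, \ioda_W)$ is exactly the total complex associated to the compactly-supported Dolbeault resolution of the sheaf Koszul complex $\cK_W$; standard hyperderived-functor theory (using that $\cA^{0,\bullet}_c(X,\cdot\,)$ provides a $c$-soft resolution of each $\wedge^j TX$) yields $\HPV_c^k(X,W) \simeq_\C \H^k_c(\cK_W)$. Composing with the inverse of $s_\ast$ then gives the required isomorphism $\hHPV^k(X,W) \simeq_\C \H^k_c(\cK_W)$. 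The main obstacle will be the justification of the vertical quasi-isomorphism, namely the classical fact that compactly-supported currents and compactly-supported smooth forms compute the same compactly-supported Dolbeault cohomology; I would invoke this from the literature on sheaf cohomology and currents on complex manifolds.
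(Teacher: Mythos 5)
Your argument is correct, but it is organized differently from the paper's own proof. The paper proves the whole proposition in one stroke by a purely sheaf-theoretic observation: both $(\PV_c(X),\updelta_W)$ and $(\hPV(X),\hupdelta_W)$ are complexes of \emph{fine} (hence $\rGamma$-acyclic) sheaves which resolve the Koszul complex $\cK_W$, so by the standard theory of hypercohomology via acyclic resolutions both complexes compute $\H_c(\cK_W)$, yielding $\HPV^k_c(X,W)\simeq_\C\H^k_c(\cK_W)\simeq_\C \hHPV^k(X,W)$ and therefore the bijectivity of $s_\ast$ simultaneously. You instead first establish that $s_\ast$ is an isomorphism by feeding the inclusion $s$ into the spectral-sequence comparison result (Theorem \ref{T10}), using the classical fact that compactly-supported smooth forms and compactly-supported currents compute the same Dolbeault cohomology of each $\wedge^{j}TX$ as the input on vertical cohomology, and only afterwards invoke the acyclic-resolution argument to identify $\HPV^k_c(X,W)$ with $\H^k_c(\cK_W)$. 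The paper explicitly records your spectral-sequence route as an alternative in the remark immediately following the proposition, so nothing is lost; both arguments ultimately rest on the same analytic input (forms and currents both furnish fine resolutions of the coherent sheaves $\wedge^{j}TX$). The paper's version is shorter and gives the hypercohomology identification for free; yours makes the mechanism by which the two total cohomologies are compared more explicit and exercises Theorem \ref{T10}, which is in any case indispensable for the boundary analogue (where the relevant complexes are $2$-periodic and the naive first-quadrant convergence argument is unavailable). One small point worth stating explicitly: Theorem \ref{T10} gives isomorphisms only on the associated gradeds of the induced filtrations, so to conclude that $s_\ast$ itself is bijective you should note that the filtrations are finite in each total degree and run the standard induction (or five-lemma argument) on the filtration steps; your phrase ``forces $s_\ast$ to be itself an isomorphism'' is correct but deserves this one-line justification.
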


\begin{proof}
Since the category of all sheaves is Abelian with enough injectives
and $\cK_W$ is a finite complex, we can define the hypercohomology
$\H_c(\cK_W)$ (see \cite[Proposition 8.6]{Voisin}). In fact,
hypercohomology can be computed by using $\rGamma$-acyclic resolutions
(see \cite[Proposition 8.12]{Voisin}). Both $(\PV_c(X),\updelta_W)$
and $(\widehat{\PV}(X),\hupdelta_W)$ are complexes of fine (and thus
$\rGamma$-acyclic) sheaves (see \cite[Proposition 4.36]{Voisin}). Since
they both give $\rGamma$-acyclic resolutions of $\cK_W$, we have
isomorphisms of vector spaces:
\be
\HPV^k_c(X,W)\simeq_\C\H^k_c(\cK_W)\simeq_\C \hHPV^k(X,W)~~,
\ee
which gives the conclusion. \qed
\end{proof}

\begin{remark}
The natural isomorphism $\HPV^k_c(X,W)\simeq_\C \hHPV^k(X,W)$ also
follows directly by applying Theorem \ref{T10} to the inclusion map
$s$.  Indeed, the double complexes $(\HPV_c(X,W),\bpd,\ioda_W)$ and
$(\hHPV(X,W),\bpd,\ioda_W)$ satisfy condition A. of Section \ref{sec:spec}. The
columns of the zeroth page of the associated spectral sequence compute
the Dolbeault cohomology of the nodes of $\cK_W$, so the conditions
$\rH^{r,s}_{\dd_1}(\PV_c(X))\simeq \rH^{r,s}_{\dd_1}(\widehat{\PV}(X))$ are
satisfied for all $r,s$.  Now Theorem \ref{T10} implies:
\begin{equation}
\gr^p_\bF\,\HPV^k_c(X,W) \simeq \gr^p_\hbF\, \hHPV^k(X,W)~, ~~\forall p,k\in \Z~~.
\end{equation}
This gives the desired isomorphism $\HPV^k_c(X,W)\simeq_\C \hHPV^k(X,W)$ since 
any extension of vector spaces splits. 
\end{remark}

\subsection{Proof of Theorem A}

\

\begin{proof}
By Proposition \ref{prop:ssbulk}, the inclusion $s:\PV_c(X)\hookrightarrow
\hPV(X)$ is a morphism of complexes which induces an isomorphism
$s_\ast:\HPV_c(X,W)\stackrel{\sim}{\rightarrow} \hHPV(X,W)$. Since
$\langle \omega, \eta\rangle_B=\langle \omega, s(\eta)\rangle$ for all
$\omega\in \PV(X)$ and all $\eta\in \PV_c(X)$, we have $\langle u_1,
u_2\rangle_B^H=\langle u_1, s_\ast(u_2)\rangle^H$ for all $u_1\in \HPV(X,W)$
and all $u_2\in \HPV_c(X,W)$. Since $\langle \cdot, \cdot \rangle^H$ is
non-degenerate by Lemma \ref{lemma:bulk} and $s_\ast$ is injective, 
the pairing $\langle \cdot, \cdot
\rangle_B^H:\HPV(X,W)\times \HPV_c(X,W)\rightarrow \C$ induced by
$\langle\cdot, \cdot \rangle_B$ is also non-degenerate. On the other
hand, we have $\langle u_1, u_2\rangle_c=\langle i_\ast(u_1),u_2\rangle_B^H$ for
all $u_1,u_2\in \HPV_c(X,W)$, where $i_\ast:\HPV_c(X,W)\rightarrow
\HPV(X,W)$ is the map induced by the inclusion $i:\PV_c(X)\rightarrow
\PV(X)$ and $\langle~,~\rangle_c$ is the pairing induced by $\Tr_B$ on cohomology 
(see \cite[Proposition 5.4]{nlg1}). Since $Z_W$ is compact, the map $i_\ast$ is an isomorphism by
\cite[Proposition 3.7]{nlg1}. This shows that the pairing $\langle
\cdot, \cdot \rangle_c:\HPV_c(X,W)\times \HPV_c(X,W)\rightarrow \C$
is non-degenerate. By the results of \cite{nlg1}, we also have $\langle u_1,
u_2\rangle_c=\langle i_\ast(u_1),i_\ast(u_2)\rangle_\Omega$ for all $u_1,u_2\in
\HPV_c(X,W)$, which shows that $\langle \cdot, \cdot \rangle_\Omega$ is
non-degenerate since $i_\ast$ is bijective. \qed
\end{proof}

\subsection{Proof of Theorem A$'$}

\

\begin{proof}
The isomorphism $\HPV^k(X,W)\simeq \H^k(\cK_W)$ was proved in
\cite[Proposition 4.1]{nlg2}. Since $Z_W$ is compact, we also have an
isomorphism $\HPV_c^k(X,W)\simeq \HPV^k(X,W)$ which follows from
\cite[Proposition 3.7]{nlg1}. The isomorphism $\HPV^k(X,W)\simeq
\hHPV^{-k}(X,W)$ follows from Lemma \ref{lemma:bulk}.\qed
\end{proof}

\section{Non-degeneracy of the boundary traces}
\label{sec:boundary}

\noindent Let $\Omega$ be a holomorphic volume form on $X$. 
For any holomorphic factorization $a\!=\!(\!E\!,\!D)$ of $W\!$, 
the {\em canonical boundary trace}
$\tr_a^B\!:\!\cA_c(\!X,\!End(E))\!\rightarrow \C$ is defined by
(see \cite{nlg1}):
\be
\tr_a^B(\alpha)\eqdef \int_X \Omega\wedge \str_E(\alpha)~~,
~~\forall \alpha\in \cA_c(X,End(E))~~.
\ee
We have the canonical off-shell boundary paring: 
\be
\langle \alpha,\beta\rangle_{E_1,E_2}^B=\tr_{E_2}(\alpha\beta)~~,~
~\forall \alpha\in \cA(X,Hom(E_1,E_2))~,~~\forall\beta\in \cA_c(X,Hom(E_2,E_1))~~.
\ee

\subsection{Extended and restricted boundary pairings}

\noindent Let $E$ be a holomorphic vector superbundle on $X$. Let
$\str_E:End(E)\rightarrow \cO_X$ denote the morphism of holomorphic
vector bundles given by the fiberwise supertrace of $E$ (see
\cite[Section 4]{nlg1}). This induces a $\cinf$-linear map
$\str_E:\rOmega(X,End(E))\rightarrow \rOmega(X)$ (called the {\em
  extended supertrace}) which is determined uniquely by the condition:
\be
\str_E(\omega\otimes f)=\str_E(f) \omega~,~~\forall \omega\in \rOmega(X)~,~~\forall f\in \rGamma_\sm(X,End(E))~~.
\ee

\noindent Let $E_1$ and $E_2$ be two holomorphic vector superbundles
defined on $X$.

\

\begin{Definition}
The {\em extended boundary pairing} of $(E_1,E_2)$
is the continuous bilinear map
\be
\langle \cdot, \cdot
\rangle_{E_1,E_2}:\cA(X,Hom(E_1,E_2))\times \hcA(X,Hom(E_2,E_1))
\rightarrow \C
\ee
defined through:
\be
\langle \alpha,T\rangle_{E_1,E_2} \eqdef \int_X\Omega\wedge \str_{E_2}(\alpha T)~,~~\forall \alpha\in \cA(X,Hom(E_1,E_2))  ~,~~\forall T\in \hcA(X,Hom(E_2,E_1)) ~~.
\ee
The
restriction $\langle \cdot, \cdot
\rangle_{c,\,E_1,E_2}:\cA_c(X,Hom(E_1,E_2))\times
\cA_c(X,Hom(E_2,E_1))\rightarrow \C$ of $\langle \cdot, \cdot
\rangle_{E_1,E_2}$ is called the {\em restricted boundary pairing} of $(E_1,E_2)$.
\end{Definition}

\subsection{Relation to the Serre pairing of $Hom(E_1,E_2)$}

\noindent Let $V\eqdef Hom(E_1,E_2)$ be a $\Z_2$-graded
vector bundle whose homogeneous components are given by
$V^\0=Hom^\0(E_1,E_2)$ and $V^\1=Hom^\1(E_1,E_2)$. Since $V\simeq
E_1^\vee\otimes E_2$, we have $V^\vee\simeq E_2^\vee \otimes E_1\simeq
Hom(E_2,E_1)$. This allows us to relate the Serre pairing of
$Hom(E_1,E_2)$ to the extended boundary pairing of $(E_1,E_2)$. For
this, notice that the isomorphism $V^\vee\simeq Hom(E_2,E_1)$ can be
constructed explicitly using the morphism of holomorphic vector
bundles given by the fiberwise supertrace
$\str_{E_2}:End(E_2)\rightarrow \cO_X$.  Indeed, this induces an
isomorphism of $\Z_2$-graded holomorphic vector bundles
$\hsigma_{E_1,E_2}:Hom(E_2,E_1)\rightarrow Hom(E_1,E_2)^\vee$ which is
defined through:
\be
\hsigma_{E_1,E_2}(x)(g_x)(f_x)\eqdef \str_{E_{2,x}}(f_x\circ g_x)~~,~~\forall x\in X~,~~\forall f_x\in \Hom(E_{1,x}, E_{2,x})~,~~\forall g_x\in \Hom(E_{2,x}, E_{1,x})~.
\ee
For any $f_x\in \Hom(E_{1,x}, E_{2,x})$ and any $g_x\in \Hom(E_{2,x},E_{1,x})$, we have:
\ben
\label{evsigma}
\ev_{Hom(E_1,E_2)}(x)\big(f_x,\hsigma_{E_1,E_2}(x)(g_x)\big)=\str_{E_{2,x}}(f_x\circ g_x)~~,
\een
where $\ev$ is the graded duality pairing defined in \eqref{ev_Q}.  The even map
$\hsigma_{E_1,E_2}$ induces a topological isomorphism:
\ben
\label{tis1}
\sigma_{E_1,E_2}:\hOmega(X,Hom(E_2,E_1))\stackrel{\sim}{\longrightarrow} \hOmega\big(X,Hom(E_1,E_2)^\vee\big)
\een 
which is determined uniquely by the condition: 
\be
\sigma_{E_1,E_2}(\omega\otimes g)=\omega\otimes \hsigma_{E_1,E_2}(g)~~
\ee
for all $\omega\in \rOmega(X)$ and all $g\in
\rOmega^0(X,\!Hom(E_1,E_2))$. This isomorphism preserves the rank
bigrading as well as the bundle grading of the space
$\rOmega(X,Hom(E_1,E_2))$. Recall the map $S_Q$ defined in
\eqref{BQdef}.

\

\begin{Proposition}
\label{prop:SerreSigma} 
For any $\alpha\in \rOmega(X,Hom(E_1,E_2))$
and any $T\in \hOmega(X,Hom(E_2,E_1))$, we have:
\be
S_{Hom(E_1,E_2)}(\alpha,\sigma_{E_1,E_2}(T))=\str_{E_2}(\alpha T)~~.
\ee
In particular, the Serre pairing of the $\Z_2$-graded vector bundle
$Hom(E_1,E_2)$ satisfies:
\be
\S_{Hom(E_1,E_2)}(\alpha,\sigma_{E_1,E_2}(T))=\int_X \str_{E_2}(\alpha T)~~.
\ee
\end{Proposition}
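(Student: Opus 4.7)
My plan is to establish the pointwise identity $S_{Hom(E_1,E_2)}(\alpha,\sigma_{E_1,E_2}(T))=\str_{E_2}(\alpha T)$, from which the second, integrated assertion follows at once by applying $\int_X$ and unwinding the definition of $\S_{Hom(E_1,E_2)}$. Both sides of this identity are separately continuous bilinear maps from $\rOmega(X,Hom(E_1,E_2))\times\hOmega(X,Hom(E_2,E_1))$ into $\hOmega(X)$, so by a density argument it suffices to verify equality on decomposable tensors $\alpha=\omega_1\otimes f$ and $T=\omega_2\otimes g$, where $\omega_1,\omega_2$ are differential forms of pure bidegrees $(p_1,q_1)$ and $(p_2,q_2)$, and $f$, $g$ are pure-parity (smooth, respectively current-valued) sections of $Hom^i(E_1,E_2)$ and $Hom^j(E_2,E_1)$ for some $i,j\in\Z_2$.

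On such decomposables, the definition of $\sigma_{E_1,E_2}$ gives $\sigma_{E_1,E_2}(T)=\omega_2\otimes\hsigma_{E_1,E_2}(g)$, and formula \eqref{cBQdef} applied to $Q:=Hom(E_1,E_2)$ yields
\[
S_{Hom(E_1,E_2)}(\alpha,\sigma_{E_1,E_2}(T))=(-1)^{i(p_2+q_2+1)}\delta_{i+j,\0}\,\hsigma_{E_1,E_2}(g)(f)\,\omega_1\wedge\omega_2.
\]
Relation \eqref{evsigma}, together with the defining formula \eqref{ev_Q} for $\ev_Q$, rewrites $\hsigma_{E_1,E_2}(g)(f)$ explicitly in terms of $\str_{E_2}(f\circ g)$. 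On the other side, unwinding the bimodule product $\alpha T$ and applying the extended supertrace $\str_{E_2}$ produces
\[
\str_{E_2}(\alpha T)=(-1)^{i(p_2+q_2)}\str_{E_2}(f\circ g)\,\omega_1\wedge\omega_2,
\]
where the Koszul prefactor records the cost of commuting $f$ (of parity $i$) past the form $\omega_2$ (of total form-degree $p_2+q_2$); the supertrace vanishes automatically unless $i+j\equiv\0$ in $\Z_2$, which accounts for the $\delta_{i+j,\0}$ factor on the other side.

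The main task, and the only substantive one, is the sign bookkeeping: one must verify that the Koszul prefactor $(-1)^{i(p_2+q_2+1)}$ built into the definition of $\cS_Q$, together with the $(-1)^i$ built into $\ev_Q$ through \eqref{ev_Q}, combine cleanly with the Koszul sign $(-1)^{i(p_2+q_2)}$ arising from the bundle-valued product to yield the same overall sign on both sides. Once this elementary verification is performed, both sides collapse to the common expression $(-1)^{i(p_2+q_2)}\delta_{i+j,\0}\str_{E_2}(f\circ g)\,\omega_1\wedge\omega_2$, which proves the pointwise identity on decomposable elements and hence, by bilinearity and continuity, in general. Integrating over $X$ and recognizing $\int_X S_{Hom(E_1,E_2)}(\cdot,\cdot)$ as $\S_{Hom(E_1,E_2)}(\cdot,\cdot)$ by the very definition of the graded Serre pairing establishes the second assertion.
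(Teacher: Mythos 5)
Your proof is correct and follows essentially the same route as the paper, which simply observes that the claim "follows immediately from \eqref{evsigma}, \eqref{cBQdef} and the definition of the extended supertrace"; your reduction to decomposables and the sign check $(-1)^{i(p_2+q_2+1)}\cdot(-1)^{i}=(-1)^{i(p_2+q_2)}$ (absorbing the $(-1)^i$ from \eqref{ev_Q} via \eqref{evsigma}) is exactly the verification the paper leaves implicit. The only stylistic quibble is that you announce the sign bookkeeping as a task "to be performed" rather than displaying the one-line congruence, but since you state both sides explicitly and they do collapse to the common expression you give, the argument is complete.
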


\begin{proof} 
Follows immediately from \eqref{evsigma}, \eqref{cBQdef}
and from the definition of the extended supertrace.\qed
\end{proof}

\noindent The wedge product with $\Omega$ induces a topological isomorphism: 
\ben
\label{tis2}
\Omega\wedge \,: \hcA(X, Hom(E_1,E_2))\stackrel{\sim}{\longrightarrow} \hOmega^{d,\bullet}(X, Hom(E_1,E_2))~~.
\een

\vspace{3mm} 

\begin{Corollary}
For any $\alpha\in \cA(X,Hom(E_1,E_2))$ and any $T\in \hcA(X,Hom(E_2,E_1))$, we have: 
\be
S_{Hom(E_1,E_2)}(\Omega\wedge \alpha,\sigma_{E_1,E_2}(T))=\Omega\wedge \str_{E_2}(\alpha T)~~.
\ee
In particular, the Serre pairing of the $\Z_2$-graded vector bundle $Hom(E_1,E_2)$ satisfies: 
\be
\S_{Hom(E_1,E_2)}(\Omega\wedge \alpha,\sigma_{E_1,E_2}(T))=\int_X \Omega\wedge \str_{E_2}(\alpha T)=\langle \alpha, T\rangle_{E_1,E_2}~~.
\ee
\end{Corollary}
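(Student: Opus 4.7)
The plan is to deduce this Corollary as a direct application of Proposition~\ref{prop:SerreSigma}, followed by a straightforward manipulation of the fact that wedging with $\Omega$ commutes with the extended supertrace.

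First, I would note that $\cA(X,Hom(E_1,E_2))\subset \rOmega(X,Hom(E_1,E_2))$ (it is the closed subspace of forms of type $(0,\bullet)$) and likewise $\hcA(X,Hom(E_2,E_1))\subset \hOmega(X,Hom(E_2,E_1))$, so the element $\Omega\wedge\alpha\in \rOmega^{d,\bullet}(X,Hom(E_1,E_2))$ and $T\in \hOmega(X,Hom(E_2,E_1))$ are valid inputs for the identity proved in Proposition~\ref{prop:SerreSigma}. Applying that proposition to the pair $(\Omega\wedge\alpha,T)$ yields
\be
S_{Hom(E_1,E_2)}(\Omega\wedge\alpha,\sigma_{E_1,E_2}(T))=\str_{E_2}\big((\Omega\wedge\alpha) T\big)~~.
\ee
The next step is to observe that since $\Omega$ is a scalar-valued form and the extended supertrace $\str_{E_2}$ is $\cinf$-linear with respect to the form factor, one has $(\Omega\wedge\alpha)T=\Omega\wedge(\alpha T)$ in $\rOmega(X,End(E_2))\otimes \hOmega(X)$ up to trivial sign bookkeeping (the $(d,0)$-form $\Omega$ has even form-degree $d\cdot 0+\ldots$ in the relevant sign rule when pushed past a section of $End$), and consequently
\be
\str_{E_2}\big((\Omega\wedge\alpha)T\big)=\Omega\wedge \str_{E_2}(\alpha T)~~,
\ee
which establishes the first displayed identity.

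For the second part of the Corollary, I would simply integrate both sides over $X$: by the definition of the graded Serre pairing (Section~\ref{subsec:SerrePairing}), the left-hand side becomes $\S_{Hom(E_1,E_2)}(\Omega\wedge\alpha,\sigma_{E_1,E_2}(T))$, while the right-hand side coincides with $\langle\alpha,T\rangle_{E_1,E_2}$ by the definition of the extended boundary pairing. One should check that the integrand $\Omega\wedge\str_{E_2}(\alpha T)$ is a compactly-supported current of type $(d,d)$ (since $T$ is compactly supported and $\Omega$ contributes the $(d,0)$ part), so that the integral makes sense and picks out the relevant component as required by the convention for $\int_X$ used in defining $\S_Q$.

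The only thing requiring care in this proof is the sign bookkeeping: I would verify once, using the trigrading conventions of Section~\ref{sec:graded} and the definition \eqref{cBQdef}, that pulling $\Omega$ out of $\cS_Q$ produces no nontrivial sign because $\Omega$ has pure bundle degree $0$ and pure antiholomorphic degree $0$. Once this is checked, the result is immediate and involves no further analytical input; the real content has already been absorbed into Proposition~\ref{prop:SerreSigma}. I do not expect any genuine obstacle here --- this Corollary is a book-keeping consequence packaged to rewrite the boundary pairing as a Serre pairing, which is what will be needed in the subsequent non-degeneracy argument for the boundary trace.
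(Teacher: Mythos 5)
Your proposal is correct and follows exactly the paper's route: apply Proposition~\ref{prop:SerreSigma} to the pair $(\Omega\wedge\alpha,T)$ and then pull $\Omega$ out of the extended supertrace using its $\rOmega(X)$-linearity (the paper's proof is literally ``follows immediately from Proposition~\ref{prop:SerreSigma} and from the definition of the extended supertrace''). Your sign check is also sound, since the Koszul signs in \eqref{cBQdef} and in the module structure depend only on the degree of the \emph{second} form factor and the bundle degree, so prepending the $(d,0)$-form $\Omega$ to the first argument introduces no sign.
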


\begin{proof}
Follows immediately from Proposition \ref{prop:SerreSigma} and from
the definition of the extended supertrace.  \qed
\end{proof}

\begin{Proposition}
\label{prop:bdry_perf}
The extended boundary pairing $\langle \cdot, \cdot \rangle_{E_1,E_2}$
is a perfect topological pairing between the topological vector spaces
$\cA(X,Hom(E_1,E_2))$ and $\hcA(X,Hom(E_2,E_1))$.
\end{Proposition}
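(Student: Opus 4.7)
The plan is to reduce the statement to Lemma \ref{lemma:Serre} applied to the $\Z_2$-graded holomorphic vector bundle $Q:=Hom(E_1,E_2)$, by using the topological isomorphisms \eqref{tis1} and \eqref{tis2} together with the identification $Q^\vee\simeq Hom(E_2,E_1)$ provided by $\hsigma_{E_1,E_2}$. Concretely, the Corollary just proved shows that
\be
\langle \alpha, T\rangle_{E_1,E_2}=\S_{Hom(E_1,E_2)}\bigl((\Omega\wedge)\alpha,\,\sigma_{E_1,E_2}(T)\bigr)
\ee
for all $\alpha\in\cA(X,Hom(E_1,E_2))$ and $T\in\hcA(X,Hom(E_2,E_1))$. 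Thus $\langle\cdot,\cdot\rangle_{E_1,E_2}$ factors as the graded Serre pairing $\S_Q$ precomposed with two topological isomorphisms.

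First I would record that both $\Omega\wedge:\cA(X,Hom(E_1,E_2))\to \rOmega^{d,\bullet}(X,Hom(E_1,E_2))$ and $\sigma_{E_1,E_2}:\hcA(X,Hom(E_2,E_1))\to\hOmega^{d,\bullet}(X,Hom(E_1,E_2)^\vee)$ are topological isomorphisms, as already stated in \eqref{tis1} and \eqref{tis2} (and their restrictions to the relevant rank-bidegrees, since $\alpha$ lives purely in bidegree $(0,\bullet)$ so that $(\Omega\wedge)\alpha$ lives in bidegree $(d,\bullet)$, the only component on which $\S_Q$ is nontrivial). Next I would invoke Lemma \ref{lemma:Serre} with $Q=Hom(E_1,E_2)$ to conclude that $\S_Q$ is a perfect topological pairing between $\rOmega(X,Q)$ and $\hOmega(X,Q^\vee)$; restriction to the bidegree $(d,\bullet)$ on the first factor is harmless since $\S_Q$ vanishes on the other bidegrees by its tridegree property, and its Riesz morphisms therefore identify $\rOmega^{d,\bullet}(X,Q)$ with the topological dual of $\hOmega^{0,\bullet}(X,Q^\vee)$ (and symmetrically).

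Finally, since the composition of topological isomorphisms with a perfect topological pairing is again a perfect topological pairing, the left and right Riesz morphisms of $\langle\cdot,\cdot\rangle_{E_1,E_2}$ are obtained by composing those of $\S_Q$ with the transposes of $\Omega\wedge$ and $\sigma_{E_1,E_2}$ (both of which are topological isomorphisms between the strong duals, since the transpose of a topological isomorphism is a topological isomorphism in the strong topology). Hence both Riesz morphisms are topological isomorphisms, which is precisely the perfectness of $\langle\cdot,\cdot\rangle_{E_1,E_2}$.

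The only possible obstacle is bookkeeping: one must verify that the restriction of $\S_Q$ to the relevant homogeneous components remains perfect and that no component of $\sigma_{E_1,E_2}(T)$ is discarded when pairing with $(\Omega\wedge)\alpha$. This is automatic from the tridegree $(-d,-d,0)$ of $\S_Q$ together with the fact that $(\Omega\wedge)\alpha$ carries the full rank bidegree $(d,\bullet)$, so no content is lost. With this in place, the conclusion is immediate. \qed
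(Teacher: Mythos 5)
Your proof follows essentially the same route as the paper: the paper's proof is a one-line reduction citing Proposition \ref{prop:SerreSigma} (equivalently the Corollary you invoke), Lemma \ref{lemma:Serre} for $Q=Hom(E_1,E_2)$, and the topological isomorphisms \eqref{tis1} and \eqref{tis2}, with your version simply making the bidegree bookkeeping explicit. One cosmetic slip: the target of $\sigma_{E_1,E_2}$ should be $\hOmega^{0,\bullet}(X,Hom(E_1,E_2)^\vee)$ rather than $\hOmega^{d,\bullet}$, since $\sigma_{E_1,E_2}$ preserves the rank bigrading; this does not affect the argument.
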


\begin{proof}
Follows from Proposition \ref{prop:SerreSigma} and Lemma \ref{lemma:Serre},
using the fact that \eqref{tis1} and \eqref{tis2} are topological
isomorphisms. \qed
\end{proof}

\subsection{Holomorphic factorizations}

Recall that a {\em holomorphic factorization} of $W$ is a pair
$(E,D)$, where $E$ is a holomorphic vector superbundle and $D\in
\rGamma(X,End^\1(E))$ is an odd holomorphic section of $E$ such that
$D^2=W\id_E$ (see \cite{nlg1}). As explained in op. cit., holomorphic
factorizations of $W$ form an $\O(X)$-linear and $\Z_2$-graded dg-category 
$\DF(X,W)$ known as the {\em twisted Dolbeault category of
holomorphic factorizations}, whose morphism spaces are the spaces
\eqref{HomDF} of bundle-valued forms, endowed with the twisted
Dolbeault differentials \eqref{tdolbeault}. 

\subsection{The topological complexes $\big(\cA(X,Hom(E_1,E_2)), \updelta_{a_1,a_2}\big)$ 
and $\big(\cA_c(X,Hom(E_1,E_2)), \updelta_{a_1,a_2}\big)$ }

Let $a_1=(E_1,D_1)$ and $a_2=(E_2,D_2)$ be two holomorphic
factorizations of $W$. We endow the space $\cA(X,Hom(E_1,E_2))$ with
the twisted Dolbeault differential $\updelta:=\updelta_{a_1,a_2}\eqdef
\bpd+\fd$, where $\bpd:=\bpd_{Hom(E_1,E_2)}$ and $\fd:=\fd_{a_1,a_2}$ is the defect
differential (see \eqref{tdolbeault} and \eqref{defect}). 

\

\begin{Proposition}
$\big(\cA(X,Hom(E_1,E_2)), \updelta_{a_1,a_2}\big)$ is a $\Z_2$-graded
  topological complex of FS spaces. 
\end{Proposition}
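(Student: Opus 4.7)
The plan is to verify three things: that $\cA(X,Hom(E_1,E_2))$ is an FS space with a well-defined $\Z_2$-grading, that $\updelta_{a_1,a_2}$ is a continuous endomorphism of $\Z_2$-degree $\hat 1$, and that $\updelta_{a_1,a_2}^2 = 0$. The first two points are essentially bookkeeping; the third is the only place where the defining condition $D_i^2 = W \id$ of a holomorphic factorization is needed.

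First I would recall, as already used in Section \ref{sec:graded} for the bundle $\wedge TX$, that $\rOmega^{0,q}(X,V)$ is an FS space for any holomorphic vector bundle $V$ on $X$ when endowed with the topology of uniform convergence of all derivatives on compact subsets. Applying this to $V := Hom(E_1,E_2)$ and using that finite direct sums of FS spaces are FS, one obtains that $\cA(X,Hom(E_1,E_2)) = \bigoplus_{i=0}^d \rOmega^{0,i}(X,Hom(E_1,E_2))$ is an FS space. The $\Z_2$-grading corresponds to the holomorphic decomposition $Hom(E_1,E_2) = Hom^\0(E_1,E_2) \oplus Hom^\1(E_1,E_2)$ combined with the parity of the antiholomorphic form degree, and each homogeneous component is a closed FS subspace.

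Next I would check that $\updelta_{a_1,a_2} = \bpd_{Hom(E_1,E_2)} + \fd_{a_1,a_2}$ is continuous and raises the $\Z_2$-degree by $\hat 1$. The Dolbeault operator $\bpd_{Hom(E_1,E_2)}$ is a first-order differential operator, hence continuous for the Fr\'echet topology; it increases the form degree $i$ by $1$ while preserving the bundle degree $\kappa$. The defect differential $\fd_{a_1,a_2}$, by its explicit formula \eqref{defect}, is a zeroth-order $\cinf$-linear operator given by pre- and post-composition with the fixed holomorphic sections $D_1,D_2$ (up to signs) and is therefore continuous; it preserves $i$ and shifts $\kappa$ by $\hat 1$. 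Both pieces thus have $\Z_2$-degree $\hat 1$.

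The only nontrivial step is $\updelta_{a_1,a_2}^2 = 0$, which expands as $\bpd^2 + \{\bpd,\fd\} + \fd^2 = 0$. The first term vanishes since $\bpd$ is a differential. The anticommutator $\{\bpd,\fd\}$ vanishes because $D_1$ and $D_2$ are holomorphic, so $\bpd$ graded-commutes with the pre- and post-composition operations appearing in \eqref{defect}; the required sign cancellation between $\bpd$ and $\fd$ is produced by the factor $(-1)^i$ in \eqref{defect}. Finally, for $\omega \in \cA^i(X)$ and $f \in \rGamma(X,Hom^\kappa(E_1,E_2))$, a direct computation shows that in $\fd^2(\omega \otimes f)$ the two mixed terms involving $D_2 f D_1$ cancel by the signs in \eqref{defect}, while the remaining terms reduce to $\omega \otimes D_2^2 f - \omega \otimes f D_1^2 = W \omega \otimes f - W \omega \otimes f = 0$ by $D_i^2 = W \id$. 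The main (though routine) obstacle is this sign bookkeeping, and nothing beyond \eqref{defect} and the defining property of a holomorphic factorization is needed.
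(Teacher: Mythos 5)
Your proof is correct and follows essentially the same route as the paper's: identify $\cA(X,Hom(E_1,E_2))$ as an FS space and observe that $\bpd_{Hom(E_1,E_2)}$ and $\fd_{a_1,a_2}$ are continuous (first-order and zeroth-order operators, respectively). The paper's own proof is terser only because the identity $\updelta_{a_1,a_2}^2=0$ is already part of the construction of the dg-category $\DF(X,W)$ in \cite{nlg1}, so your sign verification of $\fd^2=0$ and $\{\bpd,\fd\}=0$, while correct, re-proves something the paper takes as given.
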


\begin{proof}
It is clear that $\cA(X,Hom(E_1,E_2))$ is an FS space
while $\bpd_{Hom(E_1,E_2)}$ is a continuous differential. The
conclusion now follows by noticing that $\fd_{a_1,a_2}$ is continuous.\qed
\end{proof}

\

\noindent  Notice that $\cA_c(X,Hom(E_1,E_2))$ is a closed subspace of
$\cA(X,Hom(E_1,E_2))$ as well as a subcomplex of
$(\cA(X,Hom(E_1,E_2)), \updelta_{a_1,a_2})$. Let
$\HDF_{X,W}(a_1,a_2)$ denote the cohomology of the topological complex
$(\cA(X,Hom(E_1,E_2)),\updelta_{a_1,a_2})$ and $\HDF_{\!c,X,W}(a_1,a_2)$
denote the cohomology of $(\cA_c(X,Hom(E_1,E_2)),\updelta_{a_1,a_2})$.

\subsection{The topological complex $\big(\hcA(X\!,\!Hom(\!E_1,\!E_2)\!), \!\hupdelta_{a_1,a_2}\!\big)$}
Let $\hcA(\!X\!,\!Hom(\!E_1,\!E_2\!)\!)\!=\!\hOmega^{0,\bullet}\!(\!X\!,\!Hom(\!E_1,\!E_2)\!)$
denote the DFS space of $Hom(E_1,E_2)$-valued compactly-supported
currents of type $(0,\bullet)$ defined on $X$, which contains
$\cA_c(X,Hom(E_1,E_2))$ as a closed subspace. Let $\hupdelta_{a_1,a_2}$
be the canonical extension of $\updelta_{a_1,a_2}$ to $\hcA(X,
Hom(E_1,E_2))$. 

\vspace{3mm}

\begin{Proposition}
$\big(\hcA(X,Hom(E_1,E_2)), \hupdelta_{a_1,a_2}\big)$ is a $\Z_2$-graded
  topological complex of DFS spaces.
\end{Proposition}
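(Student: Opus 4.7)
The plan is to mimic the proof given for the preceding Proposition, adapting each step from the FS setting to the DFS setting. First I would verify that $\hcA(X,Hom(E_1,E_2))$ is a DFS space. This is standard: by definition $\hcA(X,Hom(E_1,E_2))=\hOmega^{0,\bullet}(X,Hom(E_1,E_2))$ is the space of compactly supported currents of type $(0,\bullet)$ valued in the holomorphic vector bundle $Hom(E_1,E_2)$, and such a space is topologically isomorphic to the strong topological dual of $\rOmega^{d,\bullet}(X,Hom(E_1,E_2)^\vee)$, which is an FS space by the discussion in Section \ref{sec:graded}. Hence $\hcA(X,Hom(E_1,E_2))$ is DFS, and the $\Z_2$-grading it inherits from $\cA(X,Hom(E_1,E_2))$ (with $\oplus_{\hat i +\kappa=\tau}\hOmega^{0,i}(X,Hom^\kappa(E_1,E_2))$ placed in degree $\tau$) is automatically respected by closed subspace decompositions.

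Next I would decompose $\hupdelta_{a_1,a_2}=\hbpd_{Hom(E_1,E_2)}+\hfd_{a_1,a_2}$, where each summand is the canonical extension to currents of the corresponding operator on smooth forms, and check continuity of each summand separately. The extended Dolbeault operator $\hbpd_{Hom(E_1,E_2)}$ is a first order differential operator and is continuous on compactly supported currents (equivalently, it is the transpose up to sign of the Dolbeault operator on smooth forms valued in the dual bundle, which is continuous between the relevant FS spaces). The extended defect differential $\hfd_{a_1,a_2}$ is an order zero operator given by left and right multiplication with the holomorphic sections $D_1,D_2$, hence is continuous on currents as well. Both operators have $\Z_2$-degree $\1$, so their sum $\hupdelta_{a_1,a_2}$ is a continuous endomorphism of $\hcA(X,Hom(E_1,E_2))$ of $\Z_2$-degree $\1$.

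Finally I would verify $\hupdelta_{a_1,a_2}^2=0$. On the dense subspace $\cA_c(X,Hom(E_1,E_2))\subset \hcA(X,Hom(E_1,E_2))$ the operator $\hupdelta_{a_1,a_2}$ restricts to $\updelta_{a_1,a_2}$, which squares to zero by the already-established properties of the twisted Dolbeault category $\DF(X,W)$. Since $\hupdelta_{a_1,a_2}^2$ is a continuous endomorphism that vanishes on a dense subspace of a Hausdorff topological vector space, it vanishes identically. This completes the verification that $(\hcA(X,Hom(E_1,E_2)),\hupdelta_{a_1,a_2})$ is a $\Z_2$-graded topological complex of DFS spaces. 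No step here presents a real obstacle; the only mild subtlety is the density-plus-continuity argument used to transfer $\updelta_{a_1,a_2}^2=0$ from smooth forms to currents, but the density of $\cA_c$ in $\hcA$ is standard and has been used implicitly throughout the paper.
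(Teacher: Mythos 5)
Your proof is correct and follows essentially the same route as the paper's, which is a two-line argument: the DFS property of $\hcA(X,Hom(E_1,E_2))$ is read off from the perfect extended boundary pairing of Proposition \ref{prop:bdry_perf} (equivalently, from the Serre-duality identification of compactly supported currents with the strong dual of an FS space of smooth forms, exactly as you argue), and continuity of $\hupdelta_{a_1,a_2}$ is deduced from continuity of $\updelta_{a_1,a_2}$; your term-by-term treatment of $\hbbpd$ and $\hat{\fd}_{a_1,a_2}$ just spells this out. The one place where you diverge is the verification of $\hupdelta_{a_1,a_2}^2=0$, which the paper leaves implicit. Your density-plus-continuity argument does work, since compactly supported smooth forms are indeed dense in compactly supported currents for the strong topology (e.g.\ by mollification, or by weak density plus convexity). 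Be aware, however, that the paper itself asserts that $\cA_c(X,Hom(E_1,E_2))$ sits inside $\hcA(X,Hom(E_1,E_2))$ as a \emph{closed} subspace and never invokes the density you claim has been ``used implicitly throughout''; indeed, the comparison of $\HDF_{c,X,W}$ with $\hHDF_{X,W}$ is carried out by a spectral-sequence argument precisely so as not to rely on it. If you want to stay entirely within the paper's stated framework, it is cleaner to observe that under the perfect pairing $\hupdelta_{a_1,a_2}$ is (up to sign) the transpose of the continuous square-zero differential on the FS complex $\cA(X,Hom(E_2,E_1))$, so that $\hupdelta_{a_1,a_2}^2=\pm(\updelta_{a_2,a_1}^2)^t=0$ with no density needed.
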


\begin{proof}
The fact that $\hcA(X,Hom(E_1,E_2))$ is a DFS space follows from
Proposition \eqref{prop:bdry_perf}, while continuity of $\hupdelta_{a_1,a_2}$
follows from the fact that $\updelta_{a_1,a_2}$ is continuous. \qed
\end{proof}

\

\noindent Let $\hHDF_{X,W}(a_1,a_2)$ denote the cohomology of the complex
$\big(\hcA(X,Hom(E_1,E_2)),\hupdelta_{a_1,a_2}\big)$. 

\

\begin{Proposition}
\label{prop:bdry_top}
The extended boundary pairing is a perfect topological pairing of
complexes between $\cA(X,Hom(E_1,E_2))$ and $\hcA(X,\Hom(E_1,E_2))$.
\end{Proposition}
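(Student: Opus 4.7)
The plan is to reduce the statement to two separate facts already in hand plus one compatibility computation: Proposition \ref{prop:bdry_perf} already gives that $\langle\cdot,\cdot\rangle_{E_1,E_2}$ is a perfect topological pairing of topological vector spaces (continuity and bijectivity of the Riesz morphisms). What remains is to show that it is a pairing of $\Z_2$-graded topological cochain complexes in the sense of Definition \ref{def:top_pairing}, namely that it has the correct homogeneity and that it satisfies the graded compatibility with the differentials $\updelta_{a_1,a_2}$ and $\hupdelta_{a_2,a_1}$. The homogeneity condition is immediate from type considerations: for $\alpha\in\cA^i(X,Hom^\kappa(E_1,E_2))$ and $T\in\hcA^j(X,Hom^\lambda(E_2,E_1))$, the integrand $\Omega\wedge\str_{E_2}(\alpha T)$ has type $(d,i+j)$, so the pairing vanishes unless $i+j=d$ and $\kappa+\lambda=\0$, which pins down the bidegree; this parallels Proposition \ref{prop:pairings} in the bulk setting.

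The main step is the differential compatibility, namely
\[
\langle \updelta_{a_1,a_2}\alpha,T\rangle_{E_1,E_2}+(-1)^{\deg\alpha}\langle \alpha,\hupdelta_{a_2,a_1}T\rangle_{E_1,E_2}=0
\]
for $\alpha$ homogeneous. The strategy is to rewrite the left-hand side, via the graded Leibniz rule for $\updelta_{?,?}$ with respect to composition, as
\[
\int_X \Omega\wedge\str_{E_2}\bigl(\updelta_{a_2,a_2}(\alpha T)\bigr),
\]
where $\alpha T\in\hcA(X,End(E_2))$ is compactly supported, and then argue that this integral vanishes. The argument splits according to $\updelta_{a_2,a_2}=\bpd_{End(E_2)}+\fd_{a_2,a_2}$: the defect part gives a supercommutator $[D_2,\alpha T]$ (up to signs) by the explicit formula \eqref{defect}, and since $D_2$ is odd and $\str_{E_2}$ vanishes on supercommutators, the $\fd_{a_2,a_2}$ contribution drops out. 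The Dolbeault part is handled by $\bpd$-Stokes: because $\str_{E_2}$ commutes with $\bpd$, and because $\Omega$ is a holomorphic $(d,0)$-form (so $\pd\Omega=0$ and $\pd$ of $\Omega$ wedged with anything vanishes by type), one has $\Omega\wedge\bpd\str_{E_2}(\alpha T)=\pm\,d\bigl(\Omega\wedge\str_{E_2}(\alpha T)\bigr)$, and this integrates to zero by Stokes on the compactly supported current.

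The two technical points that will require attention are the sign bookkeeping in the Leibniz rule (because $\updelta_{a_1,a_2}$, $\updelta_{a_2,a_1}$ and $\updelta_{a_2,a_2}$ are three distinct differentials mixing form degree and $\Z_2$-degree) and making sure that the Leibniz identity
\[
\updelta_{a_2,a_2}(\alpha T)=(\updelta_{a_1,a_2}\alpha)\,T+(-1)^{\deg\alpha}\alpha\,(\hupdelta_{a_2,a_1}T)
\]
holds in the currents setting, which I expect to follow directly by density from the smooth case proved in \cite{nlg1}. I anticipate that the Stokes step also needs a brief justification that integration by parts is valid for currents with compact support paired against the smooth holomorphic form $\Omega$; this is classical and essentially built into the duality $\hOmega(X,V)^\ast\simeq\rOmega(X,V^\vee)$ underlying Lemma \ref{lemma:Serre}. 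Once these are in place, combining perfectness from Proposition \ref{prop:bdry_perf} with the verified complex-pairing axioms yields the claim.
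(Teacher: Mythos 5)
Your proposal is correct and follows the same route as the paper: perfectness is taken directly from Proposition \ref{prop:bdry_perf}, and the compatibility with the differentials is the ``direct computation as in \cite{nlg1}'' that the paper merely cites, which you have correctly unpacked (Leibniz rule for composition, vanishing of $\str_{E_2}$ on the supercommutator with $D_2$, and $\bpd$-Stokes against the holomorphic $(d,0)$-form $\Omega$). The sign and degree bookkeeping you flag as remaining work is routine and matches the analogous bulk computation in Proposition \ref{prop:pcomplexes}.
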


\begin{proof}
The fact that $\langle \cdot, \cdot \rangle_{E_1,E_2}$ is a pairing of
complexes follows by direct computation as in \cite{nlg1}. The fact
that this is a perfect pairing follows from Proposition
\ref{prop:bdry_perf}. \qed
\end{proof}

\subsection{Non-degeneracy of the cohomological boundary pairings}

\

\

\begin{Lemma}
\label{lemma:bdry}
Suppose that the critical set $Z_W$ is compact. Then the vector spaces
$\HDF_{X,W}(a_1,a_2)$ and $\hHDF_{X,W}(a_1,a_2)$ are
  finite-dimensional and the extended canonical off-shell boundary pairing
  $\langle \cdot, \cdot \rangle_{E_1,E_2}$ is cohomologically perfect. In
  particular, we have induced isomorphisms of $\Z_2$-graded vector
  spaces:
\be
\hHDF_{X,W}(a_1,a_2)\simeq_\C \HDF_{X,W}(a_1,a_2)^\vee~~.
\ee
\end{Lemma}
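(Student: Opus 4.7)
The plan is to mirror closely the proof of Lemma \ref{lemma:bulk}, replacing the bulk data by the corresponding boundary data and using the perfect topological pairing of complexes established in Proposition \ref{prop:bdry_top}. The argument has three ingredients: finite-dimensionality of $\HDF_{X,W}(a_1,a_2)$, the FS/DFS duality machinery of Section \ref{sec:duality}, and the fact that a perfect topological pairing of bounded complexes is automatically cohomologically perfect once one side has finite-dimensional cohomology.

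First, I would invoke the finite-dimensionality of $\HDF_{X,W}(a_1,a_2)$ under the hypothesis that $Z_W$ is compact; this is the boundary counterpart of the statement $\dim_\C \HPV^k(X,W)<\infty$ used in Lemma \ref{lemma:bulk} and is proved in \cite{nlg1} (it can also be derived by applying the spectral sequence machinery of Section \ref{sec:spec} to the double complex $(\cA(X,Hom(E_1,E_2)),\bpd,\fd_{a_1,a_2})$, whose first page computes Dolbeault cohomology of the sheaf Koszul-type complex associated to $a_1,a_2$, and this in turn is finite-dimensional once the support is localized near $Z_W$). Next, since $(\cA(X,Hom(E_1,E_2)),\updelta_{a_1,a_2})$ is a bounded topological cochain complex of FS spaces with finite-dimensional cohomology in every $\Z_2$-degree, Proposition \ref{prop:fdDual} applies: the differentials are topological homomorphisms, the dual complex of DFS spaces has finite-dimensional cohomology, and the canonical map from its cohomology into the algebraic dual of $\HDF_{X,W}(a_1,a_2)$ is bijective.

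Now the extended boundary pairing $\langle\cdot,\cdot\rangle_{E_1,E_2}$ is a perfect topological pairing of bounded $\Z_2$-graded topological complexes by Proposition \ref{prop:bdry_top}. Combining this with the finite-dimensionality of $\HDF_{X,W}(a_1,a_2)$ just recorded, Corollary \ref{cor:perf_criterion} applies verbatim: it yields both finite-dimensionality of $\hHDF_{X,W}(a_1,a_2)$ and the fact that the induced cohomological pairing is perfect degree-by-degree. In particular the right Riesz morphism of the induced pairing is a topological (hence $\C$-linear) isomorphism
\be
\hHDF_{X,W}(a_1,a_2)\stackrel{\sim}{\longrightarrow}\HDF_{X,W}(a_1,a_2)^\vee,
\ee
which is the asserted isomorphism of $\Z_2$-graded vector spaces.

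The main obstacle is securing the finite-dimensionality of $\HDF_{X,W}(a_1,a_2)$ when $Z_W$ is compact, since everything else is a formal transfer of the bulk argument. If one cannot simply cite the corresponding statement from \cite{nlg1}, one would run the spectral sequence argument of Section \ref{sec:spec}: filter by the $\fd_{a_1,a_2}$-degree, identify the $E_1$ page with Dolbeault cohomology of a bundle-valued Koszul complex, observe that this Dolbeault cohomology is concentrated near $Z_W$ and hence finite-dimensional, and conclude by convergence of the spectral sequence that $\HDF_{X,W}(a_1,a_2)$ is itself finite-dimensional. Once that point is settled, the rest of the proof reduces to two lines citing Proposition \ref{prop:bdry_top}, Proposition \ref{prop:fdDual} and Corollary \ref{cor:perf_criterion}.
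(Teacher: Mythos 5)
Your proposal matches the paper's proof: the paper likewise cites \cite{nlg1} for the finite-dimensionality of $\HDF_{X,W}(a_1,a_2)$ when $Z_W$ is compact and then concludes via Proposition \ref{prop:fdDual} together with the perfect topological pairing of Proposition \ref{prop:bdry_top}, exactly as in your appeal to Corollary \ref{cor:perf_criterion}. Your extra sketch of a spectral-sequence route to finite-dimensionality is a reasonable fallback but is not needed, since the citation suffices.
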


\begin{proof}
Since $Z_W$ is compact, the vector space
$\HDF_{X,W}(a_1,a_2)=\rH(\cA(X,Hom(E_1,\!E_2)),\updelta_{a_1,a_2})$ is
finite-dimensional, as shown in \cite{nlg1}. The remaining statements
follow from Proposition \ref{prop:fdDual}. \qed
\end{proof}

\vspace{3mm}

\noindent Let $s\!:\!\cA_c(X,Hom(E_1,\!E_2))\!\rightarrow\!
\hcA(X,Hom(E_1,\!E_2))$ be the inclusion map 
and $s_\ast\!:\!\HDF_{\!c,X,W}(a_1,a_2)\!\rightarrow\! \hHDF_{X,W}(a_1,a_2)$ be
the linear map induced by $s$ on cohomology.

\

\begin{Proposition}
\label{prop:ssbdry}
Suppose that $Z_W$ is compact. Then $\HDF_{X,W}(a_1,a_2)$ and
$\hHDF_{X,W}(a_1,a_2)$ are finite-dimensional and the map $s_\ast$ is an
isomorphism of vector spaces.
\end{Proposition}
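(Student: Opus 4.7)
The proof will be the boundary-sector analog of Proposition~\ref{prop:ssbulk}, and I would pursue either of the two routes spelled out there.

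The first route is via $\rGamma_c$-acyclic resolutions and compactly supported hypercohomology. I would introduce the sheaf complex $\cK_{a_1,a_2}$ analogous to $\cK_W$: a $\Z_2$-graded complex of analytic sheaves whose underlying sheaf in each $\Z_2$-degree is $Hom(E_1,E_2)$ and whose differential on holomorphic sections is the defect differential $\fd_{a_1,a_2}$ of \eqref{defect}. The condition $D_i^2=W\,\id_{E_i}$ ensures this really is a complex. The key observation is that, because all sheaves in sight are fine, the Dolbeault resolution identifies both $\big(\cA_c(X,Hom(E_1,E_2)),\bpd_{Hom(E_1,E_2)}\big)$ and $\big(\hcA(X,Hom(E_1,E_2)),\bpd_{Hom(E_1,E_2)}\big)$ as $\rGamma_c$-acyclic resolutions of the holomorphic sheaf $Hom(E_1,E_2)$ in each $\Z_2$-degree. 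Assembling these column-wise and taking total complexes yields $\rGamma_c$-acyclic resolutions of $\cK_{a_1,a_2}$. Consequently, both $\HDF_{c,X,W}(a_1,a_2)$ and $\hHDF_{X,W}(a_1,a_2)$ compute the compactly supported hypercohomology $\H^\bullet_c(\cK_{a_1,a_2})$, and $s$ induces the canonical isomorphism between them.

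The second, equivalent, route is to apply Theorem~\ref{T10} directly to $s$, viewed as a morphism of double complexes with vertical differential $\bpd$ and horizontal differential $\fd_{a_1,a_2}$. Both double complexes satisfy condition~B of Section~\ref{sec:spec} (they are concentrated in two horizontal $\Z_2$-degrees $\0$ and $\1$), so Theorem~\ref{T10} applies. The zeroth-page vertical cohomology computes compactly supported Dolbeault cohomology of the vector bundle $Hom(E_1,E_2)$, and $s$ induces an isomorphism there by the standard comparison between the compactly supported smooth and distributional Dolbeault resolutions of a holomorphic sheaf. Theorem~\ref{T10} then yields isomorphisms on the associated gradeds of the induced filtrations on total cohomology, which lift to $s_\ast$ being an isomorphism since short exact sequences of $\C$-vector spaces split.

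For finiteness, $\HDF_{X,W}(a_1,a_2)$ is finite-dimensional by Lemma~\ref{lemma:bdry}. To obtain finite-dimensionality of $\hHDF_{X,W}(a_1,a_2)$ via $s_\ast$, one needs the boundary-sector analog of \cite[Proposition~3.7]{nlg1}, stating that the inclusion $\cA_c(X,Hom(E_1,E_2))\hookrightarrow \cA(X,Hom(E_1,E_2))$ induces an isomorphism on $\updelta_{a_1,a_2}$-cohomology when $Z_W$ is compact; this is the analog of the bulk fact that compactly supported and full polyvector cohomology coincide, and its proof proceeds by the same cutoff argument applied locally near $Z_W$. The main obstacle I anticipate is making the sheaf complex $\cK_{a_1,a_2}$ and its fine-sheaf resolutions completely precise in the $\Z_2$-graded setting—in particular, keeping track of the sign conventions in \eqref{defect} so that the resolution \emph{as a total complex} of sheaves genuinely resolves $\cK_{a_1,a_2}$—but this is formal and introduces no essentially new analytic difficulty beyond the bulk case.
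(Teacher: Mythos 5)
Your second route is precisely the paper's proof: the authors unwind the $\Z_2$-graded complexes into horizontally 2-periodic double complexes $K^{i,j}=\cA^j(X,Hom^{\hat i}(E_1,E_2))$ and $\hK^{i,j}$ with vertical differential $\bbpd$ and horizontal differential $(-1)^j\fd$, observe that the columns are Dolbeault resolutions of the nodes of the 2-periodic locally free complex \eqref{DefComplex} so that the zeroth pages of the two spectral sequences agree, invoke Theorem \ref{T10} under condition B, and conclude by splitting the induced filtrations on total cohomology. Two small corrections to how you set this up. First, condition B holds here because the \emph{vertical} degree $j$ is confined to $\{0,\dots,d\}$, not because there are "two horizontal $\Z_2$-degrees"; the horizontal direction is genuinely unbounded (2-periodic over $\Z$), which is exactly why the unwinding is needed before Section \ref{sec:spec} applies. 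Second, and for the same reason, your first route is not as formal as you suggest: the complex $\cK_{a_1,a_2}$ you propose is unbounded, so the hypercohomology machinery of \cite[Propositions 8.6 and 8.12]{Voisin} used in Proposition \ref{prop:ssbulk} for the bounded Koszul complex does not apply verbatim, and this is presumably why the paper switches to the spectral-sequence argument in the boundary sector. Your treatment of finiteness is consistent with the paper, which obtains finite-dimensionality of both $\HDF_{X,W}(a_1,a_2)$ and $\hHDF_{X,W}(a_1,a_2)$ directly from Lemma \ref{lemma:bdry} rather than routing the latter through $j_\ast$ and $s_\ast$; either chain is valid, and the quasi-isomorphism property of $j$ you invoke is indeed quoted from \cite{nlg1} later in the paper rather than reproved.
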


\begin{proof}
As in \cite[Subsection 5.1]{nlg2}, we define a horizontally 2-periodic
double complex $K=\oplus_{i,j} K^{i,j}$ which is an unwinding of
$\cA_c(X,Hom(E_1,E_2))$:
\be
K^{i,j} \eqdef \cA^j(X, Hom^{{\hat i}}(E_1,E_2))~,~~\forall i, j\in \Z~~,
\ee
with vertical differentials given by $\bbpd:=\bbpd_{a_1,a_2}$ and horizontal
differentials given by $(-1)^j\fd$. Then the total complex $\oplus_{n} K^n=
\oplus_{i+j=n} K^{i,j}$ has the differential
$\updelta_{a_1,a_2}=\bbpd+\fd$ and is 2-periodic. We similarly define
a double complex $\hK$ as the unwinding of $\hcA(X,Hom(E_1,E_2))$.  The
inclusion of single complexes $s:\cA_c(X,Hom(E_1,E_2))\hookrightarrow
\hcA(X,Hom(E_1,E_2))$ naturally defines an inclusion of double
complexes $s:(K,\bbpd,(-1)^j\fd)\hookrightarrow
(\hK,\widehat{\bbpd},(-1)^j\widehat{\fd})$.  The standard filtration
$F$ on the double complexes defined as in (\ref{f9}) is 2-periodic and induces
filtrations $\bF$ and $\hbF$ on $K$ and $\hK$ respectively.  We are
going to use Theorem \ref{T10} for the inclusion $s$.  Consider the
2-periodic complex of locally-free sheaves:
\ben
\label{DefComplex}
(\cE_{a_1,a_2}):~ ~\ldots \longrightarrow Hom^\1(E_1, E_2)\stackrel{\md}{\longrightarrow}
 Hom^\0(E_1, E_2) \stackrel{\md}{\longrightarrow} Hom^\1(E_1,E_2)\longrightarrow \ldots~~
\een
with $Hom^\0(E_1, E_2) $ sitting in even degrees.  The columns of both
double complexes form Dolbeault resolutions of the nodes of
$\cE_{a_1,a_2}$. Thus, the zero pages of the associated spectral
sequences coincide. In our case this reads:
\be
\bE_0^{p,q}=\rH^{p,q}_{\bbpd}(K)=\rH^{p,q}_{\hbbpd}(\hK)=\bhE_0^{p,q}~,~~ \forall p,q\in \Z~~.
\ee
Since both induced filtrations on total complexes satisfy condition
B. of Section \ref{sec:spec}, the associated spectral sequences
converge to the total cohomologies $H^n_\updelta(K)$ and
$H^n_{\hupdelta}(\hK)$ by Proposition \ref{pro8}.  Now Theorem
\ref{T10} implies that for the graded pieces of the induced filtration
on the total cohomology, the following isomorphism holds for every
$p$:
\begin{equation}
\label{grKhKiso}
\gr_\bF^p\rH^n_\updelta(K)\simeq \gr_\bF^p\rH^n_{\hupdelta}(\hK)~,~~\forall n,p\in \Z ~~.
\end{equation}
Returning to $\HDF_{\!c,X,W}(a_1,a_2)$, we can express this in terms
of the cohomology of the 2-periodic total complex:
\be
\rH_{\updelta}^n(K)=\rH^{{\hat n}}(\cA(X,Hom(E_1,E_2)),\updelta)=\Hom_{\HDF_{c,X,W}}^{\hat n}(a_1,a_2)~,~~\forall n\in \Z
\ee
and similarly for the complex $\hK$. This give an isomorphism:
\begin{equation}
 \gr_\hbF^p\hHDF_{X,W}^i(a_1,a_2) \simeq	\gr_\bF^p\HDF^i_{\!c,X,W}(a_1,a_2)~,~~\forall p ~~\text{for } i=0,1.
\end{equation}
Since the groups $\Ext^i(\cdot,\cdot)$ vanish for $i>0$ in the category
of vector spaces over $\C$, we obtain the desired isomorphism 
$s_\ast:\hHDF_{X,W}(a_1,a_2)\stackrel{\sim}{\rightarrow} \HDF_{\!c,X,W}(a_1,a_2)$.
\qed
\end{proof}

\

\begin{Proposition}
\label{prop:bdry}
Suppose that $Z_W$ is compact. Then the restricted boundary pairing
\be
\langle \cdot, \cdot \rangle_{c,\,E_1,E_2}:\cA_c(X,Hom(E_1,E_2))\times
\cA_c(X,Hom(E_2,E_1))\longrightarrow \C
\ee
is cohomologically non-degenerate.
\end{Proposition}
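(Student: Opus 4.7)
The plan is to mirror the proof of Theorem A almost verbatim: factor the restricted cohomological boundary pairing through the extended one, which is already known (Lemma \ref{lemma:bdry}) to be cohomologically perfect, by means of two inclusion-induced isomorphisms on cohomology.

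At the chain level, let $j: \cA_c(X,Hom(E_1,E_2)) \hookrightarrow \cA(X,Hom(E_1,E_2))$ and $s: \cA_c(X,Hom(E_2,E_1)) \hookrightarrow \hcA(X,Hom(E_2,E_1))$ denote the canonical inclusions; both are morphisms of the corresponding topological cochain complexes. The definitions of the extended and restricted boundary pairings give the tautology
\[
\langle \alpha, \beta\rangle_{c,E_1,E_2} = \langle j(\alpha), s(\beta)\rangle_{E_1,E_2}
\]
for all $\alpha \in \cA_c(X,Hom(E_1,E_2))$ and $\beta \in \cA_c(X,Hom(E_2,E_1))$. Passing to cohomology yields
\[
\langle u_1, u_2\rangle^H_{c,E_1,E_2} = \langle j_\ast(u_1), s_\ast(u_2)\rangle^H_{E_1,E_2}
\]
for all $u_1 \in \HDF_{c,X,W}(a_1,a_2)$ and all $u_2 \in \HDF_{c,X,W}(a_2,a_1)$.

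Next I would collect three facts. First, by Lemma \ref{lemma:bdry}, the extended cohomological pairing $\langle \cdot, \cdot\rangle^H_{E_1,E_2}$ is non-degenerate on $\HDF_{X,W}(a_1,a_2)\times \hHDF_{X,W}(a_2,a_1)$. Second, Proposition \ref{prop:ssbdry} asserts that $s_\ast:\HDF_{c,X,W}(a_2,a_1)\to \hHDF_{X,W}(a_2,a_1)$ is a linear isomorphism when $Z_W$ is compact. Third, I would need the analogous statement that $j_\ast:\HDF_{c,X,W}(a_1,a_2)\to \HDF_{X,W}(a_1,a_2)$ is a linear isomorphism when $Z_W$ is compact; this is the boundary counterpart of the bulk statement \cite[Proposition 3.7]{nlg1} used in the proof of Theorem A. Granted these three inputs, the restricted cohomological pairing is the pullback of a non-degenerate pairing along a pair of bijections, and is therefore non-degenerate on both sides.

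The only genuinely non-trivial step is the bijectivity of $j_\ast$. Should the boundary version of \cite[Proposition 3.7]{nlg1} not already be on record, I would re-derive it by the same 2-periodic double-complex/spectral-sequence argument used in the proof of Proposition \ref{prop:ssbdry}: both $\cA_c(X,Hom(E_1,E_2))$ and $\cA(X,Hom(E_1,E_2))$ furnish fine (hence $\rGamma$-acyclic) resolutions of the 2-periodic complex of locally-free sheaves $\cE_{a_1,a_2}$ from \eqref{DefComplex}, and compactness of $Z_W$ forces the compactly-supported and ordinary hypercohomologies of $\cE_{a_1,a_2}$ to coincide via Theorem \ref{T10} applied to the inclusion $j$ (precisely as is done in the remark following Proposition \ref{prop:ssbulk} for the bulk case). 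Everything after that is formal linear algebra.
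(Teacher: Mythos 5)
Your proposal is correct and follows essentially the same route as the paper: the paper likewise writes $\langle \alpha,\beta\rangle_{c,\,E_1,E_2}=\langle j(\alpha), s(\beta)\rangle_{E_1,E_2}$, invokes Lemma \ref{lemma:bdry} for cohomological perfection of the extended pairing and Proposition \ref{prop:ssbdry} for bijectivity of $s_\ast$, and cites the result of \cite{nlg1} that $j$ is a quasi-isomorphism (so your fallback spectral-sequence derivation of that fact is not needed, though it is a valid route).
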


\begin{proof}
Consider the inclusion $j:\cA_c(X, Hom(E_1,E_2))\hookrightarrow
\cA(X,Hom(E_1,E_2))$.  Then it was shown in \cite{nlg1} that $j$ is a
quasi-isomorphism of complexes from $(\cA_c(X, Hom(E_1,E_2)),
\updelta_{a_1,a_2})$ to $(\cA(X, Hom(E_1,E_2)), \updelta_{a_1,a_2})$.
Let $j_\ast:\HDF_{\!c,X,W}(a_1,a_2)\stackrel{\sim}{\rightarrow}
\HDF_{X,W}(a_1,a_2)$ be the isomorphism induced by this map on
cohomology. By Proposition \ref{prop:ssbdry}, the inclusion
$s:\cA_c(X,Hom(E_1,E_2))\hookrightarrow
\hcA(X,Hom(E_1,E_2))$ proves to be also a quasi-isomorphism since it
induces an isomorphism
$s_\ast:\HDF_{c,X,W}(a_1,a_2)\!\stackrel{\sim}{\rightarrow}
\hHDF_{X,W}(a_1,a_2)$.

For every $\alpha\!\in\! \cA_c(X,Hom(E_1,E_2))$ and every $\beta\! \in\!
\cA_c(X,Hom(E_2,E_1))$, we have $\langle \alpha,
\beta\rangle_{c,\,E_1,E_2}\!=\!\langle j(\alpha),
s(\beta)\rangle_{E_1,E_2}$. On the cohomological level, this implies
$\langle u,v\rangle_{c,\,H,a_1,a_2}=\langle
j_\ast(u),s_\ast(v)\rangle_{H,a_1,a_2}$ for all $u\in
\HDF_{\!c,X,W}(a_1,a_2)$ and all $v\in \HDF_{\!c,X,W}(a_2,a_1)$.  On
the other hand, $\HDF_{\!c,X,W}(a_1,a_2)$ and
$\HDF_{\!c,X,W}(a_2,a_1)$ are finite-dimensional and $\langle \cdot,
\cdot \rangle_{H,a_1,a_2}$ is non-degenerate by Lemma
\ref{lemma:bdry}. This shows that $\langle \cdot, \cdot
\rangle_{c,H,a_1,a_2}$ is also non-degenerate since $j_\ast$ and
$s_\ast$ are bijective.  \qed
\end{proof}

\subsection{Proof of Theorem B}

\

\begin{proof}
By Proposition \ref{prop:ssbdry}, the inclusion map
$s\!:\!\cA_c(X,Hom(E_1,\!E_2))\!\rightarrow\!  \hcA(X,Hom(E_1,\!E_2))$
induces an isomorphism
$s_\ast\!:\!\HDF_{\!c,X,W}(a_1,a_2)\!\rightarrow\!
\hHDF_{X,W}(a_1,a_2)$. Since $\langle \alpha_1,
\alpha_2\rangle^B_{E_1,E_2}\!\!=\!\langle \alpha_1,
s(\alpha_2)\rangle_{E_1,E_2}$ for all $\alpha_1\in
\cA(X,Hom(E_1,E_2))$ and all $\alpha_2\in \cA_c(X,Hom(E_2,E_1))$, we
have $\langle t_1, t_2\rangle^{BH}_{E_1,E_2}\!=\!\langle t_1,
s_\ast(t_2)\rangle_{E_1,E_2}^H$ for all $t_1\in \HDF_{X,W}(a_1,a_2)$
and all $t_2\in \HDF_{c,X,W}(a_2,a_1)$. Since $\langle \cdot, \cdot
\rangle_{E_1,E_2}^H$ is non-degenerate by Lemma \ref{lemma:bdry} and
$s_\ast$ is injective, it follows that the pairing $\langle \cdot,
\cdot \rangle^{BH}_{a_1,a_2}:\HDF_{X,W}(a_1,a_2)\times
\HDF_{c,X,W}(a_2,a_1)\rightarrow \C$ induced by $\langle\cdot, \cdot
\rangle^B_{E_1,E_2}$ is also non-degenerate.  On the other hand, we
have $\langle t_1,t_2\rangle^c_{a_1,a_2}=\langle
j_\ast(t_1),t_2\rangle_{E_1,E_2}^{BH}$ for all $t_1\in
\HDF_{c,X,W}(a_1,a_2)$ and all $t_2\in \HDF_{c,X,W}(a_2,a_1)$, where
$j_\ast:\HDF_{c,X,W}(a_1,a_2)\rightarrow \HDF_{X,W}(a_1,a_2)$ is the
map induced by the inclusion $j:\cA_c(X,Hom(E_1,E_2))\rightarrow
\cA(X,Hom(E_1,E_2))$ and $\langle \cdot , \cdot \rangle^c_{a_1,a_2}$
is the pairing induced by $\tr_{a_2}^B$ on cohomology (see
\cite[Proposition 6.3]{nlg1}). Since $Z_W$ is compact, the map
$j_\ast$ is an isomorphism by \cite[Proposition 4.11]{nlg1}. This
shows that the pairing $\langle \cdot, \cdot
\rangle_{a_1,a_2}^c:\HDF_{c,X,W}(a_1,a_2)\times
\HDF_{c,X,W}(a_2,a_1)\rightarrow \C$ is non-degenerate. By the results
of \cite{nlg1}, we also have $\langle t_1,
t_2\rangle^c_{a_1,a_2}=\langle
j_\ast(t_1),j_\ast(t_2)\rangle^\Omega_{a_1,a_2}$ for all $t_1\in
\HDF_{c,X,W}(a_1,a_2)$ and all $t_2\in \HDF_{c,X,W}(a_2,a_1)$, which
shows that $\langle \cdot, \cdot \rangle_{a_1,a_2}^\Omega$ is
non-degenerate since $j_\ast$ is bijective. \qed
\end{proof}

As mentioned in the introduction, Theorem \ref{thm:B} can be reformulated using Serre 
functors. For this, we first discuss the shift functor of the category $\HDF(X,W)$.
We refer the reader to Appendix \ref{app:shift} for some notions and properties 
used in the next subsections. 

\subsection{The shift functor of the category $\VB_\sm(X)$}

Let $\VB_\sm(X)$ be the $\Z_2$-graded $\cinf$-linear category of
smooth vector bundles and smooth sections defined in \cite{nlg1}.
This category admits a shift functor $\rPi$ defined as follows:
\begin{enumerate}
\itemsep 0.0em
\item For any $\Z_2$-graded vector bundle $E$ defined on $X$, let
  $\rPi(E)$ denote the $\Z_2$-graded vector bundle with homogeneous
  components:
\be
\rPi (E)^\0\eqdef E^\1~~,~~\rPi (E)^\1\eqdef E^\0~.
\ee
\item For any morphism $s:E_1\rightarrow E_2$ in $\VB_\sm(X)$
  (i.e. for any smooth section $s\in \rGamma_\sm(X, Hom(E_1,E_2))$
  of the vector bundle $Hom(E_1,E_2)=E_1^\vee\otimes E_2$, let:
\be
\rPi(s)\eqdef s~~,
\ee 
where the right hand side is viewed as a section of the bundle $Hom(\rPi(E_1),\rPi(E_2))$.
\end{enumerate}

\begin{remark} 
For any $\Z_2$-graded vector bundle $E$, let $\sigma_E:E \rightarrow
\rPi(E)$ be the {\em suspension morphism} of $E$, i.e. the identity
endomorphism of $E$ viewed as an odd morphism for $\Z_2$-graded vector
bundles from $E$ to $\rPi(E)$. This can be viewed as a section of the
$\Z_2$-graded vector bundle $Hom(E,E)$ and hence as on odd invertible
morphism from $E$ to $\rPi(E)$ in the category $\VB_\sm(X)$.  The
fact that $\rPi$ is a functor follows by noticing the relation:
\be
\rPi(s)=\sigma_{E_2} \circ s\circ \sigma_{E_1}^{-1}~~,~~\forall s\in \rGamma_\sm(X,Hom(E_1,E_2))~~,
\ee
where $\circ$ denotes the composition of $\VB_\sm(X)$. 
Writing $s$ as a block matrix $s=\left[\begin{array}{cc}
    s_{00} & s_{01}\\s_{10} & s_{11} \end{array} \right]$ (where
$s_{\kappa_1\kappa_2}\in \rGamma_\sm(X, Hom(E_1^{\kappa_2},
E_2^{\kappa_1}))$ for all $\kappa_1, \kappa_2\in \Z_2$), we have: 
\be
\rPi(s)_{\kappa_1\kappa_2}=s_{\kappa_1+\1,\kappa_2+\1}~~,
\ee
i.e.:
\be
\rPi(s)=\left[\begin{array}{cc}
    s_{11} & s_{10}\\s_{01} & s_{00} \end{array} \right]~~,
\ee
and the suspension morphism of $E$ corresponds to the matrix:
\be
\sigma_E=\left[\begin{array}{cc}
    0 & \id_{E^\1} \\ \id_{E^\0}  & 0 \end{array} \right]~~.
\ee
\end{remark}

\subsection{The shift functors of $\DF(X,W)$ and $\HDF(X,W)$}

Let $\DF(X,W)$ be the twisted Dolbeault category of the holomorphic LG
pair $(X,W)$ (which is a $\Z_2$-graded $\O(X)$-linear category). Let
$\Sigma$ be the automorphism of the underlying $\Z_2$-graded
$\O(X)$-linear category which is defined as follows:

\begin{enumerate}
\itemsep 0.0em
\item For any holomorphic factorization $(E,D)$ of $W$, let:
\be
\Sigma(E,D)\eqdef(\rPi E,\rPi D)~.
\ee
\item Given two holomorphic factorizations $a_1=(E_1,D_1)$ and
  $a_2=(E_2,D_2)$ of $W$ and a morphism $\alpha \in
  \Hom_{\DF(X,W)}(a_1,a_2)=\cA(X,Hom(E_1,E_2))=\cA(X)\otimes_{\cinf}\rGamma_\sm(X,Hom(E_1,E_2))$,
  let:
\be
\Sigma(\alpha)\eqdef (\id_{\cA(X)}\otimes \rPi)(\alpha)~~.
\ee
\end{enumerate}

\

\begin{remark}
Writing $D=\left[\begin{array}{cc} 0 & G\\ F & 0 \end{array}\right]$
(with $F\in \rGamma(X,Hom(E^\0,E^\1))$ and $G\in
\rGamma(X,Hom(E^\1,E^\0))$), we have $\rPi D\eqdef
\left[\begin{array}{cc} 0 & F\\ G & 0 \end{array}\right]$~.
\end{remark}

\

\begin{Proposition}
$\Sigma$ is a differential shift functor for the $\Z_2$-graded
  $\O(X)$-linear dg-category $\DF(X,W)$.
\end{Proposition}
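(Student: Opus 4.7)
The plan is to unpack the definition of a differential shift functor given in Appendix \ref{app:shift} and verify each requirement in turn: namely, (i) $\Sigma$ sends holomorphic factorizations to holomorphic factorizations, (ii) $\Sigma$ is an $\O(X)$-linear, $\Z_2$-grading-preserving endofunctor of the underlying category which is compatible with composition, (iii) $\Sigma$ commutes with the twisted Dolbeault differentials on each morphism complex, and (iv) $\Sigma$ is involutive, i.e.\ $\Sigma\circ\Sigma=\id_{\DF(X,W)}$.

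For (i), I would expand $D=\left[\begin{array}{cc} 0 & G \\ F & 0 \end{array}\right]$ with respect to the decomposition $E=E^\0\oplus E^\1$, so that the relation $D^2=W\id_E$ is equivalent to $GF=W\id_{E^\0}$ and $FG=W\id_{E^\1}$. Since $\rPi D=\left[\begin{array}{cc} 0 & F \\ G & 0 \end{array}\right]$ acts on $\rPi E=E^\1\oplus E^\0$, a direct block multiplication yields $(\rPi D)^2=W\id_{\rPi E}$, so $\Sigma(a)$ is a holomorphic factorization of $W$. For (ii), the key observation is that $\rPi$ acts as the identity on the underlying fiberwise linear maps, so that a section $f\in\rGamma_\sm(X,Hom^\kappa(E_1,E_2))$ remains a section of $Hom^\kappa(\rPi E_1,\rPi E_2)$ (both source and target parities are flipped, leaving the shift $\kappa$ invariant), and composition is preserved by $\rPi(g\circ f)=\rPi(g)\circ\rPi(f)$ as asserted in the remark preceding the proposition. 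Since $\Sigma$ is the identity on the form factor of $\cA(X,Hom(E_1,E_2))=\cA(X)\otimes_{\cinf}\rGamma_\sm(X,Hom(E_1,E_2))$, both $\O(X)$-linearity and preservation of the $\Z_2$-grading on morphism spaces are immediate.

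For (iii), compatibility with $\bpd_{Hom(E_1,E_2)}$ is automatic, because $\rPi$ does not alter the holomorphic structure of the bundles involved and the Dolbeault operator depends only on that structure. For the defect part, one needs the identities $\rPi D_2\circ\rPi f=\rPi(D_2\circ f)$ and $\rPi f\circ\rPi D_1=\rPi(f\circ D_1)$, which again follow from the fact that $\rPi$ acts as the identity on underlying linear maps. Combined with the observation that the form-degree $i$ and the morphism parity $\kappa$ are unchanged by $\Sigma$, the signs $(-1)^i$ and $(-1)^\kappa$ appearing in \eqref{defect} transport correctly, yielding $\Sigma\circ\updelta_{a_1,a_2}=\updelta_{\Sigma a_1,\Sigma a_2}\circ\Sigma$. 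Property (iv) follows from $\rPi^2=\id$ applied objectwise and morphismwise. The entire argument is essentially formal, so no serious obstacle is expected; the main place where a careless bookkeeping error could enter is in the sign tracking for the defect differential, but since $\rPi$ leaves form-degrees and parities intact, no new signs arise and the intertwining relation holds on the nose.
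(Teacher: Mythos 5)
Your proof is correct and follows the same route as the paper, which simply asserts that ``a simple computation shows that condition \eqref{dSigma} is satisfied''; your write-up supplies exactly that computation (the block-matrix check that $(\rPi D)^2=W\id_{\rPi E}$, the preservation of parity $\kappa$ and form degree $i$, and the resulting intertwining with $\bpd$ and with the defect differential \eqref{defect}), together with the routine functoriality and involutivity checks. Nothing is missing, and your observation that no new signs can arise because $\rPi$ fixes both gradings is precisely the point the paper leaves implicit.
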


\

\begin{proof}
A simple computation shows that condition \eqref{dSigma} is
satisfied. \qed
\end{proof}

\

\noindent Since $\Sigma$ is a differential shift functor on $\DF(X,W)$, it induces a shift 
functor on the $\Z_2$-graded $\O(X)$-linear category $\HDF(X,W)=\rH(\DF(X,W))$, which we again 
denote by $\Sigma$. 

\

\begin{Proposition}
\label{prop:cypc}
Suppose that $Z_W$ is compact. Then $(\HDF(X,W), \tr, \Sigma)$ is a
Calabi-Yau supercategory of parity $\mu=\hat d\in\Z_2$ with compatible shift functor 
in the sense of 
Definition \ref{def:compat} (see Appendix \ref{app:shift}).
\end{Proposition}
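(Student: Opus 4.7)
By Theorem \ref{thm:B}, we already know that $(\HDF(X,W),\tr^\Omega)$ is a Calabi-Yau supercategory of signature $\hat d$: Hom-finiteness, graded cyclicity, and non-degeneracy of the boundary pairings are all established. The preceding proposition shows that $\Sigma$ is a differential shift functor on $\DF(X,W)$, hence descends to a shift functor on the cohomology category $\HDF(X,W)$. What remains is to verify the compatibility conditions of Definition \ref{def:compat}, which couple the involution $\Sigma$ to the cyclic trace family $\tr^\Omega$.

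The first step is formal: one checks that $\Sigma^2 = \id_{\HDF(X,W)}$. On objects this is immediate from $\rPi^2 = \id$ for $\Z_2$-graded vector bundles; on morphisms, $\Sigma$ acts through $\id_{\cA(X)}\otimes \rPi$ and $\rPi^2$ is the identity on the parity-change functor of $\Z_2$-graded vector spaces. One must additionally check that $\Sigma$ commutes with both the Dolbeault differential $\bpd_{Hom(E_1,E_2)}$ and the defect differential $\fd_{a_1,a_2}$; the first is clear since $\Sigma$ only relabels parities without affecting underlying sections, while the second reduces to the identity $\rPi D \circ \rPi(f) = \rPi(D\circ f)$ for $f\in\rGamma_\sm(X,Hom(E_1,E_2))$, which follows from the block-matrix formula for $\rPi$.

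The second and substantive step is to verify the trace compatibility. For any holomorphic factorization $a=(E,D)$ of $W$ and any $\alpha\in\cA_c(X,End(E))$, one computes at the off-shell level how $\tr^B_{\Sigma(a)}(\Sigma(\alpha))$ relates to $\tr^B_a(\alpha)$. The key algebraic fact is that the fiberwise supertrace satisfies $\str_{\rPi E}(\rPi f) = -\str_E(f)$ for every $f \in \rGamma_\sm(X,End(E))$: writing $f$ as a block matrix, the parity change interchanges the two diagonal blocks and hence flips the sign of $\tr(f_{00})-\tr(f_{11})$. Integrating against $\Omega$ gives
\be
\tr^B_{\Sigma(a)}(\Sigma(\alpha)) = -\tr^B_a(\alpha)~,
\ee
and after passing to cohomology this descends to the required relation between $\tr^\Omega_{\Sigma(a)}$ and $\tr^\Omega_a$. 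Combined with the natural isomorphism $\Hom_{\HDF(X,W)}(a_1,\Sigma(a_2)) \simeq \rPi\Hom_{\HDF(X,W)}(a_1,a_2)$ recalled in the introduction, this is precisely the compatibility required by Definition \ref{def:compat}, reconciled with the parity $\mu=\hat d$.

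The main obstacle is sign bookkeeping. Definition \ref{def:compat} prescribes a specific relation between $\Sigma$, the trace family and the signature $\hat d$, so one must verify that the $-1$ arising from the supertrace, any sign produced by the suspension isomorphism $\sigma_E$ entering through $\rPi(s) = \sigma_{E_2}\circ s\circ\sigma_{E_1}^{-1}$, and the sign $(-1)^{\hat d}$ encoding the parity of the CY structure all conspire correctly. Once this bookkeeping is completed, the proposition follows by combining Theorem \ref{thm:B} with the off-shell supertrace computation above.
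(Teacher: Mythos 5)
Your overall route is the same as the paper's: the paper disposes of this proposition in one line, ``Follows immediately from Theorem \ref{thm:B} and the definition of $\Sigma$,'' and your proposal is exactly that argument spelled out --- Theorem \ref{thm:B} supplies the Calabi-Yau supercategory structure, the preceding proposition supplies the shift functor on cohomology, and what remains is condition 3 of Definition \ref{def:compat}. Your verification that $\Sigma^2=\id$ and that $\Sigma$ commutes with $\bpd_{Hom(E_1,E_2)}$ and $\fd_{a_1,a_2}$ is fine, and your key identity $\str_{\rPi E}(\rPi f)=-\str_E(f)$ is correct: the parity change swaps the two diagonal blocks, so $\tr(f_{00})-\tr(f_{11})$ changes sign, whence $\tr^B_{\Sigma(a)}(\Sigma(\alpha))=-\tr^B_a(\alpha)$ at the off-shell level and hence on cohomology.

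The genuine gap is that you stop exactly where the proof has to be finished. Condition 3 of Definition \ref{def:compat} reads $\tr_{\Sigma(a)}(\Sigma(t))=\tr_\Sigma(t)$ (an evident typo for $\tr_a(t)$), and your own computation produces $\tr_{\Sigma(a)}(\Sigma(t))=-\tr_a(t)$, a sign that is $-1$ independently of $d$ (the supertrace kills the odd block of $\alpha$ and the integral against $\Omega$ forces form type $(0,d)$, so the only nonvanishing contributions carry total degree $\hat d$ and even bundle degree, where the $-1$ always appears). You then write ``once this bookkeeping is completed, the proposition follows,'' but the bookkeeping is precisely the content of the proposition: you must either exhibit the compensating sign --- e.g.\ argue that the intended compatibility condition is $\tr_{\Sigma(a)}(\Sigma(t))=-\tr_a(t)$, or that the action of $\Sigma$ on morphisms via the odd suspension $\rPi(s)=\sigma_{E_2}\circ s\circ \sigma_{E_1}^{-1}$ carries a Koszul sign not visible in the block-matrix formula --- or concede that the identity fails as literally stated. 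Asserting that the supertrace sign, the suspension sign and $(-1)^{\hat d}$ ``conspire correctly'' without displaying the conspiracy leaves the one nontrivial claim unproved; everything else in your write-up is routine and the paper treats it as such.
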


\

\begin{proof}
Follows immediately from Theorem \ref{thm:B} and the definition of $\Sigma$. \qed
\end{proof}

\subsection{Proof of Theorem B$\,'$}

\

\

\begin{proof}
Follows immediately from Proposition \ref{prop:cypc} and
Proposition \ref{prop:SerreFunctor} of Appendix \ref{app:shift}. \qed
\end{proof}

\appendix

\section{Linear categories and supercategories with involutive shift functor}

\label{app:shift}

\noindent In this Appendix, we collect some facts regarding linear
categories with involutive shift functors and Serre functors. We
are particularly interested in the case of $\mu$-Calabi-Yau categories in
the sense of \cite{nlg1}. For simplicity, we assume shift functors
and Serre functors to be automorphisms (rather than autoequivalences),
since this case suffices for the purpose of the present paper.

\subsection{The $\Z_2$-graded category $\Mod_R^s$ and its shift functor}

Let $R$ be a unital commutative ring and $\Mod_R$ be the category of
$R$-modules.  Recall that an {\em $R$-supermodule} is a $\Z_2$-graded
$R$-module, i.e. an $R$-module $M$ endowed with a direct sum
decomposition $M=M^\0\oplus M^\1$ into two submodules $M^\0$ and
$M^\1$.  Let $\Mod_R^{\Z_2}$ denote the ordinary category of
$\Z_2$-graded $R$-modules, whose set of morphisms from an
$R$-supermodule $M$ to an $R$-supermodule $N$ is the (ungraded)
$R$-module:
\be
\Hom(M,N)\eqdef \Hom_R(M^\0,N^\0)\oplus \Hom_R(M^\1,N^\1)~~.
\ee
Let $\Mod_R^s$ be the category whose objects are the $R$-supermodules
and whose set of morphisms from an object $M$ to an object $N$ is the
{\em inner Hom $R$-supermodule}\, $\uHom(M,N)$, whose homogeneous
components are defined through:
\beqa
&&\uHom^\0(M,N)\eqdef \Hom_R(M^\0,N^\0)\oplus \Hom_R(M^\1,N^\1)~,\\
&&\uHom^\1(M,N)\eqdef \Hom_R(M^\0,N^\1)\oplus \Hom_R(M^\1,N^\0)~~
\eeqa
and whose composition of morphisms is induced from $\Mod_R$. We have $\uHom^\0(M,N)=\Hom(M,N)$. 

\

\begin{Definition}
The {\em parity change functor $\rPi$} of $\Mod_R^s$ is the
automorphism of $\Mod_R^s$ defined as follows:
\begin{enumerate}
\itemsep 0.0em
\item For any $R$-supermodule $M=M^\0\oplus M^\1$, the $R$-supermodule
  $\rPi(M)$ has homogeneous components:
\be
\rPi(M)^\0\eqdef M^\1~,~~\rPi(M)^\1\eqdef M^\0~~.
\ee 
\item For any morphism $f\in \uHom(M,N)$ of $\Mod_R^s$, the morphism
  $\rPi(f)\in \uHom(\rPi(M),\rPi(N))$ has homogeneous components:
\beqa
&& \rPi(f)^\0\eqdef f^\0\in \Hom_R(M^\0,N^\0)\oplus \Hom_R(M^\1,N^\1)=\uHom^\0(\rPi(M),\rPi(N))~~,\\
&& \rPi(f)^\1\eqdef f^\1\in \Hom_R(M^\0,N^\1)\oplus \Hom_R(M^\1,N^\0)=\uHom^\1(\rPi(M),\rPi(N))~~.
\eeqa
\end{enumerate}
\end{Definition}

\

\noindent It is clear that $\rPi$ is involutive, i.e. we have
$\rPi^2=\id_{\Mod_R^s}$, where $\id_{\Mod_R^s}$ denotes the identity
functor of $\Mod_R^s$. For any $R$-supermodules $M$ and $N$, we have:
\be
\uHom(M,\rPi(N))\simeq \rPi\uHom(M,N)=\uHom(\rPi(M),N)~~,
\ee
where the second equality results from the first upon replacing $M$
and $N$ with $\rPi(M)$ and $\rPi(N)$ respectively.

\subsection{Shift functors on linear supercategories}

Let $\cT$ be an {\em $R$-linear supercategory}, i.e. a category enriched over $\Mod_R^{\Z_2}$. 
A linear functor $F:\cT\rightarrow \cT$ is called {\em even} if the following 
condition holds for any two objects $a$ and $b$ of $\cT$:
\be
F(\Hom_\cT^\kappa(a,b))\subset \Hom_\cT^\kappa(a,b)~~,~~\forall \kappa\in \Z_2~~.
\ee
The {\em even subcategory} $\cT$ is the subcategory
obtained from $\cT$ taking the same objects but keeping only those
morphisms which have degree $\0\in \Z_2$ (without changing the
composition of morphisms). We denote this subcategory by $\Ev(\cT)$ or by $\cT^\0$. 

\

\begin{Definition}
A {\em shift functor} for $\cT$ is an even automorphism $\Sigma$
of $\cT$ which satisfies the following properties:
\begin{enumerate}[1.]
\itemsep 0.0em
\item We have $\Sigma^2=\id_\cT$ .
\item For any two objects $a$ and $b$ of $\cT$, there exist
  isomorphisms of $\Z_2$-graded $R$-modules:
\ben
\label{rho}
\Hom_\cT(a,\Sigma(b))\stackrel{\rho_{ab}}{\longrightarrow} \rPi\Hom_\cT(a,b) 
\een 
which are natural in both $a$ and $b$. More precisely, there exists an isomorphism: 
\be
\rho:\Hom_\cT\circ(\id_\cT\times \Sigma)\stackrel{\sim}{\rightarrow} \rPi\circ \Hom_{\cT}~~
\ee
in the category of functors from $\cT\times \cT$ to $\cT$ and natural
transformations between such.
\end{enumerate}
In this case, the pair $(\cT,\Sigma)$ is called an {\em $R$-linear supercategory with shift}.
\end{Definition}

\

\begin{remark} 
Let $(\cT,\Sigma)$ be an $R$-linear supercategory with
shift. The replacement in \eqref{rho} of $a$ and $b$ by $\Sigma(a)$ and
$\Sigma(b)$ respectively gives isomorphisms:
\be
\Hom_\cT(\Sigma(a),b)=\Hom_\cT(\Sigma(a),\Sigma^2(b))\xrightarrow{\rho_{\Sigma(a)\Sigma(b)}}\rPi\Hom_\cT(\Sigma(a),\Sigma(b)) \stackrel{\Sigma}{\longrightarrow} \rPi\Hom_\cT(a,b)~~,
\ee
where we used the relation $\Sigma^2=\id_\cT$. We thus have a composite isomorphism: 
\be
\Hom_\cT(\Sigma(a),b) \xrightarrow{\Sigma \circ \rho_{\Sigma(a)\Sigma(b)}}\rPi\Hom_\cT(a,b)~~,
\ee
which is natural in both $a$ and $b$. 
\end{remark}

\

\begin{Definition}
Let $(\cT_1,\Sigma_1)$ and $(\cT_2,\Sigma_2)$ be two $R$-linear
supercategories with shifts. A {\em morphism of $R$-linear
  supercategories with shifts} from $(\cT_1,\Sigma_1)$ to
$(\cT_2,\Sigma_2)$ is a linear functor $F:\cT_1\rightarrow \cT_2$ such that
$F\circ \Sigma_1=\Sigma_2\circ F$.
\end{Definition}

\

\noindent With this definition of morphisms, $R$-linear
supercategories with shifts form a category denoted
$\mathrm{RSCat}^s$.

\subsection{$R$-linear categories with involution} 
Let $\cC$ be an $R$-linear category, i.e. a category enriched over
$\Mod_R$.

\

\begin{Definition}
An {\em involution} of $\cC$ is a linear automorphism
$\Sigma$ of $\cC$ such that $\Sigma^2=\id_\cC$. In this case, the pair 
$(\cC,\Sigma)$ is called an {\em $R$-linear category with involution}. 
\end{Definition}

\

\begin{Definition}
Let $(\cC_1,\Sigma_1)$ and $(\cC_2,\Sigma_2)$ be two $R$-linear categories 
with involution. A {\em morphism of $R$-linear categories with involution} 
from $(\cC_1,\Sigma_1)$ to $(\cC_2,\Sigma_2)$ is a linear functor $F:\cC_1\rightarrow \cC_2$ 
such that $F\circ \Sigma_1=\Sigma_2\circ F$. 
\end{Definition}

\

\noindent With this definition, $R$-linear categories with involution form a category denoted $\mathrm{RICat}$. 

\subsection{Supercompletion of an $R$-linear category with involution}

An $R$-linear category with involution can be completed to a $\Z_2$-graded category as follows.

\

\begin{Definition}
Let $(\cC,\Sigma)$ be an $R$-linear category with involution. The {\em
  supercompletion} of $\cC$ along $\Sigma$ is the $R$-linear
$\Z_2$-graded category $\Gr_\Sigma(\cC)$ defined as follows:
\begin{enumerate}
\itemsep 0.0em
\item The objects of $\Gr_\Sigma(\cC)$ coincide with those of $\cC$.
\item For any objects $a,b$ of $\cC$ and any $\kappa\in \Z_2$, the
  $R$-module of morphisms from $a$ to $b$ in $\cC$ has the
  $\Z_2$-grading given by the decomposition
  $\Hom_{\Gr_\Sigma(\cC)}(a,b)=\Hom_{\Gr_\Sigma(\cC)}^\0(a,b)\oplus
  \Hom^\1_{\Gr_\Sigma(\cC)}(a,b)$, where:
\be
\Hom_{\Gr_\Sigma(\cC)}^\kappa(a,b) \eqdef \Hom_{\cC}(a,\Sigma^\kappa(b))~,~\forall \kappa\in \Z_2~~.
\ee
\item Given three objects $a,b,c$ of $\cC$, the $R$-bilinear composition of morphisms 
$\circ:\Hom_{\Gr_\Sigma(\cC)}(b,c)\times \Hom_{\Gr_\Sigma(\cC)}(a,b)\rightarrow \Hom_{\Gr_\Sigma(\cC)}(a,c)$ 
of $\Gr_\Sigma(\cC)$ is uniquely determined by the condition: 
\be
g\circ_{\Gr_\Sigma(\cC)} f\eqdef \Sigma^\kappa(g)\circ f\in \Hom_{\cC}(a,\Sigma^{\kappa+\nu}(c))=\Hom_{\Gr_\Sigma(\cC)}^{\kappa+\nu}(a,c)~~,
\ee
for $f\in \Hom_{\Gr_\Sigma(\cC)}^{\kappa}(a,b)=
\Hom_{\cC}(a,\Sigma^\kappa(b))$ and $g\in
\Hom_{\Gr_\Sigma(\cC)}^{\nu}(b,c)= \Hom_{\cC}(b,\Sigma^\nu(c))$ (where
$\kappa,\nu\in \Z_2$).
\end{enumerate}
\end{Definition}

\

\noindent The proof of the following statements is elementary and left to the reader:

\

\begin{Proposition}
Let $(\cC,\Sigma)$ be an $R$-linear category with involution.
Consider the functor $\Gr(\Sigma):\Gr_\Sigma(\cC)\rightarrow
\Gr_\Sigma(\cC)$ defined as follows:
\begin{enumerate}
\itemsep 0.0em
\item For any object $a$ of $\cC$, let $\Gr(\Sigma)(a)\eqdef
  \Sigma(a)$.
\item For any morphism $f=u\oplus v\in
  \Hom_{\Gr(\cC)}(a,b)=\Hom_{\cC}(a,b)\oplus \Hom_{\cC}(a,\Sigma(b))$
  (where $u\in \Hom_{\cC}(a,b)$ and $v\in \Hom_{\cC}(a,\Sigma(b))$),
  let: 
\be
\Gr(\Sigma)(f)\eqdef \Sigma(u)\oplus \Sigma(v) \in
  \Hom_{\Gr_\Sigma(\cC)}(\Sigma(a),\Sigma(b))=\Hom_{\cC}(\Sigma(a),\Sigma(b))\oplus \Hom_{\cC}(\Sigma(a),b)~~.
\ee
\end{enumerate}
Then $\Gr(\Sigma)$ is a shift functor for the supercompletion $\Gr_\Sigma(\cC)$.
\end{Proposition}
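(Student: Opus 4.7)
The plan is to verify, one by one, the three defining properties that make $\Gr(\Sigma)$ a shift functor for $\Gr_\Sigma(\cC)$: being an even automorphism, squaring to the identity, and providing a natural family of $\Z_2$-graded isomorphisms of the form \eqref{rho}. In this situation all three properties reduce to straightforward bookkeeping once the definitions of $\Gr_\Sigma(\cC)$ and $\Gr(\Sigma)$ are unpacked, so the proof is essentially a matter of carefully tracking parities.

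First, I would check that $\Gr(\Sigma)$ is well-defined as a linear endofunctor. For homogeneous morphisms $f \in \Hom_{\Gr_\Sigma(\cC)}^\kappa(a,b) = \Hom_\cC(a,\Sigma^\kappa(b))$, applying $\Sigma$ yields an element of $\Hom_\cC(\Sigma(a),\Sigma^{\kappa+1}(b)) = \Hom_\cC(\Sigma(a),\Sigma^\kappa(\Sigma(b))) = \Hom_{\Gr_\Sigma(\cC)}^\kappa(\Sigma(a),\Sigma(b))$, so $\Gr(\Sigma)$ preserves the $\Z_2$-grading and is therefore even. Functoriality reduces to the identity $\Sigma(g \circ f) = \Sigma(g) \circ \Sigma(f)$ in $\cC$ combined with the composition rule of $\Gr_\Sigma(\cC)$; for $f \in \Hom_{\Gr_\Sigma(\cC)}^\kappa(a,b)$ and $g \in \Hom_{\Gr_\Sigma(\cC)}^\nu(b,c)$ one has $\Gr(\Sigma)(g \circ_{\Gr_\Sigma(\cC)} f) = \Sigma(\Sigma^\kappa(g) \circ f) = \Sigma^{\kappa+1}(g) \circ \Sigma(f) = \Sigma^\kappa(\Gr(\Sigma)(g)) \circ \Gr(\Sigma)(f) = \Gr(\Sigma)(g) \circ_{\Gr_\Sigma(\cC)} \Gr(\Sigma)(f)$, and $\Gr(\Sigma)(\id_a) = \id_{\Sigma(a)}$ is immediate. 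Since $\Sigma$ is an automorphism of $\cC$, it induces $R$-linear isomorphisms on each hom-module and is bijective on objects, hence $\Gr(\Sigma)$ is an automorphism of $\Gr_\Sigma(\cC)$.

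Second, the relation $\Gr(\Sigma)^2 = \id_{\Gr_\Sigma(\cC)}$ follows directly from $\Sigma^2 = \id_\cC$: on objects $\Gr(\Sigma)^2(a) = \Sigma^2(a) = a$, and on a morphism $f = u \oplus v \in \Hom_\cC(a,b) \oplus \Hom_\cC(a,\Sigma(b))$ one has $\Gr(\Sigma)^2(f) = \Sigma^2(u) \oplus \Sigma^2(v) = u \oplus v = f$.

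Third, for the naturality data \eqref{rho}, note that by definition
\be
\Hom_{\Gr_\Sigma(\cC)}^\kappa(a,\Gr(\Sigma)(b)) = \Hom_\cC(a,\Sigma^{\kappa+1}(b)) = \Hom_{\Gr_\Sigma(\cC)}^{\kappa+1}(a,b) = (\rPi\Hom_{\Gr_\Sigma(\cC)}(a,b))^\kappa
\ee
for each $\kappa \in \Z_2$. Thus the identity map $\rho_{ab}$ of the underlying $R$-modules is an isomorphism of $\Z_2$-graded $R$-modules $\Hom_{\Gr_\Sigma(\cC)}(a,\Gr(\Sigma)(b)) \xrightarrow{\sim} \rPi\Hom_{\Gr_\Sigma(\cC)}(a,b)$. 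Naturality in both $a$ and $b$ reduces to the fact that pre- and post-composition in $\Gr_\Sigma(\cC)$ are defined in the same way under both identifications. The only mildly delicate point is ensuring that the identifications respect the composition rule of $\Gr_\Sigma(\cC)$ when the variable argument is itself shifted, but this is again a direct check using $\Sigma^2 = \id_\cC$ and is the closest thing to an obstacle in this otherwise formal verification.
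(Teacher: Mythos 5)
Your verification is correct and is precisely the elementary direct check that the paper leaves to the reader (the paper states "The proof of the following statements is elementary and left to the reader" and supplies no argument of its own). All three points — evenness and functoriality via $\Sigma(\Sigma^\kappa(g)\circ f)=\Sigma^{\kappa+1}(g)\circ\Sigma(f)$, involutivity from $\Sigma^2=\id_\cC$, and the identification $\Hom^\kappa_{\Gr_\Sigma(\cC)}(a,\Gr(\Sigma)(b))=\Hom_\cC(a,\Sigma^{\kappa+1}(b))=(\rPi\Hom_{\Gr_\Sigma(\cC)}(a,b))^\kappa$ with its naturality — are handled correctly.
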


\

\begin{Proposition}
Let $F:(\cC_1,\Sigma_1)\rightarrow (\cC_2,\Sigma_2)$ be a morphism of
$R$-linear categories with involution. Consider the functor
$\Gr(F):\Gr_{\Sigma_1}(\cC_1)\rightarrow \Gr_{\Sigma_2}(\cC_2)$
defined through:
\begin{enumerate}
\itemsep 0.0em
\item For any object $a$ of $\cC_1$, set $\Gr(F)(a)\eqdef F(a)$.
\item For any morphism $f\!=\!u\oplus v\in
 \! \Hom_{\Gr_{\Sigma_1}\!(\cC_1)}\!(a,b)$, where $u\in\!\Hom_{\cC_1}\!(a,b)$ and
  $v\in \!\Hom_{\cC_1}(a,\Sigma_1(b))$, set:
\be
\Gr(F)(f)\!\eqdef \!F(u) \oplus F(v) \in\! \Hom_{\Gr_{\Sigma_2}(\cC_2)}(\!F(a),\!F(b))\!=\!\Hom_{\cC_2}(\!F(a),\!F(b))\oplus \Hom_{\cC_2}(\!F(a),\!\Sigma_2(\!F(b)))~~,
\ee
where we used the relation $F\circ \Sigma_1=\Sigma_2\circ F$.
\end{enumerate}
Then $\Gr(F):(\Gr_{\Sigma_1}(\cC_1),\Gr(\Sigma_1))\rightarrow
(\Gr_{\Sigma_2}(\cC_2),\Gr(\Sigma_2))$ is a morphism of $R$-linear
supercategories with shift.
\end{Proposition}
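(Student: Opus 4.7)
The plan is to unpack the three defining properties of a morphism in $\mathrm{RSCat}^s$: that $\Gr(F)$ is a well-defined $R$-linear functor between the two supercompletions, that it respects the $\Z_2$-grading on Hom modules (i.e. is even), and that it intertwines the shift functors $\Gr(\Sigma_1)$ and $\Gr(\Sigma_2)$. Each condition will reduce to a combination of the functoriality and $R$-linearity of $F$ with the intertwining relation $F \circ \Sigma_1 = \Sigma_2 \circ F$ assumed of $F$.

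First I would verify that $\Gr(F)$ is well defined and even. By construction, $\Hom_{\Gr_{\Sigma_i}(\cC_i)}^\kappa(a,b) = \Hom_{\cC_i}(a,\Sigma_i^\kappa(b))$ for $\kappa \in \Z_2$ and $i \in \{1,2\}$, so for $u \in \Hom_{\cC_1}(a, \Sigma_1^\kappa(b))$, the image $F(u)$ lies in $\Hom_{\cC_2}(F(a), F(\Sigma_1^\kappa(b))) = \Hom_{\cC_2}(F(a), \Sigma_2^\kappa(F(b)))$ by applying the intertwining relation $\kappa$ times; this equals $\Hom_{\Gr_{\Sigma_2}(\cC_2)}^\kappa(F(a), F(b))$, so degree is preserved. $R$-linearity on each homogeneous component is inherited from the $R$-linearity of $F$ on the corresponding $\Hom_{\cC_1}$-module.

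Next I would check functoriality. Identities are preserved because $F$ preserves them in $\cC_1$. For composition, let $f \in \Hom_{\Gr_{\Sigma_1}(\cC_1)}^\kappa(a,b)$ and $g \in \Hom_{\Gr_{\Sigma_1}(\cC_1)}^\nu(b,c)$ be homogeneous. Then $g \circ_{\Gr} f = \Sigma_1^\kappa(g) \circ f$; applying $F$ and using its functoriality together with $F \circ \Sigma_1^\kappa = \Sigma_2^\kappa \circ F$ gives $F(\Sigma_1^\kappa(g) \circ f) = \Sigma_2^\kappa(F(g)) \circ F(f) = \Gr(F)(g) \circ_{\Gr} \Gr(F)(f)$, which is the required compatibility with the twisted composition in $\Gr_{\Sigma_2}(\cC_2)$.

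Finally, for the shift compatibility $\Gr(F) \circ \Gr(\Sigma_1) = \Gr(\Sigma_2) \circ \Gr(F)$, both sides agree on objects since $F(\Sigma_1(a)) = \Sigma_2(F(a))$, and on a morphism $f = u \oplus v \in \Hom_{\cC_1}(a,b) \oplus \Hom_{\cC_1}(a,\Sigma_1(b))$ both sides yield $F(\Sigma_1(u)) \oplus F(\Sigma_1(v)) = \Sigma_2(F(u)) \oplus \Sigma_2(F(v))$, again by the intertwining relation. The entire verification is formal; the only care required is bookkeeping of the degree shift introduced in the $\Gr$-composition, and I do not anticipate any genuine obstacle.
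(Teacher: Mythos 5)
Your verification is correct and complete: the paper itself omits the proof, stating only that it is ``elementary and left to the reader,'' and your direct check of evenness, functoriality with respect to the twisted composition $g\circ_{\Gr}f=\Sigma^\kappa(g)\circ f$, and the intertwining $\Gr(F)\circ\Gr(\Sigma_1)=\Gr(\Sigma_2)\circ\Gr(F)$ is exactly the routine argument intended. Nothing is missing.
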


\

\begin{Proposition}
$\Gr$ is a functor from $\mathrm{RICat}$ to $\mathrm{RSCat}^s$. 
\end{Proposition}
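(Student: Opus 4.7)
The previous three propositions have already verified the bulk of what is required: they show that $\Gr_\Sigma(\cC)$ together with $\Gr(\Sigma)$ is an $R$-linear supercategory with shift, and that $\Gr(F)$ is a morphism in $\mathrm{RSCat}^s$ whenever $F$ is a morphism in $\mathrm{RICat}$. Hence the assignment $\Gr$ is well-defined on objects and morphisms, and the only remaining task is to verify the two functoriality axioms: preservation of identities and preservation of composition.

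For preservation of identities, consider an object $(\cC,\Sigma)$ of $\mathrm{RICat}$. The identity morphism in $\mathrm{RICat}$ is $\id_\cC$, which trivially satisfies $\id_\cC\circ \Sigma=\Sigma\circ \id_\cC$. On objects of $\Gr_\Sigma(\cC)$, the functor $\Gr(\id_\cC)$ acts as $a\mapsto \id_\cC(a)=a$. On a homogeneous morphism $f=u\oplus v\in \Hom_{\Gr_\Sigma(\cC)}(a,b)=\Hom_\cC(a,b)\oplus \Hom_\cC(a,\Sigma(b))$, it acts by $\Gr(\id_\cC)(f)=\id_\cC(u)\oplus \id_\cC(v)=f$, so $\Gr(\id_\cC)=\id_{\Gr_\Sigma(\cC)}$.

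For preservation of composition, take a pair of composable morphisms $F\colon(\cC_1,\Sigma_1)\to (\cC_2,\Sigma_2)$ and $G\colon(\cC_2,\Sigma_2)\to (\cC_3,\Sigma_3)$ in $\mathrm{RICat}$. On objects, $\Gr(G\circ F)(a)=G(F(a))=\Gr(G)(\Gr(F)(a))$. On a morphism $f=u\oplus v\in \Hom_{\Gr_{\Sigma_1}(\cC_1)}(a,b)$, the definition of $\Gr$ on morphisms gives both
\[
\Gr(G\circ F)(f)=(G\circ F)(u)\oplus (G\circ F)(v)
\]
and
\[
(\Gr(G)\circ \Gr(F))(f)=\Gr(G)\bigl(F(u)\oplus F(v)\bigr)=G(F(u))\oplus G(F(v)),
\]
which coincide. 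Combining, $\Gr(G\circ F)=\Gr(G)\circ \Gr(F)$, so $\Gr$ is a functor.

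There is no real obstacle here: once the earlier propositions establish that the object and morphism assignments land in $\mathrm{RSCat}^s$, the verification of the functor axioms is purely formal, following directly from the definitions of $\Gr(\Sigma)$ and $\Gr(F)$ on objects and on the two homogeneous components of the graded hom-modules.
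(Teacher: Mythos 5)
Your proof is correct and matches the paper's intent exactly: the paper explicitly leaves this verification to the reader as elementary, and your check of the identity and composition axioms (with well-definedness supplied by the preceding propositions) is precisely the routine argument being omitted.
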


\subsection{The even subcategory of an $R$-linear supercategory with shift}

A quasi-inverse of the supercompletion functor $\Gr$ can be constructed as follows, where the proof 
of the various statements is left to the reader. 

\

\begin{Proposition}
Let $(\cT,\Sigma)$ be an $R$-linear supercategory with shift. 
Then $\Sigma$ is an involution of the even subcategory $\cT^\0$. 
\end{Proposition}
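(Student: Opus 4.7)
The plan is to unpack the definitions and observe that essentially every clause is immediate. First, recall that a shift functor on $\cT$ is by definition an even automorphism $\Sigma$ of $\cT$ with $\Sigma^2 = \id_\cT$, where ``even'' means that for any objects $a,b$ of $\cT$ and any $\kappa\in \Z_2$ one has
\[
\Sigma\bigl(\Hom_\cT^\kappa(a,b)\bigr)\subset \Hom_\cT^\kappa(\Sigma(a),\Sigma(b))~~.
\]
The first step is to use this property with $\kappa=\0$ to conclude that $\Sigma$ restricts to a well-defined functor $\Sigma|_{\cT^\0}:\cT^\0\rightarrow \cT^\0$: on objects it acts as before, and on the $R$-module $\Hom_{\cT^\0}(a,b)=\Hom_\cT^\0(a,b)$ it lands in $\Hom_\cT^\0(\Sigma(a),\Sigma(b))=\Hom_{\cT^\0}(\Sigma(a),\Sigma(b))$.

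Next, I would check that this restriction is $R$-linear and functorial. Both are automatic: $R$-linearity is inherited from $R$-linearity of $\Sigma$ on each morphism space of $\cT$, and the identities $\Sigma(\id_a)=\id_{\Sigma(a)}$ and $\Sigma(g\circ f)=\Sigma(g)\circ \Sigma(f)$ already hold in $\cT$, hence in particular on pairs of degree-$\0$ morphisms.

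Finally, I would establish invertibility. Since $\Sigma^2=\id_\cT$, the functor $\Sigma$ is its own inverse on $\cT$. The inverse $\Sigma^{-1}=\Sigma$ is also even, so it likewise restricts to an endofunctor of $\cT^\0$, and the restrictions are mutually inverse because
\[
(\Sigma|_{\cT^\0})\circ (\Sigma|_{\cT^\0})=(\Sigma\circ \Sigma)|_{\cT^\0}=\id_\cT|_{\cT^\0}=\id_{\cT^\0}~~.
\]
This exhibits $\Sigma|_{\cT^\0}$ as an $R$-linear automorphism of $\cT^\0$ squaring to the identity, i.e.\ as an involution. There is no serious obstacle here; the content of the statement is just that the ``even'' hypothesis in the definition of a shift functor is precisely what is needed for the restriction to $\cT^\0$ to make sense, after which involutivity is inherited from $\cT$.
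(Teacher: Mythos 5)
Your proof is correct and is exactly the elementary unpacking the paper intends: the authors explicitly leave this verification to the reader, and your argument (evenness gives the restriction to $\cT^\0$, linearity and functoriality are inherited, and $\Sigma^2=\id_\cT$ restricts to $\id_{\cT^\0}$) is the standard one. Note only that the paper's displayed definition of ``even'' contains a typo ($F(\Hom_\cT^\kappa(a,b))\subset \Hom_\cT^\kappa(a,b)$ rather than $\Hom_\cT^\kappa(F(a),F(b))$); you have correctly read it in the intended sense.
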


\

\begin{Proposition}
Given a morphism of $R$-linear supercategories with shifts
$F:(\cT_1,\Sigma_1)\rightarrow (\cT_2,\Sigma_2)$, consider the functor
$\Ev(f):\Ev(\cT_1)=\cT_1^\0\rightarrow \Ev(\cT_2)=\cT_2^\0$ obtained by
restricting $F$ to the subcategory $\cT_1^\0$ of $\cT_1$.  Then
$\Ev(f)$ is a morphism in $\mathrm{RICat}$ from $(\cT_1^\0,\Sigma_1)$ to $(\cT_2^\0,\Sigma_2)$.
\end{Proposition}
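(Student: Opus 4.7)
The plan is simply to unwind the definitions and check the two conditions required of a morphism in $\mathrm{RICat}$: that $\Ev(F)$ is a well-defined $R$-linear functor $\cT_1^\0\to\cT_2^\0$ and that it commutes with the involutions.

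First, I would address well-definedness. A linear functor $F$ between $R$-linear supercategories is by definition enriched over $\Mod_R^{\Z_2}$, hence for any objects $a,b$ of $\cT_1$ the induced map $F_{a,b}:\Hom_{\cT_1}(a,b)\to \Hom_{\cT_2}(F(a),F(b))$ is a morphism in $\Mod_R^{\Z_2}$, i.e.\ it preserves the $\Z_2$-grading. In particular $F_{a,b}\bigl(\Hom_{\cT_1}^{\0}(a,b)\bigr)\subset \Hom_{\cT_2}^{\0}(F(a),F(b))$, so restricting $F$ to morphisms of degree $\0$ gives a well-defined $R$-linear functor $\Ev(F):\cT_1^\0\to \cT_2^\0$, and this restriction preserves composition and identities because $F$ does.

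Next, I would verify compatibility with the involutions. By the previous proposition, $\Sigma_1$ restricts to an involution of $\cT_1^\0$ (and likewise $\Sigma_2$ restricts to an involution of $\cT_2^\0$); abusing notation, denote these restrictions by the same symbols. The hypothesis on $F$ as a morphism in $\mathrm{RSCat}^s$ is the equality of functors $F\circ\Sigma_1=\Sigma_2\circ F:\cT_1\to\cT_2$. Restricting both sides to the subcategory $\cT_1^\0$ gives $\Ev(F)\circ\Sigma_1=\Sigma_2\circ\Ev(F)$ as functors $\cT_1^\0\to\cT_2^\0$, which is exactly the condition for $\Ev(F)$ to be a morphism in $\mathrm{RICat}$.

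There is really no obstacle here: the statement is a pure diagram-chasing unwinding of the definitions of ``even functor'', ``morphism of $R$-linear supercategories with shift'', and ``morphism of $R$-linear categories with involution''. The only mildly delicate point worth highlighting is that one must observe that linearity of $F$ in the enriched sense already forces $F$ to be even, so the restriction to $\cT_1^\0$ lands in $\cT_2^\0$; once this is noted, everything else is automatic.
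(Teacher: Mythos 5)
Your proof is correct; the paper explicitly leaves this verification to the reader, and your argument is exactly the intended definition-unwinding. The one point genuinely worth making explicit is the one you do make: enrichment over $\Mod_R^{\Z_2}$ forces $F_{a,b}$ to preserve the $\Z_2$-grading, so the restriction lands in $\cT_2^\0$, after which the identity $\Ev(F)\circ\Sigma_1=\Sigma_2\circ\Ev(F)$ is just the restriction of $F\circ\Sigma_1=\Sigma_2\circ F$ to the even subcategory.
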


\

\begin{Proposition}
$\Ev$ is a functor from $\mathrm{RSCat}^s$ to $\mathrm{RICat}$.
\end{Proposition}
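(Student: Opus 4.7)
The statement asserts that $\mathrm{Ev}$ is a functor from $\mathrm{RSCat}^s$ to $\mathrm{RICat}$. Since the object-level and morphism-level assignments have already been set up by the two preceding propositions, the plan is simply to verify that these assignments are compatible with identities and composition, i.e.\ that the axioms of a functor hold.

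First, I would confirm that the assignments are well-defined at the two levels. On objects, the preceding proposition shows that if $(\cT,\Sigma)$ is an $R$-linear supercategory with shift, then $\Sigma$ is an involution of the even subcategory $\cT^\0$, so $\mathrm{Ev}(\cT,\Sigma)\eqdef (\cT^\0,\Sigma|_{\cT^\0})$ is indeed an object of $\mathrm{RICat}$. On morphisms, the preceding proposition shows that for $F:(\cT_1,\Sigma_1)\rightarrow (\cT_2,\Sigma_2)$ in $\mathrm{RSCat}^s$, the restriction $\mathrm{Ev}(F):\cT_1^\0\rightarrow \cT_2^\0$ is well-defined (it lands in $\cT_2^\0$ because $F$ preserves the $\Z_2$-grading on Hom spaces) and satisfies the intertwining relation $\mathrm{Ev}(F)\circ \Sigma_1=\Sigma_2\circ \mathrm{Ev}(F)$, so it is a morphism in $\mathrm{RICat}$.

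The remaining genuinely new content is functoriality. For identities, $\mathrm{Ev}(\id_\cT)$ is by definition the restriction of $\id_\cT$ to the subcategory $\cT^\0$, which equals $\id_{\cT^\0}=\id_{\mathrm{Ev}(\cT,\Sigma)}$. For composition, given composable morphisms $F:(\cT_1,\Sigma_1)\rightarrow (\cT_2,\Sigma_2)$ and $G:(\cT_2,\Sigma_2)\rightarrow (\cT_3,\Sigma_3)$ in $\mathrm{RSCat}^s$, I would note that since $F$ carries $\cT_1^\0$ into $\cT_2^\0$, the restriction of $G\circ F$ to $\cT_1^\0$ factors through $\cT_2^\0$ and coincides with $(G|_{\cT_2^\0})\circ (F|_{\cT_1^\0})$; this yields the identity $\mathrm{Ev}(G\circ F)=\mathrm{Ev}(G)\circ \mathrm{Ev}(F)$.

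The proof is essentially a bookkeeping check and there is no real obstacle; the only point that requires any care is that $\mathrm{Ev}(F)$ actually takes values in the correct even subcategory, but this is precisely the content already recorded in the preceding proposition. Consequently, once the two preceding propositions are in hand, the verification reduces to the two short equalities above.
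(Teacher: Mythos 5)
Your proof is correct and is exactly the routine verification the paper has in mind: the paper explicitly leaves the proofs of these propositions to the reader, and your check of well-definedness (via the two preceding propositions) plus preservation of identities and composition is the standard and essentially unique argument.
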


\

\noindent Finally, one easily proves the following:

\

\begin{Theorem}
The functors $\Gr$ and $\Ev$ are mutually quasi-inverse equivalences
between $\mathrm{RICat}$ and $\mathrm{RSCat}^s$.
\end{Theorem}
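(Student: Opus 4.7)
The plan is to construct natural isomorphisms $\eta : \id_{\mathrm{RICat}} \Rightarrow \Ev \circ \Gr$ and $\epsilon : \Gr \circ \Ev \Rightarrow \id_{\mathrm{RSCat}^s}$, and to verify their compatibility with the morphisms in each category. Checking that these are mutually quasi-inverse will then be immediate.

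For the easy direction $\Ev \circ \Gr \simeq \id_{\mathrm{RICat}}$: given $(\cC,\Sigma) \in \mathrm{RICat}$, by construction $\Ev(\Gr_\Sigma(\cC))$ has the same objects as $\cC$ and morphism modules $\Hom^\0_{\Gr_\Sigma(\cC)}(a,b) = \Hom_\cC(a,\Sigma^\0(b)) = \Hom_\cC(a,b)$. The composition rule of $\Gr_\Sigma(\cC)$ specializes to $g \circ_{\Gr_\Sigma(\cC)} f = \Sigma^\0(g) \circ f = g \circ f$, so $\Ev(\Gr_\Sigma(\cC))$ coincides with $\cC$ on the nose, and the restricted shift $\Gr(\Sigma)|_{\cC^\0}$ equals $\Sigma$. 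Functoriality of this identification with respect to morphisms $F \in \mathrm{RICat}$ is immediate from the formulas defining $\Gr(F)$ and $\Ev(F)$, so $\eta$ can be taken to be the identity.

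For the harder direction $\Gr \circ \Ev \simeq \id_{\mathrm{RSCat}^s}$: given $(\cT,\Sigma) \in \mathrm{RSCat}^s$, the shift functor axiom supplies natural isomorphisms $\rho_{a,b} : \Hom_\cT(a,\Sigma(b)) \stackrel{\sim}{\to} \rPi \Hom_\cT(a,b)$ of $\Z_2$-graded $R$-modules. Taking degree $\0$ components yields natural isomorphisms
\be
\rho^\0_{a,b} : \Hom^\0_\cT(a,\Sigma(b)) \stackrel{\sim}{\longrightarrow} \Hom^\1_\cT(a,b)~~.
\ee
I would define $\epsilon_\cT : \Gr_\Sigma(\cT^\0) \to \cT$ as the identity on objects, the identity on degree-$\0$ morphisms (where both sides are $\Hom^\0_\cT(a,b)$), and $\rho^\0_{a,b}$ on degree-$\1$ morphisms (where the left-hand side is $\Hom^\0_\cT(a,\Sigma(b))$ and the right-hand side is $\Hom^\1_\cT(a,b)$). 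By construction $\epsilon_\cT$ is bijective on objects, degreewise an $R$-linear isomorphism on morphism modules, and intertwines $\Gr(\Sigma)$ with $\Sigma$.

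The main obstacle will be verifying that $\epsilon_\cT$ preserves composition. One must check four cases according to the parities of the composed morphisms; the nontrivial ones require comparing $g \circ_{\Gr_\Sigma(\cT^\0)} f = \Sigma^\kappa(g) \circ f$ with the composition in $\cT$ of the images of $f$ and $g$ under $\rho^\0$. The key identity, for $f \in \Hom^\0_\cT(a,b)$ and $g \in \Hom^\0_\cT(b,\Sigma(c))$, is $\rho^\0_{a,c}(g \circ f) = \rho^\0_{b,c}(g) \circ f$, which is precisely the naturality square of $\rho$ in the first variable; the mixed and double-odd cases follow analogously using naturality in the second variable together with $\Sigma^2 = \id_\cT$ and $\Sigma(u \circ v) = \Sigma(u) \circ \Sigma(v)$. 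Once $\epsilon_\cT$ is known to be a functor, naturality of the collection $(\epsilon_\cT)_\cT$ in $\cT$ reduces to the compatibility of a morphism $F \in \mathrm{RSCat}^s$ with the structural natural isomorphisms $\rho$ on source and target, which follows from the construction of $\Gr(\Ev(F))$ on morphisms. This completes the verification that $(\eta,\epsilon)$ exhibit the desired equivalence.
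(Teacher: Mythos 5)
The paper itself offers no argument for this theorem (the text merely says ``one easily proves the following''), so you are supplying a proof the authors omit, and your overall strategy --- taking the unit $\eta$ to be the identity and building the counit $\epsilon_\cT:\Gr_\Sigma(\cT^\0)\to\cT$ out of the structural isomorphisms $\rho_{a,b}$ --- is certainly the intended one. Your treatment of $\Ev\circ\Gr\simeq\id$ is correct, and the two mixed-parity composition cases do follow exactly as you say from ordinary naturality of $\rho$ in the first, respectively second, variable.

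The double-odd case, however, is a genuine gap rather than an ``analogous'' one. There you must show, for $f\in\Hom_\cT^\0(a,\Sigma(b))$ and $g\in\Hom_\cT^\0(b,\Sigma(c))$, that $\rho^\0_{b,c}(g)\circ\rho^\0_{a,b}(f)=\Sigma(g)\circ f$ in $\Hom^\0_\cT(a,c)$. Ordinary naturality of $\rho$ (i.e.\ with respect to even morphisms) never lets you move an odd morphism such as $\rho^\0_{a,b}(f)$ across $\rho$; and even if you upgrade to enriched naturality --- so that by the enriched Yoneda lemma $\rho_{a,b}$ is post-composition with the odd isomorphism $\varsigma_b\eqdef\rho_{\Sigma(b),b}(\id_{\Sigma(b)})$ and naturality in the second variable forces $\Sigma(g)=\varsigma_{\Sigma(c)}^{-1}\circ g\circ\varsigma_b$ --- the two sides of the desired identity still differ by the even automorphism $\varsigma_c\circ\varsigma_{\Sigma(c)}$ of $c$, which the axioms as stated do not force to equal $\id_c$: rescaling every $\varsigma_b$ by a unit $\lambda\in R^\times$ with $\lambda^2\neq 1$ produces a legitimate $\rho$ for which the identity fails. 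The same issue infects your claim that $\epsilon_\cT$ intertwines $\Gr(\Sigma)$ with $\Sigma$, which amounts to the unchecked compatibility $\rho_{\Sigma(a),\Sigma(b)}\circ\Sigma=\Sigma\circ\rho_{a,b}$. To close the argument you must either build these coherences into the definition of a shift functor, or --- equivalently, and as the paper does concretely for $\VB_\sm(X)$ and $\DF(X,W)$ --- encode the shift by a family of odd suspension isomorphisms $\sigma_b:b\to\Sigma(b)$ satisfying $\sigma_{\Sigma(b)}=\sigma_b^{-1}$ and $\Sigma(f)=\sigma_b\circ f\circ\sigma_a^{-1}$, after which all four composition cases and the $\Sigma$-equivariance of $\epsilon_\cT$ follow by direct computation.
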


\

\noindent This shows, in particular, that $R$-linear supercategories with shift
can be reconstructed from their even part, which is an $R$-linear
category with involution.

\subsection{Calabi-Yau supercategories with shift}

In this subsection we consider the case $R=\C$. Recall the following 
notion used in \cite{nlg1}:

\

\begin{Definition}
A {\em Calabi-Yau supercategory of parity $\mu\in \Z_2$} is a pair
$(\cT,\tr)$, where:
\begin{enumerate}[A.]
\itemsep 0.0em
\item $\cT$ is a $\Z_2$-graded and $\C$-linear Hom-finite category.
\item $\tr=(\tr_a)_{a\in \Ob \cT}$ is a family of $\C$-linear maps
  $\tr_a:\End_\cT(a)\rightarrow \C$ of $\Z_2$-degree $\mu$
\end{enumerate} 
such that the following conditions are satisfied:
\begin{enumerate}[1.]
\itemsep 0.0em
\item For any two objects $a,b\in \Ob\cT$, the $\C$-bilinear pairing
  $\langle \cdot , \cdot \rangle_{a,b}:\Hom_\cT(a,b)\times
  \Hom_\cT(b,a)\rightarrow \C$ defined through:
\be
\langle t_1,t_2\rangle_{a,b}\eqdef \tr_b (t_1\circ t_2)~,~~\forall t_1\in \Hom_\cT(a,b)~,~\forall t_2\in \Hom_\cT(b,a)
\ee
is non-degenerate.
\item For any two objects $a,b\in \Ob\cT$ and any $\Z_2$-homogeneous
  elements $t_1\in \Hom_\cT(a,b)$ and $t_2\in \Hom_\cT(b,a)$, we
  have:
\ben
\label{tcyc}
\langle t_1,t_2\rangle_{a,b}=(-1)^{\deg t_1\,\deg t_2}\langle t_2,t_1\rangle_{b,a}~~.
\een
\end{enumerate}
\end{Definition}

\

\noindent We are interested in the case of Calabi-Yau supercategories
which admit a shift functor compatible with the traces $\tr_a$.

\

\begin{Definition}
\label{def:compat}
A Calabi-Yau supercategory of parity $\mu\in\Z_2$ {\em with compatible
  shift functor} is a triplet $(\cT,\tr,\Sigma)$ such that:
\begin{enumerate}
\itemsep 0.0em
\item $(\cT,\tr)$ is a Calabi-Yau supercategory of parity $\mu$.
\item $\Sigma$ is a parity change functor on the $\Z_2$-graded $\C$-linear category $\cT$.
\item We have: 
\be
\tr_{\Sigma(a)}(\Sigma(t))=\tr_\Sigma(t)~,~~\forall a\in \Ob\cT~,~\forall t\in \End_\cT(a)~~.
\ee
\end{enumerate} 
\end{Definition}

\subsection{Serre functors}

\noindent Recall the notion of Serre functor introduced by Bondal
and Kapranov \cite{BK}\footnote{Notice that we do not require $\cT$ to
  be an additive category. Also notice that we do not require $\cT$ to be triangulated.}:

\

\begin{Definition} 
A {\em Serre functor} on a Hom-finite $\C$-linear category $\cC$ is a linear
autoequivalence $S$ of $\cC$ such that for any two objects $a,b$ of
$\cC$, there exists a linear isomorphism: 
\be 
\Hom_{\cC}(a, S(b))\simeq \Hom_{\cC}(b,a)^\vee
\ee 
which is natural in both $a$ and $b$. More precisely, there exists an isomorphism of functors: 
\be
\Hom_{\cC}(\id_\cC\times S)\simeq \rD\circ \Hom_{\cC}\circ \tau~~,
\ee
where $\tau:\cT\times \cT\rightarrow \cT\times \cT$ is the
transposition functor and $\rD:\vect_\C\rightarrow \vect_\C$ is the
dualization functor on the category $\vect_\C$ of finite-dimensional vector spaces. 
\end{Definition}

\

\noindent One has the following equivalent description:

\

\begin{Proposition}
\label{prop:Serre}
Let $\cC$ be a Hom-finite $\C$-linear category and let $S$ be a linear
automorphism of $\cC$. Then the following statements are equivalent:
\begin{enumerate}[(a)]
\itemsep 0.0em
\item $S$ is a Serre functor for $\cC$. 
\item  For any object $a$ of $\cC$, there exists a linear map
  $\tr_a:\Hom_{\cC}(a,S(a))\rightarrow \C$ such that the following
  conditions are satisfied for any objects $a$ and $b$ of $\cC$:
\begin{itemize}
\item $\tr_a(g\circ f)=\tr_b(S(f)\circ g)~,~\forall f\in \Hom_\cC(a,b)~,~\forall g\in \Hom_{\cC}(b,S(a))$~.
\item The bilinear map $\langle \cdot, \cdot
  \rangle_{a,b}^S:\Hom_\cC(a,b)\times \Hom_{\cC}(b,S(a))\rightarrow \C$
  defined through:
\be
\langle f,g\rangle_{a,b}^S\eqdef \tr_b(S(f)\circ g)~,~\forall f\in \Hom_\cC(a,b)~,~\forall g\in \Hom_{\cC}(b,S(a))
\ee
is non-degenerate.
\end{itemize}
\end{enumerate}
\end{Proposition}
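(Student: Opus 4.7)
The plan is to prove the equivalence (a) $\Leftrightarrow$ (b) by constructing explicit mutual inverses between the data of a natural isomorphism $\eta_{a,b}:\Hom_\cC(a,S(b))\xrightarrow{\sim}\Hom_\cC(b,a)^\vee$ (statement (a)) and the family of traces $(\tr_a)_{a\in \Ob\cC}$ satisfying the cyclicity and non-degeneracy conditions (statement (b)). The key observation is that both sets of data are determined by the same underlying bilinear pairings $\Hom_\cC(a,S(b))\times \Hom_\cC(b,a)\to \C$, and the two conditions in (b) correspond respectively to naturality of $\eta$ and to $\eta$ being an isomorphism.

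For the direction (a) $\Rightarrow$ (b), I will set
\be
\tr_a(\phi)\eqdef \eta_{a,a}(\phi)(\id_a)~,~~\forall \phi\in \Hom_\cC(a,S(a))~.
\ee
To verify the cyclicity $\tr_a(g\circ f)=\tr_b(S(f)\circ g)$ for $f\in \Hom_\cC(a,b)$ and $g\in \Hom_\cC(b,S(a))$, I will apply naturality of $\eta$ in the first variable (precomposition with $f$) to rewrite the left-hand side as $\eta_{b,a}(g)(f)$, and naturality of $\eta$ in the second variable (postcomposition with $S(f)$) to rewrite the right-hand side also as $\eta_{b,a}(g)(f)$. The non-degeneracy of $\langle \cdot,\cdot \rangle^S_{a,b}$ then follows because the pairing is identified via the formula $\langle f,g\rangle^S_{a,b}=\eta_{b,a}(g)(f)$ with the evaluation pairing associated to the isomorphism $\eta_{b,a}:\Hom_\cC(b,S(a))\xrightarrow{\sim}\Hom_\cC(a,b)^\vee$.

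For the converse direction (b) $\Rightarrow$ (a), I will define
\be
\eta_{a,b}(\phi)(f)\eqdef \tr_b(\phi\circ f)~,~~\forall \phi\in \Hom_\cC(a,S(b))~,~\forall f\in \Hom_\cC(b,a)~.
\ee
Applying the cyclicity identity in (b) gives $\tr_b(\phi\circ f)=\tr_a(S(f)\circ \phi)=\langle f,\phi\rangle^S_{b,a}$, so bijectivity of $\eta_{a,b}$ is equivalent to non-degeneracy of $\langle\cdot,\cdot\rangle^S_{b,a}$, which holds by assumption and the fact that all Hom-spaces are finite-dimensional. Naturality of $\eta$ in both variables then reduces to two applications of associativity of composition together with cyclicity. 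Finally, one checks that the two constructions are inverse to each other, so they set up the desired equivalence.

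The main technical care needed is bookkeeping of the contravariant/covariant structure on $\cC^{op}\times \cC$ when writing down the two naturality squares for $\eta$; once the squares are set up correctly, verification of the identities is essentially formal. I do not anticipate a substantive obstacle, as this is a Hom-finite, ungraded analogue of the Bondal--Kapranov equivalence that pairs a Serre functor with a ``Serre trace'' in the sense of Costello and Kontsevich--Soibelman, adapted to the setting where $S$ is a genuine automorphism (as assumed at the start of the appendix).
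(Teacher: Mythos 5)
Your argument is correct and complete in outline: the paper itself states this proposition without proof (it is the standard Bondal--Kapranov-type dictionary between a Serre functor and a cyclic non-degenerate trace), and your construction $\tr_a(\phi)=\eta_{a,a}(\phi)(\id_a)$ with inverse $\eta_{a,b}(\phi)(f)=\tr_b(\phi\circ f)$ is exactly the intended one, with the two naturality squares and the cyclicity identity matching up as you describe. The only minor imprecision is in the converse direction, where naturality in the contravariant variable needs only associativity while naturality in the covariant variable is the step that actually uses cyclicity; this does not affect the validity of the proof.
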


\subsection{Calabi-Yau categories with involution}

\

\

\noindent Given a $\C$-linear category $\cT$ with involution and an element $\mu\in \Z_2$, we define: 
\be
\Sigma^\mu\eqdef\twopartdef{\id_{\cT^\0~~}}{\mu=\0 ~~ ,}{\Sigma~~}{\mu=\1 ~~ .}
\ee

\

\begin{Definition}
Let $\mu\in \Z_2$. Then a $\C$-linear category with involution
$(\cC,\Sigma)$ is called {\em $\mu$-Calabi-Yau} if $\Sigma^\mu$ is a
Serre functor for $\cC$.
\end{Definition}

\

\noindent For any $\mu\in \Z_2$, let $\mathrm{ICYCat}(\mu)$ denote the
full subcategory of $\mathrm{{\C}ICat}$ consisting of all
$\mu$-Calabi-Yau categories with involution and
$\mathrm{SCYCat^s}(\mu)$ denote the full subcategory of
$\mathrm{{\C}SCat^s}$ consisting of all Calabi-Yau supercategories of
signature $\mu$. The proof of the following statement is immediate:

\

\begin{Proposition}
\label{prop:SerreFunctor}
The restrictions of the functors $\Gr$ and $\Ev$ give mutually
quasi-inverse equivalences between the categories
$\mathrm{ICYCat}(\mu)$ and $\mathrm{SCYCat^s}$.
\end{Proposition}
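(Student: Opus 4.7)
The plan is to leverage the equivalence $(\Gr,\Ev)$ already established between $\mathrm{RICat}$ and $\mathrm{RSCat}^s$ (for $R=\C$), and to verify that it restricts correctly to the Calabi-Yau subcategories on each side. Since the underlying categorical equivalence is already in hand, the essential content is to match the two extra pieces of data: a Serre functor on $\cC$ of the form $\Sigma^\mu$ on one side, and a non-degenerate graded-symmetric trace $\tr$ of parity $\mu$ compatible with $\Sigma$ on the other. Concretely, I would carry out the two directions separately and then invoke the ambient equivalence.

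First I would show that $\Gr$ sends $\mathrm{ICYCat}(\mu)$ into $\mathrm{SCYCat^s}(\mu)$. Given $(\cC,\Sigma)$ with $\Sigma^\mu$ a Serre functor, Proposition \ref{prop:Serre} supplies a family of linear maps $\tr_a^S:\Hom_{\cC}(a,\Sigma^\mu(a))\to \C$ witnessing the Serre duality. In the supercompletion $\Gr_\Sigma(\cC)$ we have $\End_{\Gr_\Sigma(\cC)}^\mu(a)=\Hom_\cC(a,\Sigma^\mu(a))$, so $\tr_a^S$ is automatically a $\C$-linear map of $\Z_2$-degree $\mu$ on endomorphisms. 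Non-degeneracy of the pairing $\langle\cdot,\cdot\rangle_{a,b}$ on $\Hom_{\Gr_\Sigma(\cC)}(a,b)\times \Hom_{\Gr_\Sigma(\cC)}(b,a)$ decomposes into its $(\kappa_1,\kappa_2)$-homogeneous components (with $\kappa_1+\kappa_2=\mu$), each of which reduces to the non-degeneracy of $\langle\cdot,\cdot\rangle^S$ after unwinding the definition of composition in $\Gr_\Sigma(\cC)$ and using $\Sigma^2=\id_\cC$. The graded cyclicity \eqref{tcyc} then follows from the Serre cyclicity $\tr_a^S(g\circ f)=\tr_b^S(\Sigma^\mu(f)\circ g)$ together with the intertwining rule of Definition \ref{def:compat}, provided one carefully tracks signs arising from the parities $\deg t_1,\deg t_2\in\Z_2$.

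Next I would show that $\Ev$ sends $\mathrm{SCYCat^s}(\mu)$ into $\mathrm{ICYCat}(\mu)$. Given $(\cT,\tr,\Sigma)$ with $\tr$ of parity $\mu$ and $\Sigma$ compatible, the even subcategory $\cT^\0$ inherits $\Sigma$ as an involution. To check that $\Sigma^\mu$ is a Serre functor on $\cT^\0$, I verify condition (b) of Proposition \ref{prop:Serre} using the family $\tr_a|_{\Hom_{\cT^\0}(a,\Sigma^\mu(a))}$: non-degeneracy of the restricted pairing follows because the component $\Hom_\cT^\mu(a,b)\times\Hom_\cT^\mu(b,a)$ of the CY pairing coincides, via the identification $\Hom^\mu_\cT(\cdot,\cdot)=\Hom_{\cT^\0}(\cdot,\Sigma^\mu(\cdot))$, with $\langle\cdot,\cdot\rangle^S_{a,b}$; the Serre identity $\tr_a(g\circ f)=\tr_b(\Sigma^\mu(f)\circ g)$ is obtained by combining the trace-cyclicity of $\tr$ with the intertwining $\tr_{\Sigma(a)}\circ\Sigma=\tr_a$.

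Finally, once both $\Gr$ and $\Ev$ are shown to preserve the respective Calabi-Yau structures, the quasi-inverse equivalence between $\mathrm{ICYCat}(\mu)$ and $\mathrm{SCYCat^s}(\mu)$ is inherited from the corresponding equivalence between $\mathrm{RICat}$ and $\mathrm{RSCat}^s$: the natural isomorphisms $\Ev\circ\Gr\simeq\id$ and $\Gr\circ\Ev\simeq\id$ automatically intertwine the trace families because both are built from the same underlying pairing data under the identification $\Hom_{\Gr_\Sigma(\cC)}^\mu(a,a)=\Hom_\cC(a,\Sigma^\mu(a))$. The main obstacle I expect is bookkeeping of signs in the cyclicity identity: translating the $(-1)^{\deg t_1\deg t_2}$ super-cyclicity on the $\Gr$-side into the unsigned Serre cyclicity on the $\Ev$-side requires splitting morphisms into their $\Z_2$-homogeneous components and using $\Sigma^2=\id$ together with the compatibility $\tr_{\Sigma(a)}\circ\Sigma=\tr_a$, and one must check that the signs cancel consistently for all combinations of parities $(\kappa_1,\kappa_2)$ with $\kappa_1+\kappa_2=\mu$.
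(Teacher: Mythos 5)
Your proposal is correct and takes essentially the same route as the paper — in fact the paper gives no written proof at all (it declares the statement immediate from the preceding development), and your outline, which translates between Serre-functor data and trace data via Proposition \ref{prop:Serre}, uses the identification $\Hom^{\mu}_{\Gr_\Sigma(\cC)}(a,a)\simeq\Hom_{\cC}(a,\Sigma^{\mu}(a))$, and inherits the quasi-inverse equivalence from the ambient one between $\mathrm{RICat}$ and $\mathrm{RSCat}^s$, is precisely the intended argument. The sign bookkeeping you flag at the end is exactly the content the paper suppresses, and your plan for handling it (decomposing into $\Z_2$-homogeneous components and using $\Sigma^2=\id$ together with $\tr_{\Sigma(a)}\circ\Sigma=\tr_a$) is the right one.
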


\subsection{Shift functors on $\Z_2$-graded dg-categories}

\

\

\begin{Definition}
Let $R$ be a unital commutative ring and $\cA$ be $\Z_2$-graded and
$R$-linear dg-category. A {\em differential shift functor} on $\cA$ is
a shift functor $\Sigma$ on the underlying $R$-linear supercategory of
$\cA$ which satisfies the following condition for any objects $a$ and
$b$ of $\cA$:
\ben
\label{dSigma}
\Sigma_{a,b}\circ \dd_{a,b}=\dd_{\Sigma(a),\Sigma(b)}\circ \Sigma_{a,b}~: 
\Hom_{\cA}(a,b)\rightarrow \Hom_{\cA}(\Sigma(a),\Sigma(b)) ~~,
\een
where $\dd_{a,b}$ and $\dd_{\Sigma(a),\Sigma(b)}$ are the odd
differentials on the $R$-supermodules $\Hom_{\cA}(a,b)$ and 
respectively $\Hom_{\cA}(\Sigma(a),\Sigma(b))$.
\end{Definition}

\

\noindent A differential shift functor $\Sigma$ on $\cA$ induces a shift functor on
the total cohomology category $\rH(\cA)$, which we again denote by $\Sigma$.

\begin{acknowledgements}
This work was supported by the research grant IBS-R003-S1.
\end{acknowledgements}

\end{document}